\documentclass[review]{elsarticle}

\usepackage{lineno,hyperref}

\journal{Mathematics and Computers in Simulation}
\usepackage{amssymb}
\usepackage{amsmath}
\usepackage{amsthm}
\usepackage{epstopdf}
\usepackage{mathtools} 
\usepackage{subfig}
\usepackage{float}
\usepackage{mathrsfs}
\usepackage{upgreek} 
\newsubfloat{figure}
\numberwithin{figure}{section}
\newcommand{\be}{\begin{equation}}
\newcommand{\ee}{\end{equation}}
\newcommand{\bse}{\begin{subequations}}
\newcommand{\ese}{\end{subequations}}
\newcommand{\no}{\nonumber}
\newcommand{\ep}{\epsilon}

\newcommand{\fa}{\forall}

\def\ni{\noindent}
\newtheorem{theorem}{Theorem}[section]
\newtheorem{lemma}[theorem]{Lemma}

\newtheorem{example}[theorem]{Example}








\bibliographystyle{elsarticle-num}

\begin{document}

\begin{frontmatter}

\title{Complete Flux Scheme for Elliptic Singularly Perturbed Differential-Difference Equations}


\author[]{Sunil Kumar$^a$\corref{mycorrespondingauthor}}
\cortext[mycorrespondingauthor]{Corresponding author}
\ead{sunilmath2015@gmail.com}

\author[]{B.V. Rathish Kumar$^b$}
\ead{bvrk@iitk.ac.in}

\author[]{J.H.M. Ten Thije Boonkkamp$^c$}
\ead{j.h.m.tenthijeboonkkamp@tue.nl}

\address{$^a$Department of Mathematics \\ Deshbandhu College, Delhi University\\ New Delhi, India}
\address{$^b$Department of Mathematics and Statistics\\ Indian Institute of Technology Kanpur\\ Kanpur, India}
\address{$^c$Department of Mathematics and Computer Science\\ Eindhoven University of Technology\\ Eindhoven, Netherlands}

\begin{abstract}
In this study, we propose a new scheme named as complete flux scheme (CFS) based on the finite volume method for solving singularly perturbed differential-difference equations (SPDDEs) of elliptic type. An alternate integral representation for the flux is obtained which plays an important role in the derivation of CF scheme. We have established the stability, consistency and quadrature convergence of the proposed scheme. The scheme is successfully implemented on test problems.
\end{abstract}

\begin{keyword}
Singularly perturbed problems \sep Differential-difference equations \sep Finite volume methods \sep Flux \sep Integral representation of the flux.
\end{keyword}

\end{frontmatter}


\section{Introduction}
\label{intro}

Many conservation laws which occur frequently in fluid mechanics, combustion theory, plasma physics and semiconductor physics etc., are of advection-diffusion-reaction type (in particular singularly perturbed type) and describe the interplay between different processes such as advection or drift, diffusion or conduction and reaction or recombination, see \cite{pove05,jero96,mari90}. \emph{Singularly perturbed problems} (SPPs) are special type of differential equations which are different in nature of its solutions and complicated to solve by numerical methods \cite{rost96}. SPPs are those problems that show very rapid change in its solutions, i.e., such problems where some variables vary much faster than the other variables. In an SPP, a very small parameter $\ep$ called singular perturbation parameter, is multiplied to the highest order derivative term and as this parameter goes smaller and smaller, boundary layer occurs. Then the solution shows a very abrupt change in a very small portion of the domain. In such a small portion, it becomes challenging for the numerical methods to capture the solution accurately particularly in the layer region when the singular perturbation parameter tends to zero.

A \emph{differential-difference equation} (DDE) is a differential equation for which the evolution not only depends on the current state of the system but also on the past history \cite{liwd05}. In a DDE, a small positive parameter $\mu$ (say) is subtracted or added to one or more arguments of the unknown function $u$ or its derivatives. This parameter $\mu$ is said to be a negative shift, if it is subtracted while this parameter $\mu$ is said to be a positive shift if it is added. A differential equation is said be a \emph{singularly perturbed differential-difference equation} (SPDDE) if it has the characteristics of both SPPs and DDEs \cite{pna89}.
SPDDEs are not easy to solve by the usual numerical methods because usual methods are often not capable to capture boundary layers accurately. Thus, for solving these equations something more is needed like comparatively finer mesh or Shishkin mesh etc. There are various numerical methods available for space discretization like finite element, finite volume, finite difference and spectral methods etc.

Here, we present a new scheme in finite volume framework, namely \emph{complete flux scheme} (CFS) for solving the elliptic singularly perturbed differential-difference equations (SPDDEs). Finite volume methods (FVMs) are based on the integral formulation, i.e., an integration is performed on the conservation law over a disjunct set of control volumes that cover the domain. Complete flux schemes are based on the integral representations of the fluxes that play an important role to obtain numerical flux approximations. In complete flux scheme, the fluxes are computed by using the source term. For solving advection-diffusion-reaction type problems, the complete flux scheme was given by Ten Thije Boonkkamp and Anthonissen \cite{bnkmp11}. The complete flux scheme is an extension of the exponential schemes of Thiart \cite{gdth17,gdth18}. An integral representation for the flux from the solution of a local BVP has been obtained for the entire equation. The two components of the flux, namely homogeneous and inhomogeneous correspond to the homogeneous and the particular solution of the BVP, respectively. The idea of representing the solution in two adjacent intervals in terms of an approximate Green's function \cite{kwm96}, is used to obtain the inhomogeneous flux. Suitable quadrature rules when applied to the integral representation of flux lead to complete flux schemes.

The complete flux scheme is second order accurate in space, in particular, the flux approximations remain second order accurate for highly dominant advection and do not produce spurious oscillations for dominant advection. Also, the flux approximations only depend on neighbouring values resulting in a scheme limited to local neighbourhood and thereby avoids need for higher resolution. Also, the source terms are included in the computation of the fluxes to ensure conservation law at a discrete level. From the current literature on CFS, one can easily find that apart from the recent contribution \cite{bnkmp11,likmp13,hof98} of Ten Thije Boonkkamp and a few of his co-authors, there are hardly any reportings related to the work based on CFS. Hence, here a very first attempt has been made to explore the effectiveness of CFS in capturing the boundary layers associated with the elliptic SPDDEs for the first time.

The paper is organized under seven sections. Introductory remarks on complete flux scheme for elliptic SPDDEs are made in Section \ref{intro}. A detailed description of complete flux scheme is presented in Section \ref{sec:5.3}. In Sections \ref{sec:5.4}-\ref{sec:5.6}, stability, consistency and convergence is established, respectively. Further in Section \ref{sec:5.7}, we have successfully implemented complete flux scheme on some example problems. In the last Section \ref{sec:5.8}, conclusions have been given.

\section{The Continuous Problem}
\label{sec:5.3}
Consider the following BVP for the elliptic SPDDE
\bse \label{eq:5.3.1}
\begin{align}
- \ep\phi^{\prime\prime}(x) + b\phi^{\prime}(x-\mu)= s, ~ \fa x \in\Omega, \hspace{2cm}& \label{eq:5.3.1a} \\
u(x) = q(x) \geq 0,~-\mu \leq x < 0,~~ \phi(1) = q(1) \geq 0,\hspace{0.8cm}& \label{eq:5.3.1b}
\end{align}
\ese
where $0 < \ep \ll 1$ is the singular perturbation parameter, $\mu $ is a small shift argument of $O(\ep)$ such that $\mu \geq 0$, $b$ is a constant on domain $\Omega = (0,1)$, $s$ is a source term and the prime $(^{\prime})$ is the differentiation with respect to $x$. The source term $s$ can be a constant or function of $x$ and $\phi$. Here, for the sake of discretization, we consider $s$ to be a function of $x$. 

For small $\mu$, the following BVP is a good approximation to (\ref{eq:5.3.1})
\bse \label{eq:5.3.111}
\begin{align}
\mathscr{L}{\phi} \coloneqq ~&- (\ep + \mu b) \phi^{\prime\prime} + b \phi^{\prime} = s, ~ \fa x \in\Omega, \label{eq:5.3.111a}\\
\phi(0) = &~ \phi_L \approx q(0), ~~ \phi(1)= \phi_R = q(1), \label{eq:5.3.111b}
\end{align}
\ese

\ni The eq. (\ref{eq:5.3.111a}) can be re-written as
\be \label{eq:5.3.2}
\big( b \phi - (\ep + \mu b) \phi^{\prime} \big)^{\prime} = s, ~ \fa x \in\Omega. \hspace{0.5cm}
\ee

\ni The flux corresponding to (\ref{eq:5.3.2}) is given by
\be \label{eq:5.3.3}
f = b \phi - (\ep + \mu b) \phi^{\prime}. \hspace{1.8cm}
\ee

\vspace{0.25cm}
\ni The Finite Volume-Complete Flux scheme is given as follows:

\vspace{0.25cm}
Perform uniform discretization on the domain $\Omega = (0,1)$ that leads to a uniform mesh. We assume the number of uniform mesh elements to be ${N-1}$ i.e., $N$ number of grid points, and thus ${N-1}$ number of interfaces. Therefore, we have
\begin{align*}
h =&~ \frac{1}{N-1}, \\
x_j =&~ (j-1) h,~ j = 1,2,...,N, \\
x_{j+{1/2}} =&~ \frac{1}{2} (x_j + x_{j+1}),~ j = 1,2,...,{N-1}
\end{align*}
where $h$ is the step size, $x_j$ grid points and $x_{j+{1/2}}$ interfaces. Also, we assume that $\Omega_j = (x_{j-{1/2}},x_{j+{1/2}})$ is the control volume. Now from (\ref{eq:5.3.2}) and (\ref{eq:5.3.3}), we have
\begin{align*}
f^{\prime} =~ s, ~ \forall x \in\Omega.
\end{align*}
Integrating on $\Omega_j$, we get
\begin{align*}
\int_{\Omega_j}&f^{\prime}\textrm{d}x = \int_{\Omega_j}s \hspace{0.04cm} \textrm{d}x,\hspace{2cm} \\
\Rightarrow~~f(x_{j+{1/2}})& - f(x_{j-{1/2}}) = \int_{\Omega_j}s \hspace{0.04cm} \textrm{d}x
\end{align*}
By Midpoint rule, the FVM reads for the above equation
\be \label{eq:5.3.4}
F_{j+{1/2}} - F_{j-{1/2}} = s_j \hspace{0.02cm} h, \hspace{1cm}
\ee
where $F_{j+{1/2}}$ and $F_{j-{1/2}}$ are the numerical fluxes at the cell interfaces $x_{j+{1/2}}$ and $x_{j-{1/2}}$, respectively and $s_j = s(x_j)$.

\vspace{0.5cm}
\ni\textbf{CF-scheme:}\\
\indent The derivation of the numerical flux $F_{j+{1/2}}$ is based on the following model BVP:
\bse \label{eq:5.3.5}
\begin{align}
& f^{\prime} = \big(b \phi - (\ep + \mu b) \phi^{\prime}\big)^{\prime} = ~s, ~~ x_j < x < x_{j+1}, \label{eq:5.3.5a}\\
& \phi(x_j) =  \phi_j, ~~ \phi(x_{j+1})= ~\phi_{j+1}. \label{eq:5.3.5b}
\end{align}
\ese
Now, we define the variables $\lambda$, $P$, $\Lambda$ and $S$ as follows
\begin{align} \label{eq:5.3.6}
\lambda \coloneqq \frac{b}{\ep + \mu b}, ~ P \coloneqq \lambda h, ~ \Lambda(x) \coloneqq \int_{x_{j+{1/2}}}^x \lambda(\xi) \textrm{d}\xi, ~ S(x)\coloneqq \int_{x_{j+{1/2}}}^x {s}(\xi)\textrm{d}\xi,
\end{align}
with $h = x_{j+1} - x_j$ where $P$ and $\Lambda$ are called Pe\'{c}let number and Pe\'{c}let integral, respectively. Integrating (\ref{eq:5.3.5a}) from $x_{j+{1/2}}$ to any point $x\in(x_j,x_{j+1})$, we get the integral balance
\be \label{eq:5.3.7}
f(x) - f_{j+{1/2}} = S(x), \hspace{0.5cm}
\ee
where $f_{j+{1/2}} = f(x_{j+{1/2}})$. By using the definition of $\Lambda$, The flux $f$ can be re-written as
\be \label{eq:5.3.8}
f = - (\ep + \mu b)(\phi e^{-\Lambda})^{\prime} e^{\Lambda}. \hspace{0.5cm}
\ee

\ni Now, substituting (\ref{eq:5.3.8}) in (\ref{eq:5.3.7}) and integrating from $x_j$ to $x_{j+1}$, we get the following expressions for the flux $f_{j+{1/2}}$ :
\bse \label{eq:5.3.9}
\begin{align}
f_{j+{1/2}} =& ~~~~ f^\textrm{h}_{j+{1/2}} + f^\textrm{i}_{j+{1/2}}, \label{eq:5.3.9a}\\
f^\textrm{h}_{j+{1/2}} =&~~~ \left. (\phi_j e^{-\Lambda_j} - \phi_{j+1} e^{-\Lambda_{j+1}})\middle/ \int_{x_j}^{x_{j+1}} {(\ep + \mu b)}^{-1} e^{-\Lambda} \textrm{d}x \right., \label{eq:5.3.9b}\\
f^\textrm{i}_{j+{1/2}} =&~ -\left.\int_{x_j}^{x_{j+1}} {(\ep + \mu b)}^{-1} e^{-\Lambda} S \textrm{d}x \middle/ \int_{x_j}^{x_{j+1}} {(\ep + \mu b)}^{-1} e^{-\Lambda} \textrm{d}x  \right., \label{eq:5.3.9c}
\end{align}
\ese
where $f^\textrm{h}_{j+{1/2}}$ and $f^\textrm{i}_{j+{1/2}}$ are the homogeneous and inhomogeneous part corresponding to the homogeneous and particular solution of (\ref{eq:5.3.5}), respectively, and $\Lambda_j = \Lambda(x_j)$.

Here, $\Lambda(x) = \lambda(x - x_{j+{1/2}})$ as $b$ is constant, and if $s$ is also a constant on the interval $[x_j,x_{j+1}]$ then $S(x) = s(x - x_{j+{1/2}})$. Further, putting these expressions of $\lambda$ and $S$ in (\ref{eq:5.3.9b}) and (\ref{eq:5.3.9c}) and simplifying further, we get
\bse \label{eq:5.3.10}
\begin{align}
f^\textrm{h}_{j+{1/2}} =&~ \frac{\ep + \mu b}{h} \big[B(-P) \phi_j - B(P) \phi_{j+1}\big], \label{eq:5.3.10a}\\
f^\textrm{i}_{j+{1/2}} =&~ \Big(\frac{1}{2} - W(P)\Big) s\hspace{0.03cm}h,\label{eq:5.3.10b}
\end{align}
\ese
where the functions $B$ and $W$ are defined as follows
\begin{align*}
B(z) \coloneqq \frac{z}{e^z - 1} ~~~~\text{and}~~~~  W(z) \coloneqq \frac{e^z -1-z}{z(e^z -1)} = \frac{1}{z}\big(1-B(z)\big),
\end{align*}
and called Bernoulli function \cite{spnr87} and Weight function, respectively. It is clear that the inhomogeneous flux $f^\textrm{i}_{j+{1/2}}$ is of importance when $\lvert P\rvert\ \gg 1$. These functions satisfy the following properties
\begin{align*}
B(-z) = z + B(z),~~0 \le W(z)\le 1, ~~\text{and}~~ W(-z) + W(z) = 1.
\end{align*}

\ni To show the dependency, we can write the homogeneous flux as follows
\begin{align} \label{eq:5.3.11}
f^\textrm{h}_{j+{1/2}} ~=&~ \mathscr{F}\big((\ep + \mu b)/h, P; \phi_j, \phi_{j+1}\big) \no \\
&=~ \alpha_{j+{1/2}}\big((\ep + \mu b)/h, P\big) \phi_j - \beta_{j+{1/2}}\big((\ep + \mu b)/h, P\big)\phi_{j+1}.
\end{align}

Also, we can generalise these fluxes for the case when $b$ and $s$ are variables, i.e., $b$ is a function of $x$ and $s$ is a function of $x$ or $\phi$, then $\lambda$ and $P$ will not be constants anymore. For this, we define the usual inner product as follows
\be \label{eq:5.3.12}
\langle a, b\rangle \coloneqq \int_{x_j}^{x_{j+1}} a(x)b(x)\textrm{d}x. \hspace{3.3cm}
\ee
Therefore, (\ref{eq:5.3.9}) becomes
\bse \label{eq:5.3.13}
\begin{align}
f_{j+{1/2}} =& ~~~~ f^\textrm{h}_{j+{1/2}} + f^\textrm{i}_{j+{1/2}}, \label{eq:5.3.13a}\\
f^\textrm{h}_{j+{1/2}} =& ~~~ \left. (\phi_j e^{-\Lambda_j} - \phi_{j+1} e^{-\Lambda_{j+1}})\middle/ \langle {(\ep + \mu b)}^{-1}, e^{-\Lambda}\rangle \right., \label{eq:5.3.13b}\\
f^\textrm{i}_{j+{1/2}} =& - \left. \langle {(\ep + \mu b)}^{-1} S, e^{-\Lambda}\rangle \middle/ \langle {(\ep + \mu b)}^{-1}, e^{-\Lambda}\rangle.  \right. \label{eq:5.3.13c}
\end{align}
\ese
Now, we also have the following relations
\begin{align*}
\Lambda_{j+{1/2}} =&~ \frac{1}{2}(\Lambda_j + \Lambda_{j+1}), ~~ \Lambda_{j+1} - \Lambda_j = \int_{x_j}^{x_{j+1}} \lambda(\xi) \textrm{d}\xi = \langle \lambda, 1\rangle, \\
\Lambda_j =&~  \frac{1}{2}(\Lambda_j + \Lambda_{j+1}) - \frac{1}{2}(\Lambda_{j+1} - \Lambda_j) = \Lambda_{j+{1/2}} - \langle \lambda, 1\rangle /2, \\
\Lambda_{j+1} =&~  \frac{1}{2}(\Lambda_j + \Lambda_{j+1}) + \frac{1}{2}(\Lambda_{j+1} - \Lambda_j) = \Lambda_{j+{1/2}} + \langle \lambda, 1\rangle /2.
\end{align*}
Therefore, on using above relations the homogeneous flux (\ref{eq:5.3.13b}) becomes
\be \label{eq:5.3.14}
f^\textrm{h}_{j+{1/2}} =~~ \left. e^{-\Lambda_{j+{1/2}}}(e^{\langle \lambda, 1\rangle /2}\phi_j - e^{-\langle \lambda, 1\rangle /2}\phi_{j+1})\middle/ \langle {(\ep + \mu b)}^{-1}, e^{-\Lambda}\rangle. \right.
\ee
Now, we have
\begin{align*}
\langle \lambda, e^{-\Lambda}\rangle =&~ \int_{x_j}^{x_{j+1}} \lambda e^{-\Lambda} \textrm{d}x = [-e^{-\Lambda}]_{x_j}^{x_{j+1}} = e^{-\Lambda_j} - e^{-\Lambda_{j+1}} \\
=&~~ e^{-\Lambda_{j+{1/2}}}(e^{\langle \lambda, 1\rangle /2} - e^{-\langle \lambda, 1\rangle /2}), \\
\Rightarrow ~~~~ e^{-\Lambda_{j+{1/2}}} =&~ \left. \langle \lambda, e^{-\Lambda}\rangle \middle/ (e^{\langle \lambda, 1\rangle /2} - e^{-\langle \lambda, 1\rangle /2}). \right.
\end{align*}
Putting the above value of $e^{-\Lambda_{j+{1/2}}}$ in (\ref{eq:5.3.14}) and on some further simplification, we get
\be \label{eq:5.3.15}
f^\textrm{h}_{j+{1/2}} =~~ \frac{\langle \lambda, e^{-\Lambda}\rangle / \langle \lambda, 1\rangle}{\langle {(\ep + \mu b)}^{-1}, e^{-\Lambda}\rangle} \big[ B(-\langle \lambda, 1\rangle)\phi_j - B(\langle \lambda, 1\rangle)\phi_{j+1} \big].
\ee
This can also be written as a modification of the constant coefficient homogeneous flux (\ref{eq:5.3.11}) as follows
\be \label{eq:5.3.16}
f^\textrm{h}_{j+{1/2}} = \mathscr{F}\left(\frac{\langle \lambda, e^{-\Lambda}\rangle / \langle \lambda, 1\rangle}{\langle {(\ep + \mu b)}^{-1}, e^{-\Lambda}\rangle}, \langle \lambda, 1\rangle; \phi_j, \phi_{j+1}\right).
\ee

\ni Now, we will simplify the numerator part of the inhomogeneous flux (\ref{eq:5.3.13c}).
\begin{align*}
\langle {(\ep + \mu b)}^{-1} S, e^{-\Lambda}\rangle = & \int_{x_j}^{x_{j+1}} {(\ep + \mu b)}^{-1} e^{-\Lambda}S \textrm{d}x \\
= \int_{x_j}^{x_{j+{1/2}}} &{(\ep + \mu b)}^{-1} e^{-\Lambda}S \textrm{d}x + \int_{x_{j+{1/2}}}^{x_{j+1}} {(\ep + \mu b)}^{-1} e^{-\Lambda}S \textrm{d}x \\
= \int_{x_j}^{x_{j+{1/2}}} &{(\ep + \mu b)}^{-1} e^{-\Lambda} {\int_{x_{j+{1/2}}}^x s(\xi)\textrm{d}\xi} \textrm{d}x \\
&\hspace{0.6cm}+ \int_{x_{j+{1/2}}}^{x_{j+1}} {(\ep + \mu b)}^{-1} e^{-\Lambda} {\int_{x_{j+{1/2}}}^x s(\xi)\textrm{d}\xi} \textrm{d}x .
\end{align*}
We define normalised coordinates as follows
\begin{align*}
\sigma \coloneqq&~ \frac{x - x_j}{h}, ~~ (0 \le \sigma \le 1), \\
x =&~ x_j + h \sigma, ~~ \textrm{d}x = h \textrm{d} \sigma,\\
\xi =&~ x_j + h \eta, ~~ \textrm{d}\xi = h \textrm{d} \eta.
\end{align*}
Then, we have
\begin{align*}
\langle {(\ep + \mu b)}^{-1} S, e^{-\Lambda}\rangle =&~ {h}^2 \int_{0}^{1/2} {(\ep + \mu b)}^{-1} e^{-\Lambda} {\int_{1/2}^{\sigma} s(\eta)\textrm{d}\eta} \textrm{d}\sigma \\
& \hspace{2cm}+ {h}^2 \int_{1/2}^{1} {(\ep + \mu b)}^{-1} e^{-\Lambda}{\int_{1/2}^{\sigma} s(\eta)\textrm{d}\eta} \textrm{d}\sigma \\
=&~ {h}^2 \int_{0}^{1/2} \int_{\eta}^{0} {(\ep + \mu b)}^{-1} e^{-\Lambda}\textrm{d}\sigma s(\eta)\textrm{d}\eta \\
& \hspace{2cm} + {h}^2 \int_{1/2}^{1} \int_{\eta}^{1} {(\ep + \mu b)}^{-1} e^{-\Lambda}\textrm{d}\sigma s(\eta)\textrm{d}\eta \\
& \hspace{2cm} \text{(on changing the order of integration)}
\end{align*}
Therefore, the inhomogeneous flux (\ref{eq:5.3.13c}) can be written as
\be \label{eq:5.3.17}
f^\textrm{i}_{j+{1/2}} = h \int_{0}^{1} G(\eta) s(\eta)\textrm{d}\eta, \hspace{2cm}
\ee
where $G(\eta)$ is the Green's function for the flux defined as
\begin{align} \label{eq:5.3.18}
G(\eta) = \begin{cases}
             {h} \int_{0}^{\eta} {(\ep + \mu b)}^{-1} e^{-\Lambda}\textrm{d}\sigma / \langle {(\ep + \mu b)}^{-1}, e^{-\Lambda}\rangle,~~~~ 0 \le \eta \le \frac{1}{2}, \\
             -{h} \int_{\eta}^{1} {(\ep + \mu b)}^{-1} e^{-\Lambda}\textrm{d}\sigma / \langle {(\ep + \mu b)}^{-1}, e^{-\Lambda}\rangle,~~ \frac{1}{2} \le \eta \le 1.
            \end{cases}
\end{align}
When $b = \text{Const} \ne 0$ is a constant, we have
\begin{align*}
{h} \int_{0}^{\eta} {(\ep + \mu b)}^{-1} e^{-\Lambda}\textrm{d}\sigma ~~=&~~ \int_{x_j}^{\xi} {(\ep + \mu b)}^{-1} e^{-\Lambda} \textrm{d}x = \frac{1}{b}\int_{x_j}^{\xi} \lambda e^{-\Lambda}\textrm{d}x \\
=&~~ \frac{1}{b} [-e^{-\Lambda}]_{x_j}^{\xi} =~~ \frac{1}{b} [e^{-\Lambda_j} - e^{-\Lambda(\xi)}],
\intertext{and}
\langle {(\ep + \mu b)}^{-1}, e^{-\Lambda}\rangle ~~=&~~ \frac{1}{b} \langle \lambda, e^{-\Lambda}\rangle \\
=&~~ \frac{1}{b} \int_{x_j}^{x_{j+1}} \lambda, e^{-\Lambda} \textrm{d}x = \frac{1}{b} [-e^{-\Lambda}]_{x_j}^{x_{j+1}} \\
=&~~ \frac{1}{b} [e^{-\Lambda_j} - e^{-\Lambda_{j+1}}].
\end{align*}
Now, for $0 \le \eta \le \frac{1}{2}$, we have
\begin{align*}
G(\eta) = \frac{e^{-\Lambda_j} - e^{-\Lambda(\xi)}}{e^{-\Lambda_j} - e^{-\Lambda_{j+1}}} = \frac{1-e^{\Lambda_j - \Lambda(\xi)}}{1-e^{-\langle \lambda, 1\rangle}}.
\end{align*}
Here, we have
\begin{align*}
\Lambda_j - \Lambda(\xi) =&~ \int_{x_{j+{1/2}}}^{x_j} \lambda(x)\textrm{d}x - \int_{x_{j+{1/2}}}^{\xi} \lambda(x)\textrm{d}x \\
=&~ - \int_{x_j}^{\xi} \lambda(x)\textrm{d}x = - \sigma \langle \lambda, 1\rangle
\end{align*}
\hspace{1cm}(where $\sigma(x) \coloneqq ~ \int_{x_j}^{x} \lambda(\xi)\textrm{d}\xi / \langle \lambda, 1\rangle$,~ a weighted normalised coordinate)

\ni Therefore, for the case for $0 \le \eta \le \frac{1}{2}$, we have
\begin{align*}
G(\sigma ; \langle \lambda, 1\rangle) = \frac{1-e^{-\langle \lambda, 1\rangle \sigma}}{1-e^{-\langle \lambda, 1\rangle}} = \frac{1-e^{-P \sigma}}{1-e^{-P}}, \big( \because \langle \lambda, 1\rangle = P,~\text{when}~ b = \text{Const} \ne 0 \big)
\end{align*}
Similarly, for the case for $\frac{1}{2} \le \eta \le 1$, we have
\begin{align*}
G(\sigma ; \langle \lambda, 1\rangle) = - \frac{1-e^{\langle \lambda, 1\rangle (1-\sigma)}}{1-e^{\langle \lambda, 1\rangle}} = - \frac{1-e^{P (1-\sigma)}}{1-e^{P}}.
\end{align*}
Then, the Green's function for the flux becomes (when $b = \text{Const} \ne 0$)
\begin{align} \label{eq:5.3.19}
G(\sigma ; P) = \begin{cases}
               ~~\frac{1-e^{-P \sigma}}{1-e^{-P}},~~~~~~~ 0 \le \sigma \le \frac{1}{2}, \\
               - \frac{1-e^{P (1-\sigma)}}{1-e^{P}},~~ \frac{1}{2} \le \sigma \le 1.
                \end{cases}
\end{align}

This Green's function $G$ for the flux is different from the usual Green's function as this Green's function $G$ relates the flux to the source term, while, the usual Green's function relates the solution to the source term. This Green's function $G$ is discontinuous at $\sigma = \frac{1}{2}$, corresponding to $x = x_{j+1/2}$, having jump $G(\frac{1}{2}-;P) - G(\frac{1}{2}+;P) = 1$. For more details see \cite{kwm96,bnkmp11}.

\vspace{0.2cm}
\ni Thus, the inhomogeneous flux (\ref{eq:5.3.17}) can be written as (when $b = \text{Const} \ne 0$)
\be \label{eq:5.3.20}
f^\textrm{i}_{j+{1/2}} = h \int_{0}^{1} G(\sigma ; \langle \lambda, 1\rangle) s(\sigma)\textrm{d}\sigma,
\ee
with $G(\sigma;P)$ the constant coefficient Green's function given in (\ref{eq:5.3.19}).

\vspace{0.2cm}
Now, we need to approximate $\langle \lambda, 1\rangle$, $\langle d, e^{-\Lambda}\rangle$, $(d = \lambda, {(\ep + \mu b)}^{-1})$ and the integral $\int_{0}^{1} G(\sigma ;P) s(\sigma) \textrm{d}\sigma$. For this purpose, we give quadrature rules for the inner products and an approximation for the integration in (\ref{eq:5.3.20}). We introduce the following
\bse \label{eq:5.3.21}
\begin{align}
&\langle \lambda, 1\rangle ~~~~ \approx ~~h \frac{1}{2}(\lambda_j + \lambda_{j+1}) \coloneqq h \bar{\lambda}_{j+{1/2}} = \bar{P}_{j+{1/2}}, \label{eq:5.3.21a}\\
&\hspace{7cm}~(\text{Trapezoidal Rule}) \no \\
&\frac{\langle d, e^{-\Lambda}\rangle}{\langle 1, e^{-\Lambda}\rangle} ~~~ \cong~~ \tilde{d}_{j+{1/2}} \coloneqq W(-\bar{P}_{j+{1/2}})d_j + W(\bar{P}_{j+{1/2}})d_{j+1}, \label{eq:5.3.21b}\\
&\hspace{7cm}~(\text{Weighted Average}) \no \\
&\int_{0}^{1} G(\sigma; \langle \lambda, 1\rangle) s(\sigma) \textrm{d}\sigma \coloneqq~ \int_{0}^{1} G(\sigma; \langle \lambda, 1\rangle) s_{b,j+{1/2}} \textrm{d}\sigma \no \\
& \hspace{3.5cm}= \Big(\frac{1}{2}- W(\bar{P}_{j+{1/2}})\Big) s_{b,j+{1/2}}, \label{eq:5.3.21c} \\
&s_{b,j+{1/2}} \coloneqq ~ \begin{cases} \label{eq:5.3.21d}
                                                     s_j,~~~~\text{if}~~\bar{b}_{j+{1/2}} \geq0,\\
                                                     s_{j+1},~\text{if}~~\bar{b}_{j+{1/2}} <0.
                                                     \end{cases}
\end{align}
\ese
where $\bar{\lambda}_{j+{1/2}}$ is the average, $\tilde{d}_{j+{1/2}}$ weighted average, $W$ weight function as defined before and
$s_{b,j+{1/2}}$ the upwind value of $s(\sigma)$. Therefore, from (\ref{eq:5.3.15}), we have
\begin{align*}
F^\textrm{h}_{j+{1/2}} =~ \frac{\langle \lambda, e^{-\Lambda}\rangle / \langle \lambda, 1\rangle}{\langle {(\ep + \mu b)}^{-1}, e^{-\Lambda}\rangle} \Big[ B(-\bar{P}_{j+{1/2}})\phi_j - B(\bar{P}_{j+{1/2}})\phi_{j+1} \Big]
\end{align*}

\ni Here, we see that
\begin{align*}
\frac{\langle \lambda, e^{-\Lambda}\rangle / \langle \lambda, 1\rangle}{\langle {(\ep + \mu b)}^{-1}, e^{-\Lambda}\rangle} =&~
\frac{\langle \lambda, e^{-\Lambda}\rangle / \langle 1, e^{-\Lambda}\rangle}{\langle {(\ep + \mu b)}^{-1}, e^{-\Lambda}\rangle /
\langle 1, e^{-\Lambda}\rangle}~ \frac{1}{\langle \lambda, 1\rangle} \\
=&~ \frac{\tilde{\lambda}_{j+{1/2}}}{\widetilde{(\ep + \mu b)^{-1}_{j+{1/2}}}}~ \frac{1}{h \bar{\lambda}_{j+{1/2}}} \\
=&~ \frac{1}{h}~ \frac{\tilde{\lambda}_{j+{1/2}}}{\bar{\lambda}_{j+{1/2}}}~ \widetilde{(\ep + \mu b)}_{j+{1/2}}.
\end{align*}
Thus, the homogeneous flux becomes
\begin{align} \label{eq:5.3.22}
F^\textrm{h}_{j+{1/2}} =&~ \frac{\varepsilon_{j+{1/2}}}{h} \left[ B(-\bar{P}_{j+{1/2}})\phi_j - B(\bar{P}_{j+{1/2}})\phi_{j+1} \right], \\
&\hspace{2cm} \text{ where }~~\varepsilon_{j+{1/2}} = \frac{\tilde{\lambda}_{j+{1/2}}}{\bar{\lambda}_{j+{1/2}}}~ \widetilde{(\ep + \mu b)}_{j+{1/2}}. \no
\end{align}

\ni Here, we notice that $\widetilde{(\ep + \mu b)}_{j+{1/2}} = \ep + \mu b $ when $\ep$, $\mu$ and $b$ are constants, also, then $\bar{\lambda}_{j+{1/2}} = \tilde{\lambda}_{j+{1/2}} = \lambda$ and $\varepsilon_{j+{1/2}} = \ep + \mu b $. Further, from (\ref{eq:5.3.20}), the inhomogeneous numerical flux becomes
\begin{align} \label{eq:5.3.23}
F^\textrm{i}_{j+{1/2}} = \Big(\frac{1}{2}- W(\bar{P}_{j+{1/2}})\Big) s_{b,j+{1/2}} h. \hspace{2cm}
\end{align}
Therefore, the final numerical flux \cite{hof98} at the cell interface $x_{j+{1/2}}$ is given by
\begin{align} \label{eq:5.3.24}
F_{j+{1/2}} =&~~ F^\textrm{h}_{j+{1/2}} + F^\textrm{i}_{j+{1/2}}, \no \\
\Rightarrow ~F_{j+{1/2}} =&~~ \frac{\varepsilon_{j+{1/2}}}{h} \big[ B(-\bar{P}_{j+{1/2}})\phi_j - B(\bar{P}_{j+{1/2}})\phi_{j+1} \big] \no \\
&\hspace{3cm}+ \Big(\frac{1}{2}- W(\bar{P}_{j+{1/2}})\Big) s_{b,j+{1/2}} h.
\end{align}
Now, we define some coefficients (see \cite{bnkmp11} by Ten Thije Boonkkamp) as follows
\bse \label{eq:5.3.25}
\begin{align}
\alpha_{j+{1/2}} \coloneqq&~ \frac{\varepsilon_{j+{1/2}}}{h} B(-\bar{P}_{j+{1/2}}),~~ \beta_{j+{1/2}} \coloneqq \frac{\varepsilon_{j+{1/2}}}{h} B(\bar{P}_{j+{1/2}}), \label{eq:5.3.25a} \\
\gamma_{j+{1/2}} \coloneqq&~ \max\Big(\frac{1}{2}- W(\bar{P}_{j+{1/2}}),0\Big),~ \delta_{j+{1/2}} \coloneqq \min\Big(\frac{1}{2}- W(\bar{P}_{j+{1/2}}),0\Big). \label{eq:5.3.25b}
\end{align}
\ese
Further, by using (\ref{eq:5.3.25}) in (\ref{eq:5.3.24}), the final numerical flux becomes
\begin{align} \label{eq:5.3.26}
F_{j+{1/2}} =&~\alpha_{j+{1/2}} \phi_j - \beta_{j+{1/2}} \phi_{j+1}+  h \left( \gamma_{j+{1/2}}s_j + \delta_{j+{1/2}}s_{j+1} \right).
\end{align}
Likewise, we have
\begin{align} \label{eq:5.3.27}
F_{j-{1/2}} =&~\alpha_{j-{1/2}} \phi_{j-1}- \beta_{j-{1/2}} \phi_j +  h \left( \gamma_{j-{1/2}}s_{j-1} + \delta_{j-{1/2}} s_j \right),
\end{align}
where, for $b>0$, $\delta_{j\pm{1/2}}= 0$, $\gamma_{j\pm{1/2}} \neq 0$, and for $b<0$, $\delta_{j\pm{1/2}} \neq 0$, $\gamma_{j\pm{1/2}}= 0$.

\ni Putting the values of $F_{j+{1/2}}$ and $F_{j-{1/2}}$ in (\ref{eq:5.3.4}), we get
\begin{align} \label{eq:5.3.28}
- \alpha_{j-{1/2}} \phi_{j-1} ~+&~ (\alpha_{j+{1/2}} + \beta_{j-{1/2}}) \phi_j - \beta_{j+{1/2}} \phi_{j+1} \no \\
=&~ h \big[\gamma_{j-{1/2}} s_{j-1} + (1 - \gamma_{j+{1/2}} + \delta_{j-{1/2}}) s_j - \delta_{j+{1/2}} s_{j+1}\big],
\end{align}
which is the \emph{complete flux scheme} (CFS). In the above equation (\ref{eq:5.3.28}), when $b$ is constant, all the coefficients will also be constants.

Note that the FV-CF scheme has a three-point coupling for both $\phi$ and $s$, resulting into the following linear system
\be \label{eq:5.3.29}
\boldsymbol{A} \boldsymbol{\phi} = \boldsymbol{B} \boldsymbol{s} + \boldsymbol{b},
\ee
where $\boldsymbol{A}$, $\boldsymbol{B} \in \mathbb{R}^{(N-1)\times(N-1)}$ are tridiagonal matrices, $\boldsymbol{\phi}$ and $\boldsymbol{s}$ the vectors of unknowns and source terms, respectively, and $\boldsymbol{b}$ a vector containing boundary data.

Now, we also consider the special case when $b = 0$, in this case, (\ref{eq:5.3.2}) takes the form $-(\ep \phi^\prime)^\prime = s$, and we have $\bar{P}_{j\pm{1/2}} = 0$, and consequently $F^\textrm{i}_{j+{1/2}}$ vanishes, leads us the second order central difference scheme
\begin{align} \label{eq:5.3.30}
- \frac{1}{h} \big[ \bar{\ep}_{j+{1/2}}(\phi_{j+1} - \phi_j) - \bar{\ep}_{j-{1/2}}(\phi_j - \phi_{j-1}) \big] =&~ s_j h.
\end{align}
where $\bar{\ep}_{j+{1/2}} = \bar{\ep}_{j-{1/2}} = \ep$, as $\ep$ is constant.

\section{Stability}
\label{sec:5.4}
The FV-CF scheme (\ref{eq:5.3.28}) can be written as follows
\be \label{eq:5.4.1}
\mathscr{L}^{h}\phi_j = \mathscr{W}^{h}s_j,~~j = 1,2,...,{N-1}, \hspace{0.8cm}
\ee
where the difference operator $\mathscr{L}^{h}$ and the weighting operator $\mathscr{W}^{h}$ are defined as follows
\be \label{eq:5.4.2}
\mathscr{L}^{h} \phi_j \coloneqq -a_{W,j} \phi_{j-1} + a_{C,j} \phi_j - a_{E,j} \phi_{j+1},
\ee
\be \label{eq:5.4.3}
\mathscr{W}^{h} s_j \coloneqq~ b_{W,j} s_{j-1}+ b_{C,j} s_j + b_{E,j} s_{j+1},~~
\ee
with coefficients defined as follows
\begin{align} \label{eq:5.4.4}
a_{W,j} &\coloneqq~ \frac{1}{h} \alpha_{j-{1/2}},~~ a_{E,j} \coloneqq~ \frac{1}{h} \beta_{j+{1/2}},~~ a_{C,j} \coloneqq~ \frac{1}{h} (\alpha_{j+{1/2}} + \beta_{j-{1/2}}), \no \\
b_{W,j} &\coloneqq~ \gamma_{j-{1/2}},~~ b_{E,j} \coloneqq -\delta_{j+{1/2}},~~ b_{C,j} \coloneqq~ 1- \gamma_{j+{1/2}}+ \delta_{j-{1/2}}.
\end{align}

Now, for the BVP (\ref{eq:5.3.1}), we assume $0 < \ep \ll 1$, $\mu > 0$ and $b>0$ are constants, and $s\in C^m[0,1]$.
Therefore, the coefficients defined in (\ref{eq:5.4.4}) are constants and take the forms as follows
\begin{align} \label{eq:5.4.5}
a_{W} &\coloneqq~ (\ep + \mu b) h^{-2} B(-P),~~ a_{E} \coloneqq~ (\ep + \mu b) h^{-2} B(P),~~ a_{C} \coloneqq~ a_{W} + a_{E}, \no \\
b_{W} &\coloneqq~ \frac{1}{2} - W(P),~~ b_{E} \coloneqq 0,~~ b_{C} \coloneqq~ \frac{1}{2} + W(P).
\end{align}

\ni Here, we see that all these coefficients are positive, and in system of equation (\ref{eq:5.3.29}), $\boldsymbol{A}$ is a tridiagonal matrix and $\boldsymbol{B}$ a lower bi-diagonal matrix. Further, we have assumed $C,~c,~c_1$ and $c_2$ to be positive constants, independent of $\ep,~\mu,~b$ and $h$, in the remainder of this paper.

\begin{lemma} \label{lemma:5.1}
The linear system (\ref{eq:5.3.29}) has a unique solution. If $\mathscr{L}^{h}\phi_j \leq \mathscr{L}^{h}\psi_j$, $j = 1,2,...,{N-1}$, and if $\phi_0 \leq \psi_0,~\phi_N \leq \psi_N$, then $\phi_j \leq \psi_j,j = 1,2,...,{N-1}$.
\end{lemma}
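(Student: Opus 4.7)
The plan is to establish both assertions from a single structural fact: the matrix $\boldsymbol{A}$ in (\ref{eq:5.3.29}) is a monotone tridiagonal M-matrix. First I would record positivity of the CFS coefficients by recalling that the Bernoulli function $B(z)=z/(e^z-1)$ is strictly positive for every real $z$ (with $B(0):=1$); since $\ep+\mu b>0$ and $h>0$, the constants $a_W$ and $a_E$ from (\ref{eq:5.4.5}) are strictly positive and $a_C=a_W+a_E$. Thus the difference operator $\mathscr{L}^h$ in (\ref{eq:5.4.2}) has positive centre coefficient and negative neighbour coefficients, and $\boldsymbol{A}$ has positive diagonal, non-positive off-diagonals, and zero row sums on all interior rows. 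The Dirichlet data at $j=0$ and $j=N$ are absorbed into the right-hand vector $\boldsymbol{b}$, so the first and last interior rows are strictly diagonally dominant. Combined with the irreducibility of the tridiagonal connectivity, $\boldsymbol{A}$ is irreducibly diagonally dominant, hence nonsingular, which yields existence and uniqueness of the solution to (\ref{eq:5.3.29}).

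For the comparison statement I would run the classical discrete maximum principle. Set $w_j := \psi_j - \phi_j$; the hypotheses become $\mathscr{L}^h w_j \geq 0$ for $j=1,\dots,N-1$ together with $w_0,w_N \geq 0$. Suppose the conclusion fails, and let $j^\star$ be an interior index at which $w$ attains its minimum $m := \min_j w_j < 0$. Using $a_C = a_W+a_E$ at $j^\star$,
\begin{align*}
\mathscr{L}^h w_{j^\star} \;=\; a_W\bigl(w_{j^\star}-w_{j^\star-1}\bigr) + a_E\bigl(w_{j^\star}-w_{j^\star+1}\bigr) \;\leq\; 0,
\end{align*}
since each bracket is $\leq 0$ by minimality and $a_W,a_E>0$. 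Combined with $\mathscr{L}^h w_{j^\star}\geq 0$, both brackets vanish, forcing $w_{j^\star-1}=w_{j^\star+1}=m$. Reapplying the same argument at the newly identified interior minimisers propagates equality all the way to the boundary, producing $m = w_0 \geq 0$ or $m = w_N \geq 0$, which contradicts $m<0$. Hence $w_j\geq 0$ for all $j$, i.e., $\phi_j\leq\psi_j$.

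The main obstacle is essentially bookkeeping: one must verify the sign pattern and dominance properties of the CFS coefficients via the Bernoulli function, and execute the propagation-to-the-boundary step carefully. Nothing deeper is needed because the standing assumptions ($\ep,\mu,b$ constant with $b>0$) collapse all $a$-coefficients to positive constants and make $a_C=a_W+a_E$ hold exactly, which is precisely the row-sum-zero structure required by the minimum-principle argument. The same M-matrix observation that drives the comparison principle is what simultaneously supplies the invertibility in the first assertion, so the two parts of the lemma fall out of one shared computation.
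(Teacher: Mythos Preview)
Your proposal is correct and follows essentially the same route as the paper: both establish that $\boldsymbol{A}$ is an irreducibly diagonally dominant $M$-matrix (via positivity of the Bernoulli function and the relation $a_C=a_W+a_E$), which yields invertibility and the discrete comparison principle. The only cosmetic difference is that the paper invokes $\boldsymbol{A}^{-1}\geq 0$ directly to obtain the comparison, whereas you carry out the equivalent propagation-of-the-minimum argument by hand; these are two standard presentations of the same monotonicity fact.
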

\begin{proof}
This lemma can be proved from the fact that $\boldsymbol{A} = (a_{ij})$ is an $M$-matrix. In fact we notice that $\boldsymbol{A}$
is irreducibly  diagonally dominant, i.e., $\boldsymbol{A}$ is irreducible, and $\lvert a_{ii}\rvert \geq \sum_{j\neq i} \lvert a_{ij}\rvert$ with strict inequality for at least one row. Thus, $\boldsymbol{A}$ has a positive inverse, i.e. $\boldsymbol{A}$ is
of monotone type. This shows that the system (\ref{eq:5.3.29}) has a unique solution.

Moreover, if $(\boldsymbol{A}\phi)_j = \mathscr{L}^{h}\psi_j,j = 1,2,...,{N-1}$; then the monotonicity of $\boldsymbol{A}$ implies the monotonicity of $\mathscr{L}^{h}$. Now, for the points next to the boundaries, e.g., $x_1$
\begin{align*}
\mathscr{L}^{h}\phi_1 \leq \mathscr{L}^{h}\psi_1 \Rightarrow ~~ &a_C(\phi_1 - \psi_1) - a_E(\phi_2 - \psi_2) \leq a_W(\phi_0 - \psi_0) \leq 0, \hspace{0.5cm}\\
\Rightarrow  ~~ &(\boldsymbol{A \phi})_1 \leq (\boldsymbol{A \psi})_1.
\end{align*}
\ni Likewise, we have
\begin{align*}
(\boldsymbol{A \phi})_{N-1} \leq (\boldsymbol{A \psi})_{N-1}.\hspace{2cm}
\end{align*}
Therefore, $\mathscr{L}^{h}\phi_j \leq \mathscr{L}^{h}\psi_j$ implies that $\phi_j \leq \psi_j,j = 1,2,...,{N-1}$ under the conditions
of the lemma. This proves the lemma.
\end{proof}

From the system (\ref{eq:5.3.29}), we have the following relation
\be \label{eq:5.4.6}
\boldsymbol{e} = \boldsymbol{A^{-1}} \boldsymbol{\uptau},
\ee
where $\boldsymbol{e}$ and $\boldsymbol{\uptau}$ are the discretization error and truncation error, respectively.

\ni If $\lVert \boldsymbol{A^{-1}} \rVert_{\infty}$ is bounded, then the CFS is stable.

\begin{lemma} \label{lemma:5.2}
\cite{kbnr03,likmp13} There exists a constant $C > 0$, such that
\begin{align} \label{eq:5.4.7}
\lVert \boldsymbol{A^{-1}} \rVert_{\infty} \leq - \frac{1}{b} \left( \frac{1}{J} \ln B(J) + W(J) \right) \leq C,
\end{align}

\ni where $J = b/ {(\ep + \mu b)}$. Thus, $\lVert \boldsymbol{A^{-1}} \rVert_{\infty}$ is bounded.
\end{lemma}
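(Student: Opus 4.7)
The plan is to combine the discrete comparison principle of Lemma~\ref{lemma:5.1} with an explicitly constructed barrier function. Because Lemma~\ref{lemma:5.1} makes $\boldsymbol{A}$ a monotone $M$-matrix, $\lVert \boldsymbol{A}^{-1}\rVert_\infty = \lVert \boldsymbol{A}^{-1}\mathbf{1}\rVert_\infty$, so it suffices to exhibit a non-negative grid function $\psi_j$ that satisfies $\mathscr{L}^h\psi_j \geq 1$ at interior nodes and vanishes at the boundary; then $\lVert \boldsymbol{A}^{-1}\rVert_\infty \leq \max_j \psi_j$. The natural choice is the exact solution $\tilde\psi$ of the continuous reduced problem
\[
-(\ep+\mu b)\tilde\psi''+b\tilde\psi'=1,\qquad \tilde\psi(0)=\tilde\psi(1)=0,
\]
which solves to
\[
\tilde\psi(x) = \frac{x}{b} + \frac{1-e^{Jx}}{b(e^J-1)},\qquad J=\frac{b}{\ep+\mu b}.
\]
Because $\tilde\psi'(0) = (1-B(J))/b > 0$ for $J>0$ and $\tilde\psi(0)=\tilde\psi(1)=0$, the function $\tilde\psi$ is non-negative on $[0,1]$ with a unique interior maximum.

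The central observation is that, for constant coefficients and a constant source, the CF scheme is \emph{exact}. The homogeneous flux formula (\ref{eq:5.3.10a}) arises from the exact exponential integration in (\ref{eq:5.3.7})--(\ref{eq:5.3.9}) (no quadrature is applied to the homogeneous term), and for $s\equiv 1$ the inhomogeneous integral collapses to the closed-form expression (\ref{eq:5.3.10b}). Consequently, evaluating $\tilde\psi$ on the grid gives $F_{j+1/2} = f(x_{j+1/2})$ identically, and the discrete balance $F_{j+1/2}-F_{j-1/2} = h$ agrees with the exact integrated conservation law. Hence the grid samples $\psi_j := \tilde\psi(x_j)$ satisfy $\mathscr{L}^h\psi_j = 1$ exactly, and Lemma~\ref{lemma:5.1} yields $\lVert \boldsymbol{A}^{-1}\rVert_\infty \leq \max_j \psi_j \leq \max_{x\in[0,1]}\tilde\psi(x)$.

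An elementary calculus step locates the maximum: $\tilde\psi'(x^*)=0$ forces $e^{Jx^*} = (e^J-1)/J = 1/B(J)$, so $x^* = -J^{-1}\ln B(J) \in (0,1)$. Substituting back and using $W(z) = (e^z-1-z)/(z(e^z-1))$ gives
\[
\tilde\psi(x^*) = -\frac{1}{b}\!\left(\frac{1}{J}\ln B(J) + W(J)\right),
\]
which is precisely the middle expression in (\ref{eq:5.4.7}). The uniform upper bound follows from the pointwise estimate $\tilde\psi(x)\leq x/b\leq 1/b$ on $[0,1]$, since the exponential term is non-positive whenever $J>0$; thus one may take $C=1/b$, independent of $\ep$, $\mu$ and $h$.

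The delicate step I expect to be the main obstacle is the exactness claim in the second paragraph: one must carefully verify that \emph{both} the Bernoulli representation of $f^{\mathrm h}_{j+1/2}$ and the weighted-mean representation of $f^{\mathrm i}_{j+1/2}$ reproduce the continuous flux of $\tilde\psi$ with no quadrature defect, so that $\mathscr{L}^h\psi_j = 1$ holds with equality rather than up to an $O(h)$ perturbation. It is this special algebraic structure of the complete flux scheme that lets the discrete bound coincide with the continuous one and closes the argument.
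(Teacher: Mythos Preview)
Your argument is correct. The paper itself does not prove Lemma~\ref{lemma:5.2}; it merely states the bound and defers to the references \cite{kbnr03,likmp13}, so there is no in-paper proof to compare against directly.

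Your barrier-function strategy is precisely the one used in the cited literature. A few comments on the execution: the identity $\lVert \boldsymbol{A}^{-1}\rVert_\infty=\lVert \boldsymbol{A}^{-1}\mathbf{1}\rVert_\infty$ is justified by the nonnegativity of $\boldsymbol{A}^{-1}$ established in Lemma~\ref{lemma:5.1}; the exactness of the CF scheme for constant $b$ and constant $s$ is genuine and follows because (i) the flux formulae (\ref{eq:5.3.10a})--(\ref{eq:5.3.10b}) are obtained by exact integration of the local BVP in this case, (ii) the quadrature rules (\ref{eq:5.3.21}) reduce to identities for constant data (as noted after (\ref{eq:5.3.22})), and (iii) the midpoint rule in (\ref{eq:5.3.4}) is exact for constant $s$. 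The check $b_W+b_C+b_E=1$ from (\ref{eq:5.4.5}) then gives $\mathscr{W}^h 1=1$, so your grid function $\psi_j=\tilde\psi(x_j)$ indeed satisfies $\mathscr{L}^h\psi_j=1$ with zero boundary values, hence $\boldsymbol{A}^{-1}\mathbf{1}=(\psi_1,\dots,\psi_{N-1})^T$ exactly. Your computation of the interior maximum $\tilde\psi(x^*)=-b^{-1}\bigl(J^{-1}\ln B(J)+W(J)\bigr)$ and the crude bound $\tilde\psi(x)\le x/b\le 1/b$ are both correct. The ``delicate step'' you flag is therefore not an obstacle at all in the constant-coefficient setting assumed in Section~\ref{sec:5.4}; the exactness is a design feature of the scheme, not an approximation.
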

Thus, $\lVert \boldsymbol{A^{-1}} \rVert_{\infty}$ bounded implies that CFS is stable.

\section{Consistency}
\label{sec:5.5}
The truncation error for the CFS is defined as follows
\be \label{eq:5.5.1}
\uptau_j \coloneqq~ \mathscr{L}^{h}\phi(x_j) - \mathscr{W}^{h}(\mathscr{L}\phi)(x_j),~~j = 1,2,...,{N-1}.
\ee
We find the expression for $\uptau_j$ when $h \leq \ep + \mu b $ and $h \geq \ep + \mu b $. For the same, we use the Taylor expansion
\bse \label{eq:5.5.2}
\begin{align}
f(x_2) ~~=&~~ \sum_{n=0}^q \frac{f^{(n)}(x_1)}{n!} (x_2-x_1)^n + R_q(x_1,x_2;f), \label{eq:5.5.2a}\\
\intertext{where, $R_q(x_1,x_2;f)$ is the remainder term, given by}
R_q(x_1,x_2;f) =&~ \frac{1}{q!} \int_{x_1}^{x_2} (x_2-x)^q f^{(q+1)}(x) \textrm{d}x, \label{eq:5.5.2b}
\end{align}
\ese
with $f(x)$ to be smooth enough.

\vspace{0.5cm}
\ni \textbf{Case 1}: When $h \leq \ep + \mu b $

\vspace{0.25cm}
Using the Taylor expansion (\ref{eq:5.5.2a}) up to the fourth derivative of $\phi$, from (\ref{eq:5.5.1}) we have
\begin{align} \label{eq:5.5.3}
\uptau_j =&~ T_3 \phi^{\prime\prime\prime}(x_j) + I_1 + I_2 + I_3 + I_4, \\
\intertext{where}
T_3 =&~ \frac{b h^2}{6} - \Big(\frac{1}{2} - W(P)\Big) \Big((\ep + \mu b) h + \frac{b h^2}{2}\Big), \no \\
I_1 =& - (\ep + \mu b) h^{-2} B(-P) R_3(x_j,x_{j}-h; \phi), \no \\
I_2 =& - (\ep + \mu b) h^{-2} B(P) R_3(x_j,x_{j}+h; \phi), \no \\
I_3 =& - b \Big(\frac{1}{2} - W(P)\Big) R_2(x_j,x_{j}-h; \phi^{\prime}), \no \\
I_4 =&~ (\ep + \mu b) \Big(\frac{1}{2} - W(P)\Big) R_1(x_j,x_{j}-h; \phi^{\prime\prime}). \no
\end{align}
as $T_1$ and $T_2$, coefficients of $\phi^{\prime}$ and $\phi^{\prime\prime}$, respectively, vanish.

\vspace{0.5cm}
\ni \textbf{Case 2}: When $h \geq \ep + \mu b $

\vspace{0.25cm}
In this case, $ \ep + \mu b $ contributes one order in the estimation, so that it suffices to use the Taylor expansion (\ref{eq:5.5.2}) up to the third derivative of $\phi$. Therefore, we have
\begin{align} \label{eq:5.5.4}
\uptau_j =&~ I_1 + I_2 + I_3 + I_4, \\
\intertext{where}
I_1 =& - (\ep + \mu b) h^{-2} B(-P) R_2(x_j,x_{j}-h; \phi), \no \\
I_2 =& - (\ep + \mu b) h^{-2} B(P) R_2(x_j,x_{j}+h; \phi), \no \\
I_3 =& - b \Big(\frac{1}{2} - W(P)\Big) R_1(x_j,x_{j}-h; \phi^{\prime}), \no \\
I_4 =&~ (\ep + \mu b) \Big(\frac{1}{2} - W(P)\Big) R_0(x_j,x_{j}-h; \phi^{\prime\prime}). \no
\end{align}

Now, if the derivatives of the solution $\phi(x)$ are uniformly bounded, then from (\ref{eq:5.5.4}) and (\ref{eq:5.5.5}), we can directly establish the following lemma.
\begin{lemma} \label{lemma:5.3}
Let $\phi(x)$ be the solution of (\ref{eq:5.3.1}). If the first four derivatives of $\phi(x)$ are uniformly bounded, then we have
\begin{align} \label{eq:5.5.5}
\lvert \uptau_j \rvert \leq C h^2.
\end{align}
\end{lemma}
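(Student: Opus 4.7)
The plan is to establish $|\uptau_j| \leq C h^2$ by treating the two cases separately and bounding every term in the expressions (\ref{eq:5.5.3}) and (\ref{eq:5.5.4}) uniformly in $\ep$, $\mu$, $b$, and $h$. The central observation is that under the hypothesis of bounded derivatives, the integral remainders satisfy $|R_q(x_j, x_j \pm h; f)| \leq C h^{q+1}$ for $f \in \{\phi, \phi', \phi''\}$, so the work reduces to obtaining uniform bounds on the prefactors $(\ep+\mu b) h^{-2} B(\pm P)$, $b(\tfrac{1}{2}-W(P))$, and $(\ep + \mu b)(\tfrac{1}{2}-W(P))$ that multiply each $R_q$, together with a careful estimate of the leading coefficient $T_3$ in Case~1.

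For Case~1, where $P = bh/(\ep + \mu b) \leq b$ is bounded, I would Taylor-expand the weight function around $P=0$ to get $W(P) = \tfrac{1}{2} - P/12 + O(P^2)$, hence $\tfrac{1}{2}-W(P) = P/12 + O(P^2)$. Substituting into the definition of $T_3$, the leading $bh^2/6$ partially cancels with $(P/12)(\ep+\mu b)h = bh^2/12$, yielding $T_3 = bh^2/12 + O\!\left(P^2(\ep+\mu b)h + P\,b h^2\right)$. Using $P^2(\ep+\mu b)h = b^2 h^3/(\ep+\mu b) \leq b^2 h^2$ and $P\,bh^2 \leq b^2 h^2$ in this regime, I obtain $T_3 = O(h^2)$. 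Since $B(\pm P)$, $W(P)$, and $\tfrac{1}{2}-W(P)$ are all bounded on a bounded $P$-interval, and $\ep + \mu b$ is itself a bounded quantity, each of $I_1,\dots,I_4$ is immediately $O(h^2)$ (or better) once the remainders are bounded by $Ch^{q+1}$.

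For Case~2, where $P$ may be arbitrarily large, the analysis is more delicate, and the key uniform estimates are $0 < B(P) \leq 1$ for $P \geq 0$, $B(-P) = P + B(P) \leq 1+P$, and $|\tfrac{1}{2}-W(P)| \leq \tfrac{1}{2}$ for all $P \in \mathbb{R}$ (from the property $0 \leq W \leq 1$ stated in the paper). These give $(\ep+\mu b)h^{-2} B(-P) \leq C\!\left(bh^{-1} + (\ep+\mu b)h^{-2}\right)$; combined with $R_2 = O(h^3)$ and the hypothesis $\ep+\mu b \leq h$, this yields $|I_1| \leq C h^2$. The term $|I_2|$ is smaller because $B(P)\leq 1$, while $|I_3| \leq C h^2$ follows directly from the uniform bound on $\tfrac{1}{2}-W(P)$ and $R_1 = O(h^2)$. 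Finally, $|I_4| \leq C(\ep+\mu b)h \leq C h^2$ by the Case~2 hypothesis.

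The main obstacle is uniformity with respect to the singular perturbation parameter: one has to ensure that no bound blows up as $\ep \to 0$. The two-case split is precisely what makes this possible, because the partial cancellation giving $T_3 = O(h^2)$ is purely a small-$P$ phenomenon, whereas for large $P$ one must instead rely on the uniform boundedness of $B(P)$, $W(P)$, and the inequality $\ep+\mu b \leq h$ to absorb the $(\ep+\mu b) h^{-2}$-type prefactors. After these uniform estimates are in place, the remainder of the argument is routine bookkeeping of powers of $h$.
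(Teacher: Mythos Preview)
Your proposal is correct and follows precisely the route the paper intends: the paper itself offers no detailed proof of this lemma, stating only that it ``can directly'' be established from the two truncation-error representations (\ref{eq:5.5.3}) and (\ref{eq:5.5.4}), and your argument supplies exactly those details. The ingredients you use --- the remainder bounds $|R_q|\le C h^{q+1}$, the small-$P$ expansion $\tfrac12-W(P)=P/12+O(P^2)$ to control $T_3$ in Case~1, and the uniform estimates $0<B(P)\le 1$, $B(-P)=P+B(P)$, $|\tfrac12-W(P)|\le\tfrac12$ together with $\ep+\mu b\le h$ in Case~2 --- are the same ones the paper subsequently deploys in its proof of Lemma~\ref{lemma:5.5}.
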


By using this lemma along with Lemma \ref{lemma:5.2}, the second-order convergence of CFS can be established. But when the derivatives are not bounded i.e., when an inner or boundary layer exists, then we need the following lemma to bound the derivatives.
\begin{lemma} \label{lemma:5.4}
\cite{kets} The solution $\phi(x)$ of (\ref{eq:5.3.1}) can be decomposed as
\bse \label{eq:5.5.6}
\begin{align}
\phi(x) =&~ r y(x) + z(x), \label{eq:5.5.6a}
\intertext{where $\lvert r \rvert \leq c_1$ and $y(x) = \exp(-b{(\ep + \mu b)}^{-1}(1-x))$, and}
\lvert z^{(i)}(x)\rvert \leq &~ c_2 \Big( 1+ {(\ep + \mu b)}^{-i+1}\exp(-b{(\ep + \mu b)}^{-1}(1-x)) \Big), \label{eq:5.5.6b}
\end{align}
with $c_1 > 0$ and $c_2 >0$, independent of $(\ep + \mu b)$.
\ese
\end{lemma}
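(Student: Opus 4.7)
The plan is to construct the decomposition via a one-term outer expansion plus a single layer corrector at $x = 1$, then obtain the derivative bounds by inductively differentiating the equation.

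First, define $z_0$ as the solution of the reduced first-order problem $b z_0' = s$, $z_0(0) = \phi_L$. Because $b$ is a nonzero constant and $s \in C^m[0,1]$, $z_0$ is smooth with $|z_0^{(i)}| \leq C$ uniformly in $\ep$ and $\mu$. A direct computation gives $\mathscr{L} z_0 = s - (\ep+\mu b) z_0''$, and the layer function $y(x) = \exp(-b(\ep+\mu b)^{-1}(1-x))$ lies in $\ker \mathscr{L}$ (as follows from $y' = b(\ep+\mu b)^{-1} y$). Set $r := \phi_R - z_0(1)$, so that $|r| \leq c_1$ and $\tilde\phi := z_0 + r y$ matches $\phi$ at $x=1$ exactly, while $|\tilde\phi(0) - \phi_L| = |r|\, e^{-b(\ep+\mu b)^{-1}}$ is exponentially small. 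Setting $w := \phi - \tilde\phi$ and $z := z_0 + w$ yields the required form $\phi = z + r y$ with
\begin{equation*}
\mathscr{L} w = (\ep+\mu b) z_0'', \quad w(0) = -r\, e^{-b(\ep+\mu b)^{-1}},\quad w(1) = 0.
\end{equation*}

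Second, a maximum-principle argument for $\mathscr{L}$ (the continuous analogue of Lemma \ref{lemma:5.1}) yields $|w(x)| \leq c(\ep+\mu b)$ uniformly on $[0,1]$: the forcing has $L^\infty$ norm of size $O(\ep+\mu b)$ and the boundary data are either zero or exponentially small, so a barrier of the form $c_1(\ep+\mu b)\, x + c_2\, e^{-b(\ep+\mu b)^{-1}(1-x)}$ majorises $|w|$ by the comparison principle. This already establishes (\ref{eq:5.5.6b}) for $i = 0$, since $|z| \leq |z_0| + |w| \leq C + c(\ep+\mu b)$. For $i \geq 2$, the pointwise identity
\begin{equation*}
w^{(i)}(x) = (\ep+\mu b)^{-1}\bigl(b\, w^{(i-1)}(x) - (\ep+\mu b)\, z_0^{(i)}(x)\bigr),
\end{equation*}
obtained by differentiating the equation $i-2$ times, together with stretched-variable estimates for $w'$, yields inductively $|w^{(i)}(x)| \leq c\bigl(1 + (\ep+\mu b)^{-i+1} e^{-b(\ep+\mu b)^{-1}(1-x)}\bigr)$; the case $i=1$ follows from a separate barrier argument applied to $w'$. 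The prefactor $(\ep+\mu b)^{-i+1}$, one power less singular than for $y^{(i)}$, reflects that $w$ itself is $O(\ep+\mu b)$, so its layer derivatives are one order milder than those of the leading corrector $r y$.

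The principal obstacle is to carry out the induction in a way that preserves the distinction between the $O(1)$ smooth background and the $O((\ep+\mu b)^{-i+1})$ exponential contribution in $w^{(i)}$: a naive iterative use of the identity above would multiply both pieces of the inductive bound on $w^{(i-1)}$ by $(\ep+\mu b)^{-1}$, giving the suboptimal prefactor $(\ep+\mu b)^{-i}$ and a spurious $(\ep+\mu b)^{-1}$ outside the layer. The sharp estimate requires representing $w^{(i-1)}$ via the Green's function of $\mathscr{L}$ (or via the two explicit fundamental solutions $1$ and $y$), so that the exponential factor in the layer is extracted cleanly at every level and the smooth remainder is controlled independently. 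This bookkeeping is precisely what \cite{kets} carries out in full detail, and it is what we invoke here.
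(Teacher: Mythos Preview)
The paper does not actually prove this lemma: it is stated with a citation to \cite{kets} (Kellogg--Tsan) and used as a black box in the consistency analysis. So there is no ``paper's own proof'' to compare against; the relevant benchmark is the classical Kellogg--Tsan argument itself.

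Your sketch is exactly that argument: form the reduced (outer) solution $z_0$, peel off the boundary-layer corrector $r\,y$ to match the right endpoint, and show the remainder $w$ is $O(\ep+\mu b)$ by a barrier/maximum-principle estimate, then bootstrap derivative bounds. The construction, the BVP for $w$, and the identification of the principal difficulty---that naive iteration of $w^{(i)} = (\ep+\mu b)^{-1}\bigl(b\,w^{(i-1)} - (\ep+\mu b)\,z_0^{(i)}\bigr)$ loses a full power of $(\ep+\mu b)$ at every step---are all correct. The way Kellogg--Tsan avoid that loss is not by a Green's-function representation as you suggest, but by integrating the first-order ODE for $w^{(i-1)}$ (viewing $w^{(i)}$ as the unknown in $(\ep+\mu b)v' - bv = -(\ep+\mu b)z_0^{(i)}$) from $x$ to $1$ with the integrating factor $e^{-b(\ep+\mu b)^{-1}x}$, and then using the previously established bound on $w^{(i-1)}(1)$ as data. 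That integration is what produces the exponential localisation and keeps the smooth part $O(1)$. Your final paragraph gestures at this but misattributes the mechanism; otherwise the outline is sound and matches what the citation delivers.
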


From the above Lemma \ref{lemma:5.4}, the solution $\phi(x)$ of (\ref{eq:5.3.1}) can be decomposed into two terms. The first term $y(x)$ of this decomposition is the solution of the homogeneous equation of (\ref{eq:5.3.1}) that can be easily verified. Also, the CFS is exact for the constant coefficient homogeneous problem, i.e., from the first term, truncation error is zero. Then, the truncation error takes the from
\be \label{eq:5.5.7}
\uptau_j \coloneqq~ \mathscr{L}^{h}z(x_j) - \mathscr{W}^{h}(\mathscr{L}z)(x_j),~~j = 1,2,...,{N-1}.
\ee

Now, we estimate the terms in truncation error in both the cases, i.e., when $h \leq \ep + \mu b $ and $h \geq \ep + \mu b $. For the first case when $h \leq \ep + \mu b $, the truncation error is given by (\ref{eq:5.5.3}). Then we have
\begin{align*}
\lvert T_3 z^{\prime\prime\prime}(x_j) \rvert =&~ \left\lvert \Big[\frac{b h^2}{6} - \Big(\frac{1}{2} - W(P)\Big)\frac{b h^2}{2}
- \Big(\frac{1}{2} - W(P)\Big) (\ep + \mu b) h \Big] z^{\prime\prime\prime}(x_j) \right\rvert \\
\leq &~ \left\lvert \frac{b h^2}{6} - \Big(\frac{1}{2} - W(P)\Big)\frac{b h^2}{2}
- \Big(\frac{1}{2} - W(P)\Big) (\ep + \mu b) h \right\rvert \lvert z^{\prime\prime\prime}(x_j) \rvert \\
\leq &~ \Big[ \frac{b h^2}{6} + \Big(\frac{1}{2} - W(P)\Big)\frac{b h^2}{2}
+ \Big(\frac{1}{2} - W(P)\Big) (\ep + \mu b) h \Big] \lvert z^{\prime\prime\prime}(x_j) \rvert
\end{align*}
\begin{align*}
\leq &~ \Big[ \frac{b h^2}{6} + \frac{b h^2}{4} + \Big(\frac{1}{2} - W(P)\Big) \frac{b h^2}{P} \Big] \lvert z^{\prime\prime\prime}(x_j) \rvert \\
\leq &~ \Big[ \frac{b h^2}{6}+ \frac{b h^2}{4}+ \frac{1}{12}{b h^2}+ O(h^4) \Big] \lvert z^{\prime\prime\prime}(x_j) \rvert \\
\leq &~ C h^2 \big(1 + (\ep + \mu b)^{-2} y(x_j)\big) \hspace{3.2cm} \big( \text{using}~ (\ref{eq:5.5.6b}) \big)
\end{align*}

Now for the remainder terms, because of the similarity, only the estimation for $I_1 + I_2$ is presented, and for this estimation, the following relation is used
\be \label{eq:5.5.8}
\frac{(\ep + \mu b)}{2}\big(B(-P) + B(P)\big) = b h \Big(\frac{1}{2} - W(P)\Big) + (\ep + \mu b)
\ee
Therefore, we have
\begin{align*}
&~~\lvert I_1 + I_2 \rvert \\
&~\leq (\ep + \mu b) h^{-2} B(-P) \frac{1}{6} \left\lvert \int_{x_j}^{x_j- h} (x_j- h- t)^{3} z^{(4)}(t)\textrm{d}t \right\rvert \\
&~\hspace{3cm}+(\ep+\mu b) h^{-2} B(P)\frac{1}{6} \left\lvert \int_{x_j}^{x_j+ h} (x_j +h - t)^{3} z^{(4)}(t)\textrm{d}t \right\rvert \\
&~ \leq \left[\frac{(\ep + \mu b) h}{6}\left(B(-P) + B(P)\right)\right] \int_{x_j- h}^{x_j+ h} \lvert z^{(4)}(t) \rvert \textrm{d}t \\
&~ \leq \frac{h}{3}\left[b h \left(\frac{1}{2}- W(P)\right)+ (\ep+\mu b) \right] \int_{x_j- h}^{x_j+ h} \lvert z^{(4)}(t) \rvert \textrm{d}t \hspace{1.5cm} \big( \text{using}~ (\ref{eq:5.5.8}) \big) \\
&~ \leq C\big((\ep + \mu b) h + h^2\big) \int_{x_j- h}^{x_j+ h} \lvert z^{(4)}(t) \rvert \textrm{d}t \\
&~ \leq C\big((\ep + \mu b) h + h^2\big) \int_{x_j- h}^{x_j+ h} \big( 1+ {(\ep + \mu b)}^{-3}\exp(-b{(\ep + \mu b)}^{-1}(1-t)) \big) \textrm{d}t \\
&~ \leq C\big((\ep + \mu b) h + h^2\big) \Big[ h+ {(\ep + \mu b)}^{-2} b^{-1}\sinh(b h (\ep + \mu b)^{-1})\\
&~ \hspace{7cm}  \exp(-b (\ep + \mu b)^{-1}(1-x_j)) \Big]
\end{align*}
\begin{align*}
&~ \leq C\big((\ep + \mu b) h + h^2\big) \Big[ h+ {(\ep + \mu b)}^{-3} h \exp(-b (\ep + \mu b)^{-1}(1-x_j)) \Big] \\
&~ \hspace{6.5cm} \big( \because \sinh(t) \leq Ct~\text{for $t$ bounded.} \big) \\
&~ \leq C h^2 \Big[ 1+ {(\ep + \mu b)}^{-2} y(x_j) \Big]
\end{align*}
Similarly, the estimations for $I_3$ and $I_4$ can be obtained, and upper bounds for these estimations have the same forms.

Now, for the second case when $h \geq \ep + \mu b $, the truncation error is given by (\ref{eq:5.5.4}). Then we have
\begin{align*}
\lvert I_1 \rvert~\leq &~~(\ep+\mu b)h^{-2}B(-P)\frac{1}{2}\left\lvert \int_{x_j}^{x_j- h}(x_j- h- t)^{2}z^{(3)}(t)\textrm{d}t \right\rvert \\
\leq &~~ \frac{(\ep + \mu b)}{2} \big(B(P) + P \big) \int_{x_j-h}^{x_j} \lvert z^{(3)}(t) \rvert \textrm{d}t \\
& \hspace{6cm} \big(\because B(-P) = P+B(P)\big) \\
\leq &~~ C\big( (\ep + \mu b) B(P)+b h \big) \int_{x_j-h}^{x_j} \big(1+ (\ep + \mu b)^{-2} y(t) \big) \textrm{d}t \\
\leq &~~ C\big( (\ep + \mu b)+b h \big) \int_{x_j-h}^{x_j} \big(1+ (\ep + \mu b)^{-2} y(t) \big) \textrm{d}t \\
& \hspace{6cm} \big(\because B(P) < 1~ \text{when}~ b> 0 \big) \\
\leq &~~ C\big( (\ep + \mu b)+ h \big) \int_{x_j-h}^{x_j} \big(1+ (\ep + \mu b)^{-2} \exp(-b{(\ep + \mu b)}^{-1}(1-t)) \big) 
\end{align*}
\begin{align*}
\leq &~~ C\big( (\ep + \mu b)+ h \big) \Big[ h+ b^{-1} (\ep + \mu b)^{-1} y(x_j)\big(1-\exp(-b h (\ep + \mu b)^{-1}) \big)\Big] \\
\leq &~~ C\big( (\ep + \mu b)+ h \big) \big[ h+ b^{-1} (\ep + \mu b)^{-1} y(x_j)\big] \\
& \hspace{6.2cm}\big(\because \exp(-b h (\ep + \mu b)^{-1}) > 0 \big) \\
\leq &~~ C\big( h+ h \big) \big[ h+ b^{-1} (\ep + \mu b)^{-1} y(x_j)\big] \\
\leq &~~ C\big[ h^2+ h (\ep + \mu b)^{-1} y(x_j) \big] \\
\leq &~~ C\Big[ h^2+ \big(\frac{\ep + \mu b}{h}\big) \big(\frac{h}{\ep + \mu b}\big)^2 y(x_j) \Big] \\
\leq &~~ C\Big[ h^2+ (\ep + \mu b) h^{-1} \exp\big(h (\ep + \mu b)^{-1}\big) \exp\big(-b{(\ep + \mu b)}^{-1}(1-x_j)\big) \Big] \\
& \hspace{6cm}\big(\because~ t^{k} \leq C e^{t}~\text{for +ive integer}~k \big) \\
\leq &~~ C\Big[ h^2+ (\ep + \mu b) h^{-1} \exp\big(b h (\ep + \mu b)^{-1}\big) \exp\big(-b{(\ep + \mu b)}^{-1}(1-x_j)\big) \Big] \\ 
\leq &~~ C\Big[ h^2+ (\ep + \mu b) h^{-1} y(x_{j+1}) \Big]
\end{align*}

\ni Similarly, remaining terms can be estimated, and upper bounds for the estimations have the same forms. These results for the truncation error can be written as following lemma.

\begin{lemma} \label{lemma:5.5}
Let $\phi(x)$ be the solution of (\ref{eq:5.3.1}) and let its first four derivatives exist, then for the truncation error, we have
\bse \label{eq:5.5.9}
\begin{align}
\lvert \uptau_j \rvert \leq&~ C h^2 + C h^2 (\ep + \mu b)^{-2} \exp(-b (\ep + \mu b)^{-1}(1-x_j)),~h \leq \ep + \mu b \label{eq:5.5.9a}\\
\lvert \uptau_j \rvert \leq&~ C h^2 + C (\ep + \mu b) h^{-1}  \exp(-b (\ep + \mu b)^{-1}(1-x_{j+1})),~h \geq \ep + \mu b \label{eq:5.5.9b}
\end{align}
\ese
\end{lemma}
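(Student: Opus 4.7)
\noindent\textbf{Proof proposal for Lemma \ref{lemma:5.5}.}

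The plan is to reduce the estimate to the smooth component of the solution and then apply Taylor expansion in each of the two regimes $h\leq\ep+\mu b$ and $h\geq\ep+\mu b$. First I would invoke Lemma \ref{lemma:5.4} to write $\phi=ry+z$, and observe that $y(x)=\exp(-b(\ep+\mu b)^{-1}(1-x))$ is an exact solution of the homogeneous, constant-coefficient equation $\mathscr{L}y=0$. Since the CFS reproduces the exponential homogeneous solution exactly when the coefficients are constant, we get $\mathscr{L}^{h}y(x_j)=\mathscr{W}^{h}(\mathscr{L}y)(x_j)=0$, so the truncation error (\ref{eq:5.5.7}) depends only on the regular component $z$, whose derivatives satisfy the layer estimate (\ref{eq:5.5.6b}). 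Thereafter the task is to bound each piece $T_3 z'''(x_j),\,I_1,I_2,I_3,I_4$ in terms of $h$, $\ep+\mu b$ and $y(x_j)$.

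For Case~1 ($h\leq\ep+\mu b$, i.e., $P\lesssim 1$) I would apply the Taylor expansion (\ref{eq:5.5.2a}) up to the fourth derivative and collect the coefficients of $\phi',\phi'',\phi'''$. The coefficients $T_1$ and $T_2$ of $\phi'$ and $\phi''$ vanish by the exactness of the scheme on linears and the identity (\ref{eq:5.5.8}), while $T_3$ remains. Since $P\lesssim 1$, I can Taylor-expand $B(\pm P)=1\mp P/2+P^2/12+O(P^4)$ and $\tfrac12-W(P)=P/12+O(P^3)$; substituting these into $T_3$ one obtains $|T_3|\leq Ch^2$, and multiplying by $|z'''(x_j)|\leq c_2(1+(\ep+\mu b)^{-2}y(x_j))$ gives the desired bound. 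The remainder terms $I_1+I_2$ are handled by the displayed calculation in the text: bound $B(\pm P)$ using (\ref{eq:5.5.8}), pull out the integral of $|z^{(4)}|$, estimate it by (\ref{eq:5.5.6b}), and finally use $\sinh t\leq Ct$ for bounded $t$, which is legitimate precisely because $P=bh/(\ep+\mu b)\leq b$. The terms $I_3,I_4$ carry lower-order derivatives against smaller weights and yield the same form by the same maneuver.

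For Case~2 ($h\geq\ep+\mu b$, so $P\gtrsim 1$) the small factor $\ep+\mu b$ provides one order of accuracy on its own, and it is enough to expand (\ref{eq:5.5.2a}) to the third derivative. Here I would exploit $B(P)\leq 1$, $B(-P)=P+B(P)$ and $\tfrac12-W(P)=O(1)$ to bound the prefactors by $C((\ep+\mu b)+bh)\leq Ch$. After this, the integral of $|z^{(3)}|$ over $[x_{j-1},x_j]$ is evaluated directly: the non-layer part contributes $Ch$, while the layer part integrates to $b^{-1}(\ep+\mu b)^{-1}y(x_j)(1-e^{-P})\leq b^{-1}(\ep+\mu b)^{-1}y(x_j)$. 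Combining and using $t^k\leq C e^t$ to convert $(h/(\ep+\mu b))^2$ into $\exp(bh/(\ep+\mu b))$ shifts $y(x_j)$ to $y(x_{j+1})$ and yields (\ref{eq:5.5.9b}). The remaining pieces $I_2,I_3,I_4$ have the same structure and furnish the same upper bound.

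The main obstacle is the bookkeeping in Case~2: one must trade the growing factor $(\ep+\mu b)^{-1}$ coming from $|z^{(3)}|$ against the small factor $\ep+\mu b$ from the coefficients while simultaneously converting $y(x_j)$ into the slightly shifted $y(x_{j+1})$, which is exactly where the polynomial-vs-exponential estimate $t^k\leq Ce^t$ is indispensable. Everything else is controlled expansion of the Bernoulli and weight functions in the appropriate regime.
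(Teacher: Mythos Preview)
Your proposal is correct and follows essentially the same route as the paper: reduce to the regular component $z$ via Lemma~\ref{lemma:5.4} and the exactness of the CFS on the homogeneous exponential, then in each regime bound $T_3 z'''$ and the remainder integrals using (\ref{eq:5.5.8}), the identities $B(-P)=P+B(P)$, $B(P)<1$, and the elementary inequalities $\sinh t\le Ct$ (for $t$ bounded) and $t^{k}\le Ce^{t}$ exactly as in the text preceding the lemma. The only cosmetic difference is that you Taylor-expand $B(\pm P)$ and $\tfrac12-W(P)$ to bound $T_3$, whereas the paper bounds the three summands of $T_3$ more directly; the outcome is the same.
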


\section{Convergence}
\label{sec:5.6}
The CFS is uniformly second-order convergent for the problem (\ref{eq:5.3.1}). This statement follows from the following theorem.
\begin{theorem} \label{thm:5.1}
There exists a constant $C$, independent of $ \ep + \mu b $ and $h$, such that
\be \label{eq:5.6.1}
\lvert e_j \rvert \leq~ C h^2, \hspace{1.6cm}
\ee
for all $(\ep + \mu b) \in (0,1]$ and $b >0$.
\end{theorem}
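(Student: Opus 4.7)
The proof brings together the three preparatory lemmas via the error equation $A\boldsymbol{e} = \boldsymbol{\uptau}$ with homogeneous Dirichlet boundary values $e_0 = e_N = 0$. By the discrete maximum principle of Lemma~\ref{lemma:5.1}, any nonnegative mesh function $\psi_j$ with $\psi_0, \psi_N \geq 0$ and $\mathscr{L}^{h}\psi_j \geq |\uptau_j|$ pointwise dominates $|e_j|$; the whole task is therefore to construct such a $\psi$ of uniform size $O(h^2)$ using the truncation bounds of Lemma~\ref{lemma:5.5}.

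I would split $|\uptau_j|$ into a smooth part of size $Ch^2$ and a layer part proportional to $y(x_j) = \exp\bigl(-b(\ep + \mu b)^{-1}(1-x_j)\bigr)$ (with coefficient $Ch^2(\ep+\mu b)^{-2}$ in Case~1 and $C(\ep+\mu b)h^{-1}$ in Case~2), and address each with its own barrier. The smooth barrier is immediate: using $B(-P) - B(P) = P$ together with the definitions of $a_W, a_E$ in (\ref{eq:5.4.5}) gives $\mathscr{L}^{h} x_j = b$, so $\psi_j^{(1)} = (C/b)\, h^2\, x_j$ satisfies $\mathscr{L}^{h}\psi_j^{(1)} = Ch^2$ and is itself of size $O(h^2)$. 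For the layer barrier I would exploit the continuous identity $\mathscr{L}\bigl[(1-x)y(x)/b\bigr] = y(x)$, which follows by a one-line calculation using $-(\ep+\mu b)y'' + by' = 0$, and take $\psi_j^{(2)} = \widetilde{K}(1-x_j)y(x_j)$ with $\widetilde{K}$ adjusted to dominate the layer coefficient, after controlling the discrete-versus-continuous error of $\mathscr{L}^{h}$ on the smooth function $(1-x)y(x)$ via the mesh hypothesis of each case and the stability bound of Lemma~\ref{lemma:5.2}.

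The main obstacle I anticipate is obtaining a uniform $O(h^2)$ size for the layer barrier. Since $\max_{x\in[0,1]} (1-x)y(x) \sim (\ep+\mu b)/b$, the pointwise barrier above only delivers a magnitude of order $h^2/(\ep+\mu b)$ in Case~1, which is $O(h)$ rather than $O(h^2)$ under the assumption $h \leq \ep+\mu b$. To close this gap I would invoke the solution decomposition $\phi = ry + z$ of Lemma~\ref{lemma:5.4} at the very start: because CFS is exact on the constant-coefficient homogeneous solution $y$ (as noted right above equation~(\ref{eq:5.5.7})), only the component $z$ contributes to $\uptau$, and the sharper derivative bounds on $z$ allow one to shave off one power of $(\ep+\mu b)^{-1}$ throughout the Case~1 and Case~2 estimates. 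After this reduction the barrier construction of the previous paragraph applies cleanly in both regimes and assembles into the uniform bound $|e_j| \leq Ch^2$, completing the proof.
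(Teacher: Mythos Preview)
Your diagnosis of the obstacle is correct, but the proposed remedy is circular. The truncation bounds in Lemma~\ref{lemma:5.5} were \emph{already} derived by passing from $\phi$ to the component $z$ of Lemma~\ref{lemma:5.4}; the estimates (\ref{eq:5.5.9a})--(\ref{eq:5.5.9b}) are precisely the result of using the derivative bounds (\ref{eq:5.5.6b}) on $z$ in the remainder terms of (\ref{eq:5.5.3}) and (\ref{eq:5.5.4}). Invoking Lemma~\ref{lemma:5.4} a second time therefore shaves off nothing, and your barrier $\widetilde{K}(1-x_j)y(x_j)$ still leaves you with an $O(h)$ bound in Case~1 exactly as you computed.

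The paper closes the gap with two ingredients you are missing. First, the layer barrier is not $(1-x)y(x)$ but $\xi(x)=\exp(-\lambda(\ep+\mu b)^{-1}(1-x))$ with $\lambda$ \emph{strictly} below $b$; Lemma~\ref{lemma:5.7} then gives $\mathscr{L}^{h}\xi(x_j)\ge C(\ep+\mu b)^{-1}\xi(x_j)$ in Case~1 and $\ge Ch^{-1}\xi(x_j)$ in Case~2, i.e.\ the discrete operator gains a large factor that the continuous identity $\mathscr{L}[(1-x)y]=by$ cannot. This yields Theorem~\ref{thm:5.2}, which is still only $|e_j|\le Ch^2+Ch^2(\ep+\mu b)^{-1}\xi(x_j)$ in Case~1. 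Second---and this is the decisive step---the paper invokes the \emph{finer} decomposition of Lemma~\ref{lemma:5.8}, $\phi=A_0+B_0\,y+(\ep+\mu b)R_0$, in which the layer remainder carries an explicit prefactor $(\ep+\mu b)$. One applies Theorem~\ref{thm:5.2} to $R_0$ (which solves a problem of the same type) and then multiplies by $(\ep+\mu b)$; this kills the bad $(\ep+\mu b)^{-1}$ in Case~1 and turns the Case~2 term into $(\ep+\mu b)^2\le h^2$. The smooth part $A_0$ is handled by Lemma~\ref{lemma:5.3} and the stability Lemma~\ref{lemma:5.2}, and $B_0y$ is exact. Without Lemma~\ref{lemma:5.8} (or an equivalent device supplying an extra power of $\ep+\mu b$), your argument cannot reach uniform $O(h^2)$.
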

In order to prove this theorem, we have to do some preparations. We use the comparison approach \cite{kets,ajmb80,ajm81}. By this approach, we choose the comparison functions $\eta(x)= 1+x$ and $\xi(x)= \exp(-\lambda (\ep + \mu b)^{-1} (1-x))$ for some $\lambda >0$. We use the functions $\eta(x)$ and $\xi(x)$ to estimate the error, where $\phi(x)$ is well behaved and where it is not i.e., near the layer, respectively. Also, we use the lower bounds of $\mathscr{L}^{h}\eta(x_j)$ and $\mathscr{L}^{h}\xi(x_j)$ that is important here. For these lower bounds, we have two following lemmas.
\begin{lemma} \label{lemma:5.6}
There exists a constant $C$, independent of $ \ep + \mu b $ and $h$, such that
\begin{align} \label{eq:5.6.2}
\mathscr{L}^{h}\eta(x_j) \geq C, \hspace{2cm}
\end{align}
for all $ \ep + \mu b \in (0,1]$ and $b >0$.
\end{lemma}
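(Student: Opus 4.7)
The plan is to prove this by direct computation, exploiting the fact that $\eta(x)=1+x$ is linear and that in the constant-coefficient regime of the lemma the coefficients of $\mathscr{L}^h$ take the simple form given in (\ref{eq:5.4.5}). Since $x_{j-1}=x_j-h$ and $x_{j+1}=x_j+h$, writing out
\[
\mathscr{L}^h\eta(x_j)=-a_W(1+x_j-h)+(a_W+a_E)(1+x_j)-a_E(1+x_j+h)
\]
causes the $1$'s and the $x_j$'s to telescope completely, leaving only $h(a_W-a_E)$. This reduces the problem to estimating a single algebraic expression in the Bernoulli function.

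Next I would substitute the explicit formulas from (\ref{eq:5.4.5}) to obtain
\[
\mathscr{L}^h\eta(x_j)=(\ep+\mu b)\,h^{-1}\bigl(B(-P)-B(P)\bigr),
\]
and then apply the identity $B(-z)=z+B(z)$, which is stated explicitly in the paper immediately after the definitions of $B$ and $W$. This gives $B(-P)-B(P)=P=bh/(\ep+\mu b)$, so that
\[
\mathscr{L}^h\eta(x_j)=(\ep+\mu b)\,h^{-1}\cdot\frac{bh}{\ep+\mu b}=b.
\]
Since $b>0$ is a fixed constant independent of $\ep+\mu b$ and $h$, one may simply take $C=b$, which yields the required bound uniformly in $j=1,\dots,N-1$ and in the parameters $(\ep+\mu b)\in(0,1]$.

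There is essentially no serious obstacle: the argument is purely a telescoping calculation combined with one Bernoulli-function identity. The only mild point to be careful with is that the identity $B(-P)-B(P)=P$ must be applied in the correct direction (the paper states $B(-z)=z+B(z)$, which after rearrangement gives exactly what is needed) and that the cancellation of the $(\ep+\mu b)$ factors against $P$ is what makes the lower bound $\ep+\mu b$-independent. This latter cancellation is precisely why $\eta(x)=1+x$ is the right comparison function for the well-behaved (non-layer) part of the error analysis to follow in Theorem \ref{thm:5.1}.
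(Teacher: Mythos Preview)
Your computation is correct: the telescoping reduces $\mathscr{L}^h\eta(x_j)$ to $h(a_W-a_E)$, and the identity $B(-P)-B(P)=P$ together with $P=bh/(\ep+\mu b)$ gives $\mathscr{L}^h\eta(x_j)=b$, which is a positive constant independent of $\ep+\mu b$ and $h$. The paper itself omits the proof as ``straightforward,'' and your direct calculation is precisely the argument one would expect the authors to have in mind.
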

\begin{proof}
Proof of this lemma is straightforward, so we omit this.
\end{proof}

\begin{lemma} \label{lemma:5.7}
There exist constants $c_1$ and $c_2$ such that $h \leq c_1$ and $0 < \lambda \leq c_2$, and for some constant $C$, it holds
\bse \label{eq:5.6.3}
\begin{align}
\mathscr{L}^{h}\xi(x_j)&~ \geq C (\ep + \mu b)^{-1}\xi(x_j),~h \leq \ep + \mu b, \label{eq:5.6.3a} \hspace{4cm}\\
\mathscr{L}^{h}\xi(x_j)&~ \geq C h^{-1}\xi(x_j),~h \geq \ep + \mu b. \label{eq:5.6.3b} \hspace{4cm}
\end{align}
\ese
\end{lemma}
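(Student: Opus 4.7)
The plan is to evaluate $\mathscr{L}^{h}\xi(x_j)$ in closed form, reduce it to a manifestly positive ratio of hyperbolic functions, and then read off the two regimes separately.

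First I would exploit the exponential form of $\xi$: with $\theta \coloneqq \lambda h/(\ep+\mu b)$ we have $\xi(x_{j\pm 1})=e^{\pm\theta}\,\xi(x_j)$, so factoring $\xi_j$ out of $\mathscr{L}^{h}\xi_j=-a_{W}\xi_{j-1}+a_{C}\xi_j-a_{E}\xi_{j+1}$ and using $a_{C}=a_{W}+a_{E}$ gives
\[
\mathscr{L}^{h}\xi_j=\xi_j\bigl[a_{W}(1-e^{-\theta})-a_{E}(e^{\theta}-1)\bigr].
\]
Substituting $a_{W}=(\ep+\mu b)h^{-2}B(-P)$, $a_{E}=(\ep+\mu b)h^{-2}B(P)$ from (\ref{eq:5.4.5}) together with $B(-P)=P/(1-e^{-P})$, $B(P)=P/(e^{P}-1)$, and $P=bh/(\ep+\mu b)$, a common-denominator calculation yields
\[
\mathscr{L}^{h}\xi_j=\frac{b}{h}\,\xi_j\,F(P),\qquad F(P)\coloneqq\frac{\sinh P-\sinh\theta-\sinh(P-\theta)}{\cosh P-1},
\]
where $\theta=\rho P$ with $\rho\coloneqq\lambda/b$.

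Next I would establish positivity of $F$. The identity $\sinh(a+b)-\sinh a-\sinh b=\sinh a(\cosh b-1)+\sinh b(\cosh a-1)$ with $a=\theta$, $b=P-\theta$ shows the numerator is strictly positive whenever $0<\theta<P$, i.e., $0<\lambda<b$. I therefore take $c_2$ to be any constant in $(0,b)$, so that $\rho\in(0,c_2/b]$ stays bounded away from $1$.

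For the regime $h\leq\ep+\mu b$ we have $P\in(0,b]$. A Taylor expansion at the origin gives
\[
\sinh P-\sinh(\rho P)-\sinh((1-\rho)P)=\tfrac{1}{2}\rho(1-\rho)P^{3}+O(P^{5}),\qquad \cosh P-1=\tfrac{1}{2}P^{2}+O(P^{4}),
\]
so $F(P)/P$ extends continuously to $P=0$ with value $\rho(1-\rho)>0$ and is bounded below on $(0,b]$ by some $c>0$ uniformly in $\rho\in(0,c_2/b]$. Writing $(b/h)F(P)=\bigl(b^{2}/(\ep+\mu b)\bigr)\cdot F(P)/P$ then produces (\ref{eq:5.6.3a}). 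For the regime $h\geq\ep+\mu b$ we have $P\geq b$; as $P\to\infty$ the dominant terms satisfy $\sinh P,\cosh P\sim \tfrac{1}{2}e^{P}$, whereas $\sinh(\rho P)$ and $\sinh((1-\rho)P)$ are of strictly lower exponential order (since $\rho$ is bounded away from $0$ and $1$), hence $F(P)\to 1$. Combined with continuity and positivity this gives $F(P)\geq c>0$ on $[b,\infty)$, and multiplying by $b/h$ yields (\ref{eq:5.6.3b}).

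The main obstacle is the algebraic reduction to the compact form $F(P)$ and, within that, noticing that the small-$P$ expansion must be pushed to the cubic term to see the nonvanishing coefficient $\rho(1-\rho)/2$: the $P$ and $P^{2}$ contributions in the numerator cancel identically, and this cancellation is precisely what upgrades the Case 1 estimate from $O(1)$ to the required $(\ep+\mu b)^{-1}$. The bound $h\leq c_1$ is used only to keep the Taylor remainders and $\rho$-dependence uniform.
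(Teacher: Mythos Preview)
Your reduction to the closed form $\mathscr{L}^{h}\xi(x_j)=(b/h)\,\xi(x_j)\,F(P)$ and the subsequent small-$P$/large-$P$ analysis are correct, and the hyperbolic identity you invoke for positivity is the right tool. This is, however, a genuinely different route from the paper's. The paper instead stops at the multiplicative factorisation
\[
\mathscr{L}^{h}\xi(x_j)=a_{W}\,\xi(x_{j+1})\bigl(e^{-\theta}-a_{E}/a_{W}\bigr)\bigl(1-e^{-\theta}\bigr),
\]
notes that $a_{E}/a_{W}=e^{-P}$, and then estimates the three factors separately across the sub-regimes $h/(\ep+\mu b)\le c$, $h/(\ep+\mu b)\ge C$, and the intermediate range, following Lemma~3.6 of Berger--Solomon--Ciment. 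Your approach trades that case split for the algebra needed to reach $F(P)$, after which both estimates fall out of a single compactness-plus-asymptotics argument; it is more self-contained and makes the mechanism (the cubic-order cancellation producing the $(\ep+\mu b)^{-1}$ gain) transparent.

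One small correction: the claim that $F(P)/P$ is bounded below on $(0,b]$ \emph{uniformly in $\rho\in(0,c_{2}/b]$} is false, since the limiting value $\rho(1-\rho)$ collapses as $\rho\to 0$; later you tacitly assume $\rho$ is bounded away from $0$ as well. For the lemma as actually used (Theorem~\ref{thm:5.2} fixes a single $\lambda$), this is harmless: for any fixed $\rho\in(0,1)$ the compactness argument goes through and the constant $C$ may depend on $\lambda$. If a statement uniform in $\lambda$ were intended, you would need to restrict to $\lambda\in[\lambda_{0},c_{2}]$ with $\lambda_{0}>0$.
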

\begin{proof}
Following the Lemma $3.6$ in \cite{ajm81}, we get the expression
\begin{align*}
\mathscr{L}^{h}\xi(x_j)=&~ -a_W \exp(-\lambda (\ep + \mu b)^{-1} (1-x_{j-1})) \\
& \hspace{2cm} + (a_W+a_E) \exp(-\lambda (\ep + \mu b)^{-1} (1-x_j))  \\
&~ \hspace{3.5cm} - a_E \exp(-\lambda (\ep + \mu b)^{-1} (1-x_{j+1})) \\
=&~ a_W \xi(x_{j+1})\left(\exp(-\lambda h (\ep + \mu b)^{-1})- a_E/a_W \right) \\
& \hspace{5cm} \Big(1- \exp(-\lambda h (\ep + \mu b)^{-1}) \Big).
\end{align*}
On estimating the individual factors in the above expression for the three cases $(i)~ h/(\ep + \mu b) \leq c$, $(ii)~ h/(\ep + \mu b) \geq C$ and $(iii)~ c \leq h/(\ep + \mu b) \leq C$ (for appropriately chosen $c$ and $C$), the required results follow.
\end{proof}
\begin{theorem} \label{thm:5.2}
Let $\{\phi_j\}$ be the approximate solution of (\ref{eq:5.3.1}) by CFS. Then there is constant $C$, independent of $(\ep + \mu b)$ and $h$, such that
\bse \label{eq:5.6.4}
\begin{align}
\lvert \phi(x_j)-\phi_j\rvert \leq &~ C h^2 + C h^2 (\ep + \mu b)^{-1} \exp(-\lambda (\ep + \mu b)^{-1} (1-x_j)),~h \leq \ep + \mu b, \label{eq:5.6.4a}\\
\lvert \phi(x_j)-\phi_j\rvert \leq &~ C h^2 + C (\ep + \mu b) \exp(-\lambda (\ep + \mu b)^{-1} (1-x_{j+1})),~h \geq \ep + \mu b. \label{eq:5.6.4b}
\end{align}
\ese
\end{theorem}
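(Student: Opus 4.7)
The plan is to prove Theorem \ref{thm:5.2} by the standard barrier function / comparison approach, handling the two regimes $h\le \ep+\mu b$ and $h\ge \ep+\mu b$ separately and patching together the bounds produced by the comparison functions $\eta$ and $\xi$ from Lemmas \ref{lemma:5.6} and \ref{lemma:5.7}. Define the discrete error $e_j \coloneqq \phi(x_j)-\phi_j$. Since the CF scheme is consistent in the sense of (\ref{eq:5.5.1}) and since exact boundary values are used, we have $\mathscr{L}^{h} e_j = \uptau_j$ for $j=1,\ldots,N-1$ with $e_0=e_N=0$. The monotonicity statement in Lemma \ref{lemma:5.1} reduces the task to producing, for each $j$, a nonnegative mesh function $\psi_j$ satisfying $\mathscr{L}^{h}\psi_j \ge |\uptau_j|$ with $\psi_0,\psi_N \ge 0$; then applying the lemma to $\pm e_j \le \psi_j$ yields $|e_j|\le \psi_j$.

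For the first regime $h\le \ep+\mu b$, use Lemma \ref{lemma:5.5}(\ref{eq:5.5.9a}), which splits $|\uptau_j|$ into a smooth part of size $O(h^2)$ and a layer part of size $Ch^2(\ep+\mu b)^{-2}y(x_j)$. I would take the ansatz
\begin{equation*}
\psi_j \;=\; K_1 h^{2}\,\eta(x_j) \;+\; K_2 h^{2}(\ep+\mu b)^{-1}\,\xi(x_j),
\end{equation*}
with $\eta$ and $\xi$ as in Lemmas \ref{lemma:5.6}--\ref{lemma:5.7}. By Lemma \ref{lemma:5.6} the first term contributes at least $K_1 C h^{2}$ to $\mathscr{L}^{h}\psi_j$, dominating the smooth part of the truncation error for $K_1$ large. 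By the first bound of Lemma \ref{lemma:5.7}, the second term contributes at least $K_2 C h^{2}(\ep+\mu b)^{-2}\xi(x_j)$, which dominates the layer part of $|\uptau_j|$ once one notes that $\xi(x_j)\ge y(x_j)$ when $\lambda$ is chosen with $0<\lambda \le b$ (so that the slower-decaying exponential majorises the faster one). A large enough choice of $K_1,K_2$ independent of $\ep+\mu b$ and $h$ then yields (\ref{eq:5.6.4a}).

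For the second regime $h\ge \ep+\mu b$ I would repeat the procedure, this time using Lemma \ref{lemma:5.5}(\ref{eq:5.5.9b}) and the second bound of Lemma \ref{lemma:5.7}. The natural ansatz is
\begin{equation*}
\psi_j \;=\; K_1 h^{2}\,\eta(x_j) \;+\; K_2 (\ep+\mu b)\,\xi(x_j),
\end{equation*}
so that $\mathscr{L}^{h}\psi_j \ge K_1 C h^{2} + K_2 C (\ep+\mu b) h^{-1}\xi(x_j)$. To absorb the truncation bound written in terms of $y(x_{j+1})$, I would use a local comparison $y(x_{j+1}) \le \xi(x_j)\cdot\exp((\lambda-b)(\ep+\mu b)^{-1}(h-\cdot))$, which is uniformly bounded provided $\lambda\le b$; adjusting the constants absorbs the one-step shift and produces (\ref{eq:5.6.4b}). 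Boundary conditions are harmless because $\eta,\xi\ge 0$ on $[0,1]$.

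The only genuinely delicate point I foresee is the choice of $\lambda$ and the verification that the comparison $\xi \ge y$ (modulo a universal constant after the index shift in the second case) is compatible with the restriction $0<\lambda\le c_2$ required by Lemma \ref{lemma:5.7}. This forces $\lambda\in(0,\min(b,c_2)]$, which is an acceptable range and fixes $\lambda$ in the statement of Theorem \ref{thm:5.2}. Once $\lambda$ is fixed, all remaining steps reduce to comparing exponentials and choosing $K_1,K_2$; the desired uniform second-order bound (\ref{eq:5.6.1}) of Theorem \ref{thm:5.1} then follows immediately from Theorem \ref{thm:5.2} by observing that in both regimes the layer contribution $(\ep+\mu b)^{-1}\xi$ or $(\ep+\mu b)\xi$ is $O(1)$ uniformly in $\ep+\mu b$ after multiplication by $h^{2}$ (respectively, after using $(\ep+\mu b)\le h$).
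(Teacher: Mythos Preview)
Your proof of Theorem \ref{thm:5.2} is correct and follows exactly the paper's barrier-function approach: bound $|\uptau_j|$ via Lemma \ref{lemma:5.5}, dominate each piece by $\mathscr{L}^{h}$ applied to a combination of $\eta$ and $\xi$ using Lemmas \ref{lemma:5.6}--\ref{lemma:5.7} (with $\lambda\le b$ so that $\xi$ majorises $y$), and conclude by the discrete comparison principle of Lemma \ref{lemma:5.1}. Your treatment is in fact more explicit than the paper's, which disposes of the second regime with ``similarly, the other case can be proved''.

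One caveat, however, about your closing remark: Theorem \ref{thm:5.1} does \emph{not} follow immediately from Theorem \ref{thm:5.2} in the way you describe. In the regime $h\le\ep+\mu b$ the layer contribution $h^{2}(\ep+\mu b)^{-1}\xi(x_j)$ is not uniformly $O(h^{2})$ near $x=1$ (it is only $O(h)$ there, since $\xi\to 1$), and in the regime $h\ge\ep+\mu b$ the term $(\ep+\mu b)\xi(x_{j+1})$ is at best $O(h)$, not $O(h^{2})$. The paper closes this gap via the three-term decomposition of Lemma \ref{lemma:5.8}: the smooth part $A_0$ is handled by Lemma \ref{lemma:5.3}, the pure layer $B_0\exp(\cdot)$ is reproduced exactly by the scheme, and Theorem \ref{thm:5.2} is applied only to the remainder $R_0$, where the prefactor $(\ep+\mu b)$ in front of $R_0$ absorbs the loss of one power.
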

\begin{proof}
Case when $h \leq \ep + \mu b $. From (\ref{eq:5.5.9a}), by choosing a $\lambda \leq b$, we have
\begin{align*}
\lvert \uptau_j\rvert = &~ \lvert \mathscr{L}^{h}(\phi(x_j)-\phi_j)\rvert \\
\leq &~ C h^2 + C h^2 (\ep + \mu b)^{-2} \exp(-\lambda (\ep + \mu b)^{-1} (1-x_j)) \\
\leq &~ C h^2 \mathscr{L}^{h}\eta(x_j) + C h^2 (\ep + \mu b)^{-1} \mathscr{L}^{h}\xi(x_j)\\
=&~ \mathscr{L}^{h} \left[ C h^2 \eta(x_j) + C h^2 (\ep + \mu b)^{-1} \mathscr{L}^{h}\xi(x_j)\right].
\end{align*}
Then we see that (\ref{eq:5.6.4a}) follows from Lemma (\ref{lemma:5.1}). Similarly, the other case can be proved.
\end{proof}
\begin{lemma} \label{lemma:5.8}
The solution $\phi(x)$ of (\ref{eq:5.3.1}) can be written in the form
\be \label{eq:5.6.5}
\phi(x) = A_0(x) + B_0 \exp(-b (\ep + \mu b)^{-1} (1-x)) + (\ep + \mu b) R_0(x; \ep + \mu b), \hspace{1cm}
\ee
where the constant $B_0$ and the norm $A_0 \in C^{m+1}[0,1]$ depend on the boundary values of (\ref{eq:5.3.1}) and the integral of
$s \in C^m[0,1]$. The function $R_0(x)$ satisfies the following problem:
\begin{align} \label{eq:5.6.6}
-(\ep + \mu b) R^{\prime\prime}_0 + b R^{\prime}_0 = F_0(x),~R_0(0; \ep + \mu b) = \kappa_0(\ep + \mu b),~R_0(1;\ep + \mu b) = 0,
\end{align}
where $\kappa_0(\ep + \mu b)$ is bounded and $F_0 \in C^{m-1}[0,1]$.
\end{lemma}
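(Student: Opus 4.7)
The plan is to carry out the classical singular-perturbation decomposition for the convection-diffusion problem (\ref{eq:5.3.111}) into a regular (outer) part, an exponential boundary-layer corrector at $x=1$ (where the layer sits because $b>0$), and a uniformly $O(\ep+\mu b)$ remainder. First I would define $A_0(x)$ as the solution of the reduced first-order problem $bA_0'(x) = s(x)$ with $A_0(0) = \phi_L$, which is explicitly $A_0(x) = \phi_L + b^{-1}\int_0^x s(t)\,dt$. Because $s \in C^m[0,1]$, one integration yields $A_0 \in C^{m+1}[0,1]$ with bounds depending only on the boundary datum $\phi_L$ and the integral of $s$, matching the stated dependence. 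I would then set $B_0 := \phi_R - A_0(1)$, so that $A_0(1)+B_0 = \phi_R$. The critical algebraic fact is that $v(x) := e^{-b(\ep+\mu b)^{-1}(1-x)}$ lies in the kernel of $\mathscr{L}$ (direct check: $-(\ep+\mu b)v''$ and $bv'$ both produce $\pm b^2(\ep+\mu b)^{-1}e^{-b(\ep+\mu b)^{-1}(1-x)}$, which cancel), so the layer term contributes nothing to the interior residual.

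Second, I would \emph{define} the remainder by rearranging (\ref{eq:5.6.5}),
\begin{equation*}
R_0(x;\ep+\mu b) := (\ep+\mu b)^{-1}\bigl[\phi(x) - A_0(x) - B_0\,e^{-b(\ep+\mu b)^{-1}(1-x)}\bigr],
\end{equation*}
and verify (\ref{eq:5.6.6}) by direct substitution. Applying $\mathscr{L}$ to this identity, and using $\mathscr{L}\phi = s$, $bA_0' = s$ (so $\mathscr{L}A_0 = s - (\ep+\mu b)A_0''$), and $\mathscr{L}\bigl(B_0\,e^{-b(\ep+\mu b)^{-1}(1-x)}\bigr)=0$, yields $\mathscr{L}R_0 = A_0''$, whence I can take $F_0 := A_0'' \in C^{m-1}[0,1]$ as required. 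The boundary values then fall out immediately: $\phi(1)=\phi_R$ together with $A_0(1)+B_0=\phi_R$ gives $R_0(1;\ep+\mu b)=0$, while $\phi(0)=\phi_L=A_0(0)$ yields
\begin{equation*}
\kappa_0(\ep+\mu b) := R_0(0;\ep+\mu b) = -\frac{B_0}{\ep+\mu b}\,e^{-b/(\ep+\mu b)}.
\end{equation*}

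The only nontrivial point, and what I would call the main (modest) obstacle, is verifying uniform boundedness of $\kappa_0(\ep+\mu b)$ for $\ep+\mu b \in (0,1]$. Since $B_0$ is a fixed constant determined by $\phi_L$, $\phi_R$ and $\int_0^1 s$, it suffices to observe that $t \mapsto t^{-1}e^{-b/t}$ is continuous on $(0,1]$ and satisfies $\lim_{t\to 0^+} t^{-1}e^{-b/t} = 0$ (since the exponential dominates every polynomial in $1/t$), so $\kappa_0$ is indeed uniformly bounded. This completes the verification of all the claims, and the decomposition is unique given the normalizations $A_0(0)=\phi_L$ and $R_0(1;\ep+\mu b)=0$; notice also that this decomposition is fully consistent with, and slightly sharper than, the two-term layer decomposition already used in Lemma \ref{lemma:5.4}.
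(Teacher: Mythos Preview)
Your proof is correct. The paper itself omits the proof and merely cites Smith~\cite{smt75}; you have supplied precisely the standard multivariable/outer-plus-layer construction that reference develops, so your argument is essentially what the paper defers to. One minor remark: your identification $F_0 = A_0'' = b^{-1}s'$ makes the regularity $F_0 \in C^{m-1}[0,1]$ completely transparent, and your explicit formula for $\kappa_0$ with the elementary $t^{-1}e^{-b/t}\to 0$ check is exactly the right level of detail for the boundedness claim.
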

\begin{proof}
We omit the proof here. For the same, see \cite{smt75}.
\end{proof}

Now, after all this preparation, we can prove Theorem \ref{thm:5.1}.
\begin{proof}
From Lemma \ref{lemma:5.8}, we can say that the solution $\phi(x)$ of (\ref{eq:5.3.1}) has a three term decomposition (\ref{eq:5.6.5}) also. Therefore, Theorem \ref{thm:5.1} holds if the contribution of each term of this decomposition to the discretization error is uniformly $O(h^2)$. The derivatives of the first term $A_0$ are uniformly bounded, therefore, by Lemma \ref{lemma:5.3} and Lemma \ref{lemma:5.2}, it follows that the contribution of $A_0$ is bounded by $C h^2$. For the third term $(\ep + \mu b) R_0$, from Lemma \ref{lemma:5.8} along with Theorem \ref{thm:5.2}, it follows that the contribution of $(\ep + \mu b) R_0$ is uniformly $O(h^2)$ as the discretization error of $R_0$ from using the CFS has the estimation (\ref{eq:5.6.4a}) and (\ref{eq:5.6.4b}). The second term of (\ref{eq:5.6.5}) is the analytical solution of the homogeneous equation of (\ref{eq:5.3.1}), so its contribution to the discretization error is zero. This completes the proof.
\end{proof}

\section{Numerical Results}
\label{sec:5.7}
In this section, we implement the complete flux scheme (CFS) to some example problems. We give some plots for the solutions and the $\ep$-effect on the solutions. Also, we give plots for the error to show the convergence.

\begin{example} \label{exp:5.9}
Consider the following elliptic SPDDE with appropriate B.C.
\bse \label{eq:5.7.9}
\begin{align}
-\ep &\phi^{\prime\prime}(x) + b\phi^{\prime}(x-\mu)= s, ~ \fa x \in\Omega, \hspace{2cm} \label{eq:5.7.9a} \\
&\phi(0) = \phi_L,~~\phi(1)= \phi_R,  \label{eq:5.7.9b}
\end{align}
\ese
where $0 < \ep \ll 1$, $\mu =0.1 \ep$, $b = 1$, $s =0$, $\phi_L = 1$, $\phi_R = 0$ and $\Omega = (0,1)$.
The exact solution of the corresponding approximate SPP is given by
\begin{align*}
\phi(x) = \frac{1-e^{-(1-x)/(\ep+\mu)}}{1 -e^{-1/(\ep+\mu)}}.
\end{align*}
\end{example}

\begin{figure}
\subfloat[]{\includegraphics[width=6cm,height=6cm]{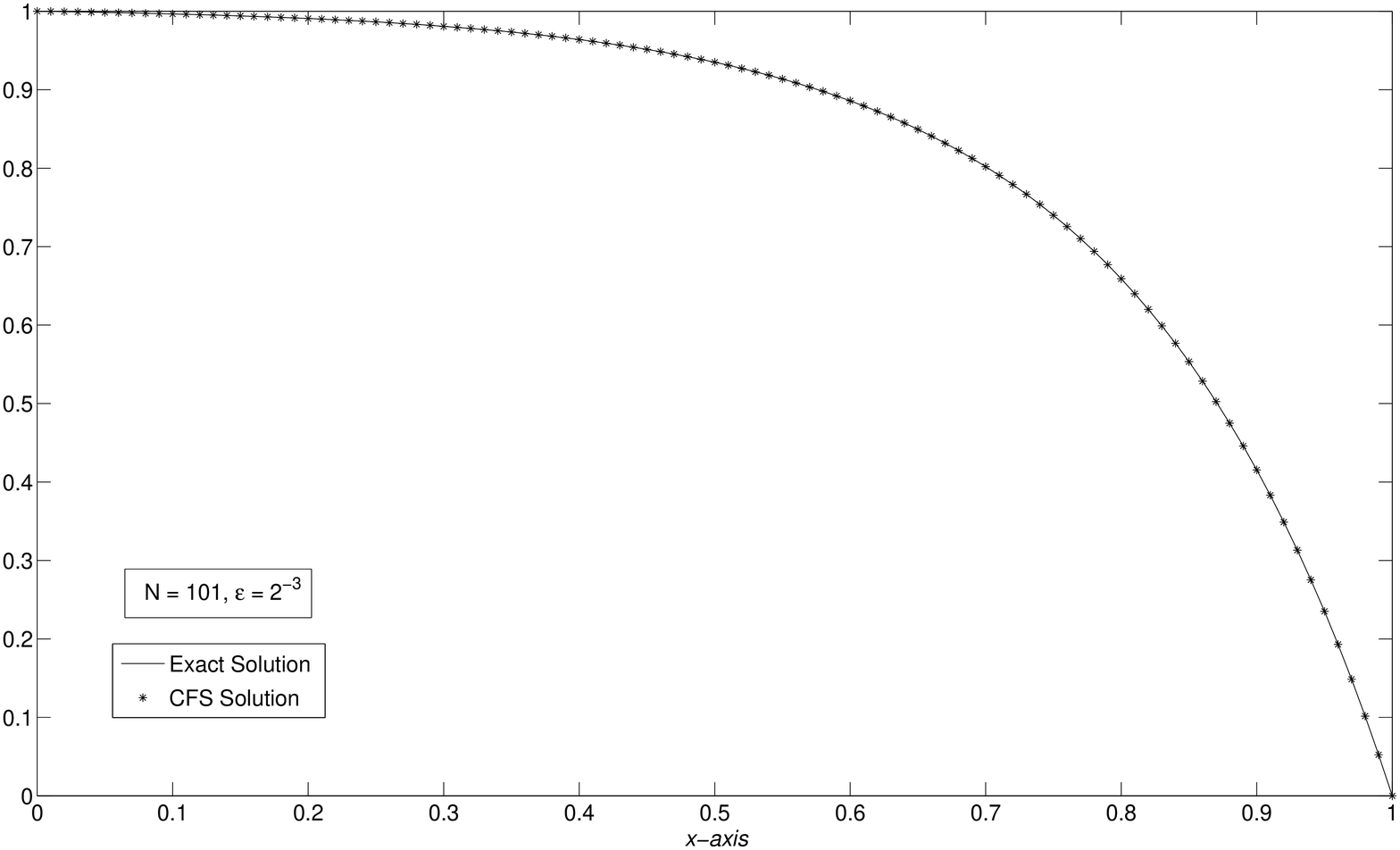}\label{1a}}
\subfloat[]{\includegraphics[width=6cm,height=6cm]{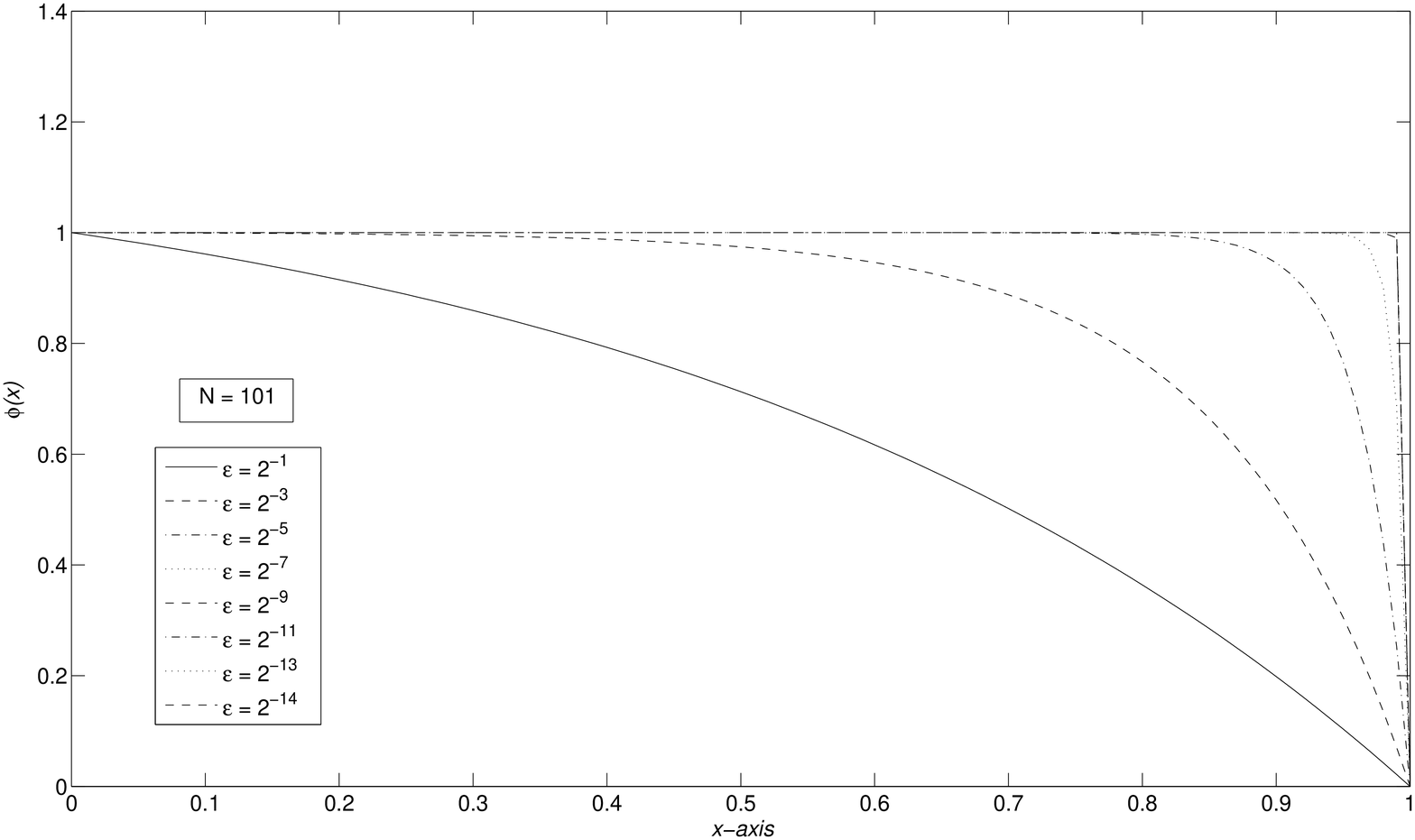}\label{1b}}\\
\subfloat[]{\includegraphics[width=6cm,height=6cm]{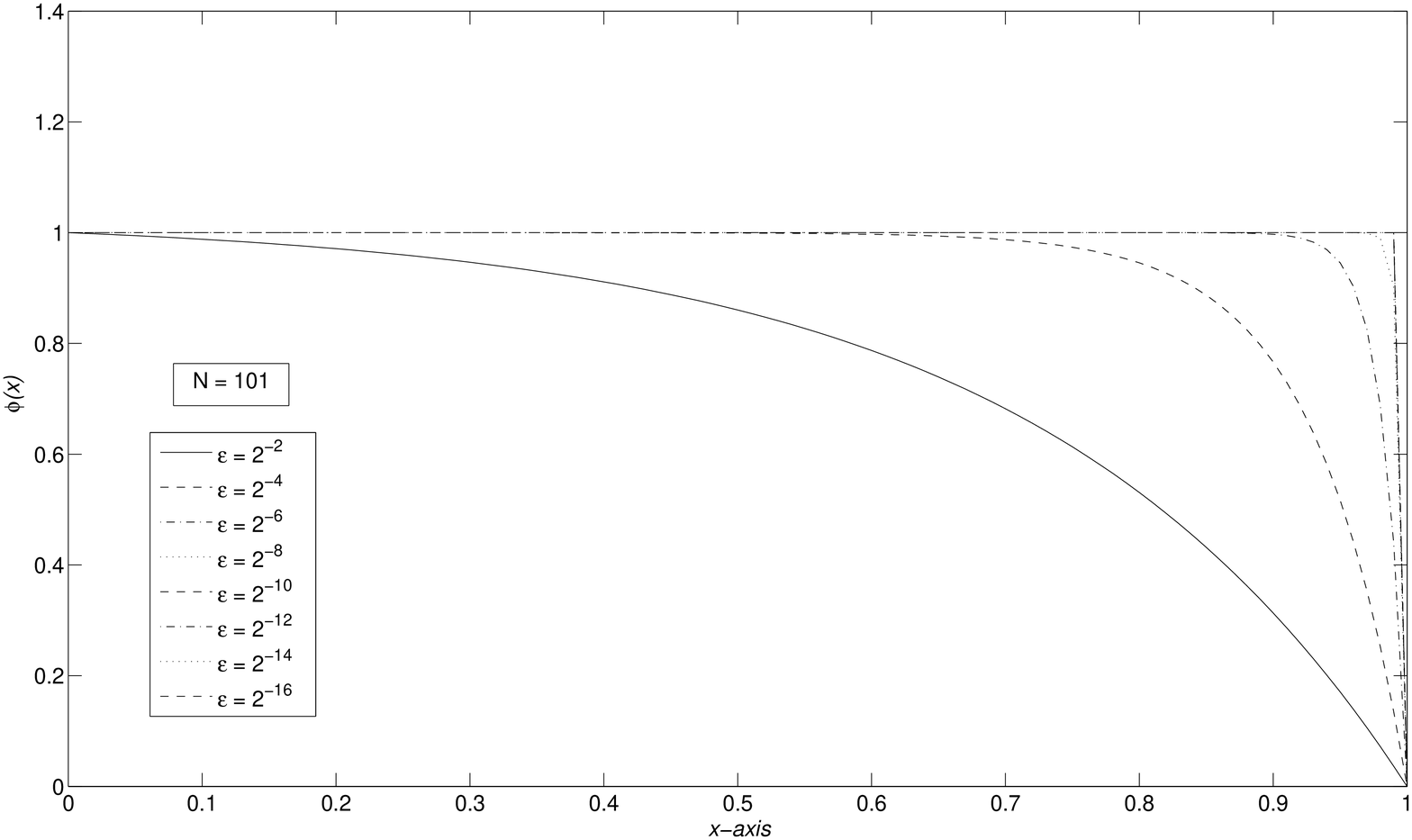}\label{1c}}
\subfloat[]{\includegraphics[width=6cm,height=6cm]{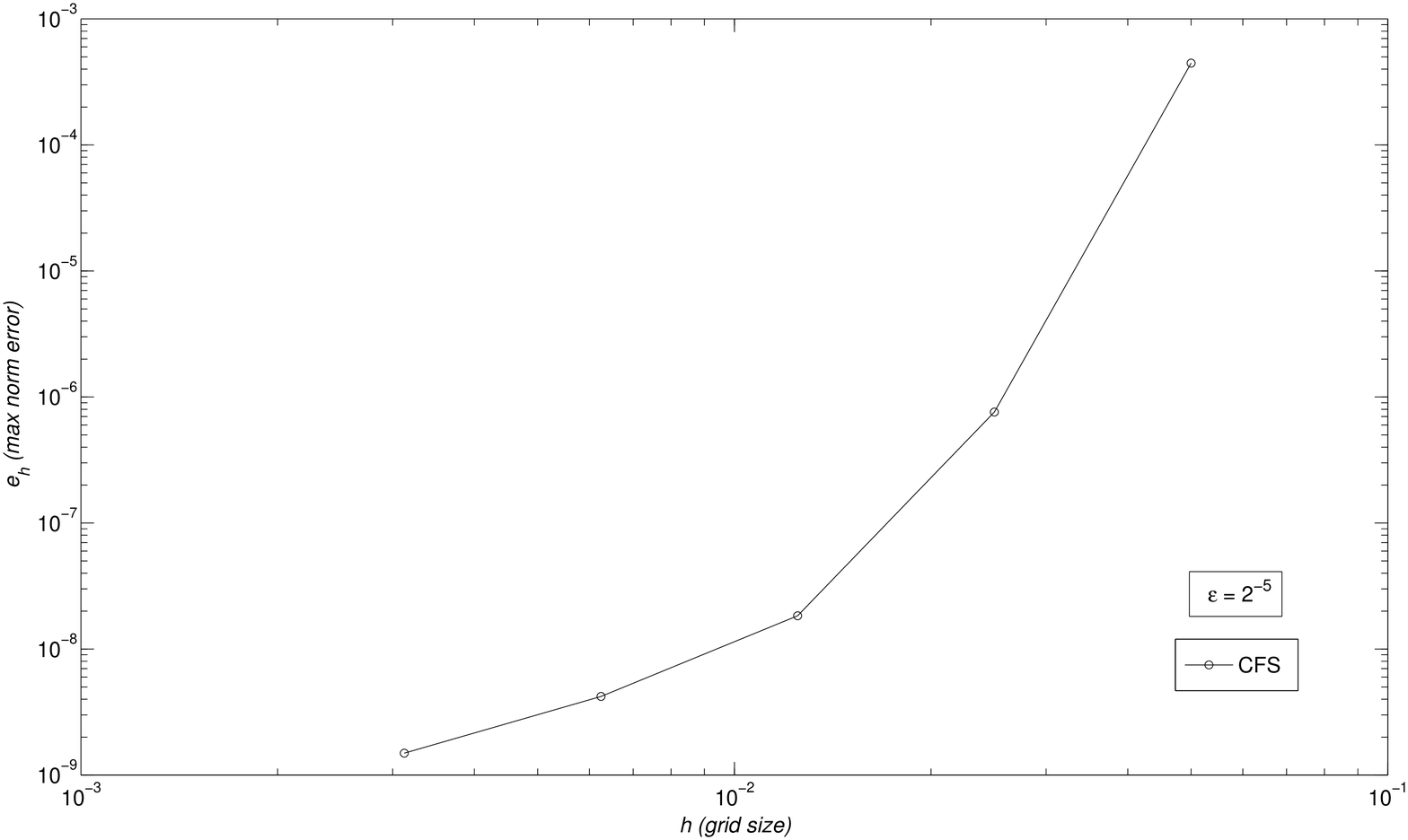}\label{1d}}
\caption{(a) Comparison of the exact $\&$ CFS solutions, (b-c) $\ep$-effects and (d) Log-log graph of the discretization error $e_h$ as a function of the grid size $h$.}
\label{fig:5.9}
\end{figure}

Here, the solution has a thin boundary layer of width $\ep$ near the boundary $x=1$. The comparison of the exact and numerical solutions is as shown in the figure \ref{fig:5.9}(a). Let $h = \Delta x = 1/(N-1)$ be the grid (step) size with $N$ number of grid points. The number of grid points $N=101$ are adequate to capture the boundary layer accurately, in this example. The effects of different values of singular perturbation parameter $\ep$ are shown in the figures \ref{fig:5.9}(b)-\ref{fig:5.9}(c) and from these figures, one can easily notice that as the singular perturbation parameter $\ep$ goes smaller and smaller, the boundary layers become sharper and sharper. The discretization error $e_h$ is calculated in the max norm for different grid sizes $h = 0.05, 0.025, 0.0125, 0.00625, 0.003125$. Log-log graph for the discretization error $e_h$, showing convergence, is shown in the figure \ref{fig:5.9}(d).

\begin{figure}
\subfloat[]{\includegraphics[width=6cm,height=6cm]{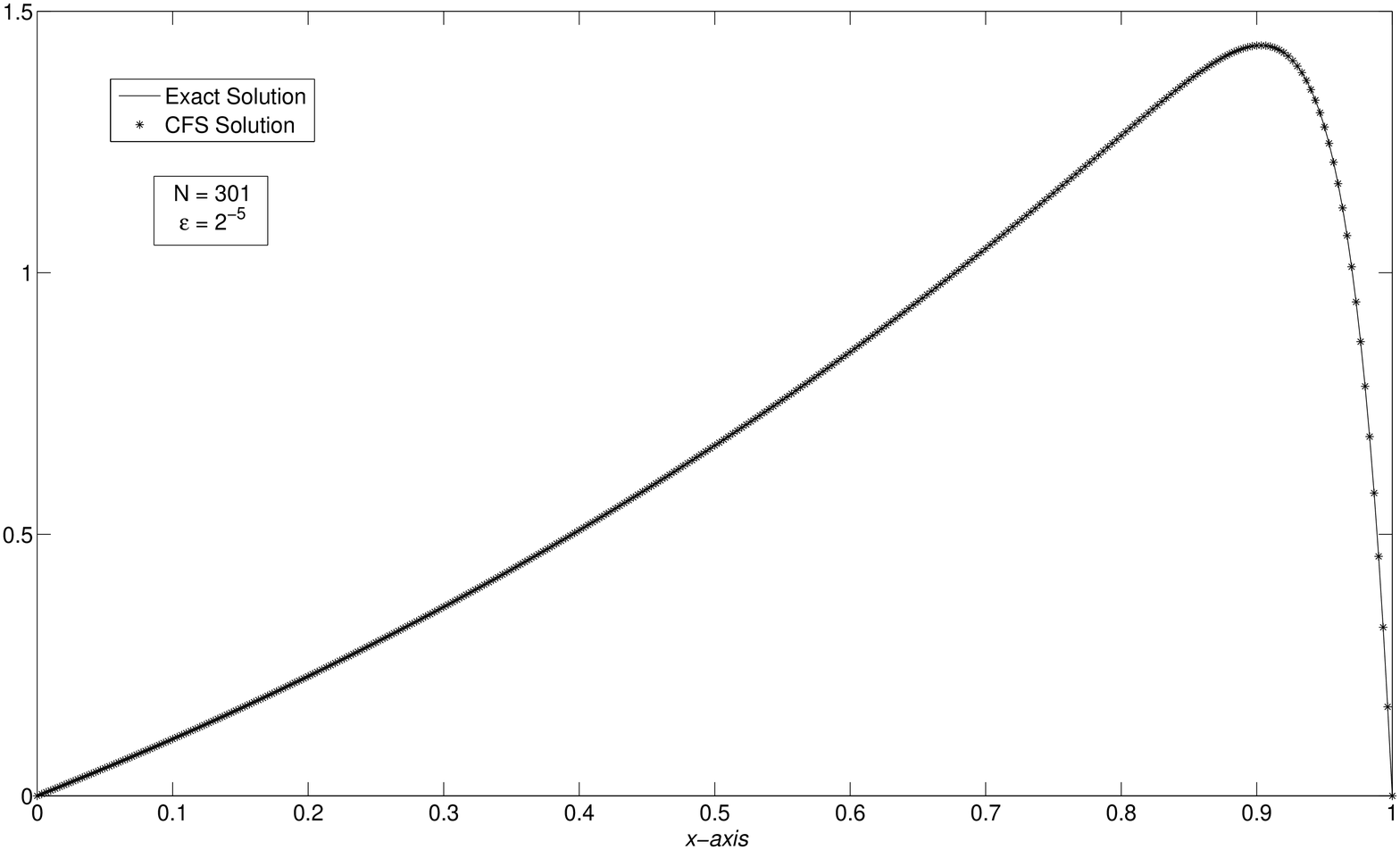}\label{2a}}
\subfloat[]{\includegraphics[width=6cm,height=6cm]{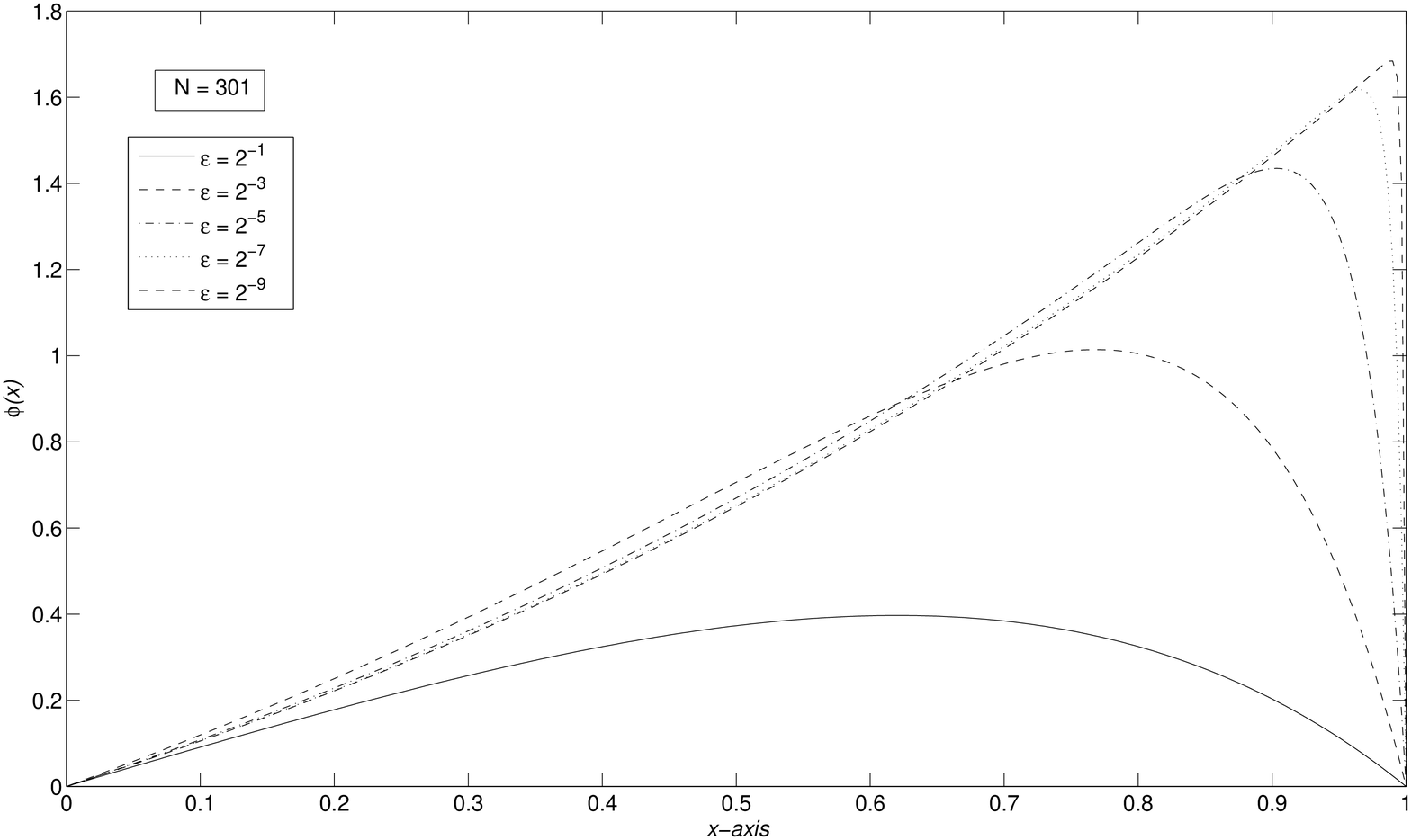}\label{2b}}\\
\subfloat[]{\includegraphics[width=6cm,height=6cm]{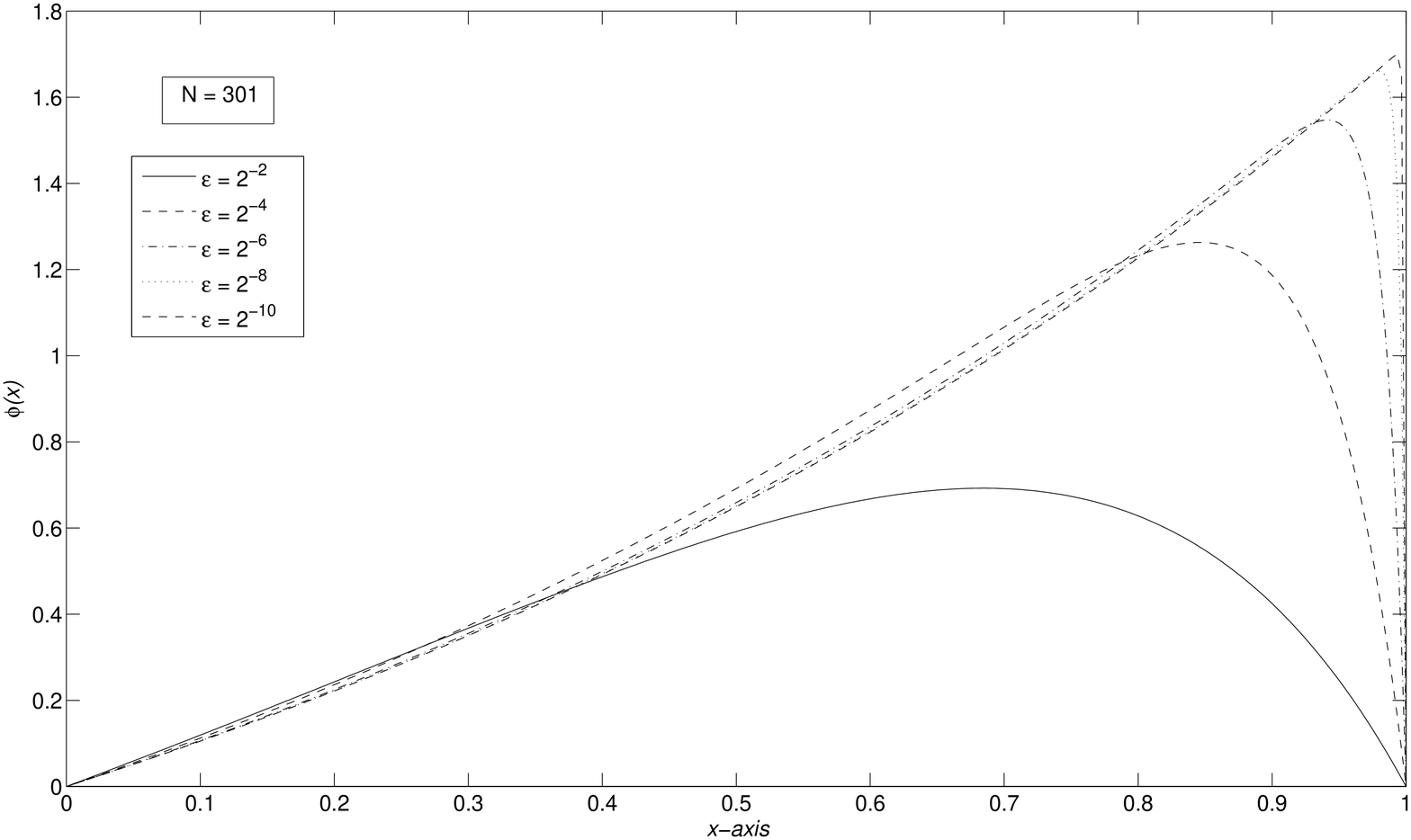}\label{2c}}
\subfloat[]{\includegraphics[width=6cm,height=6cm]{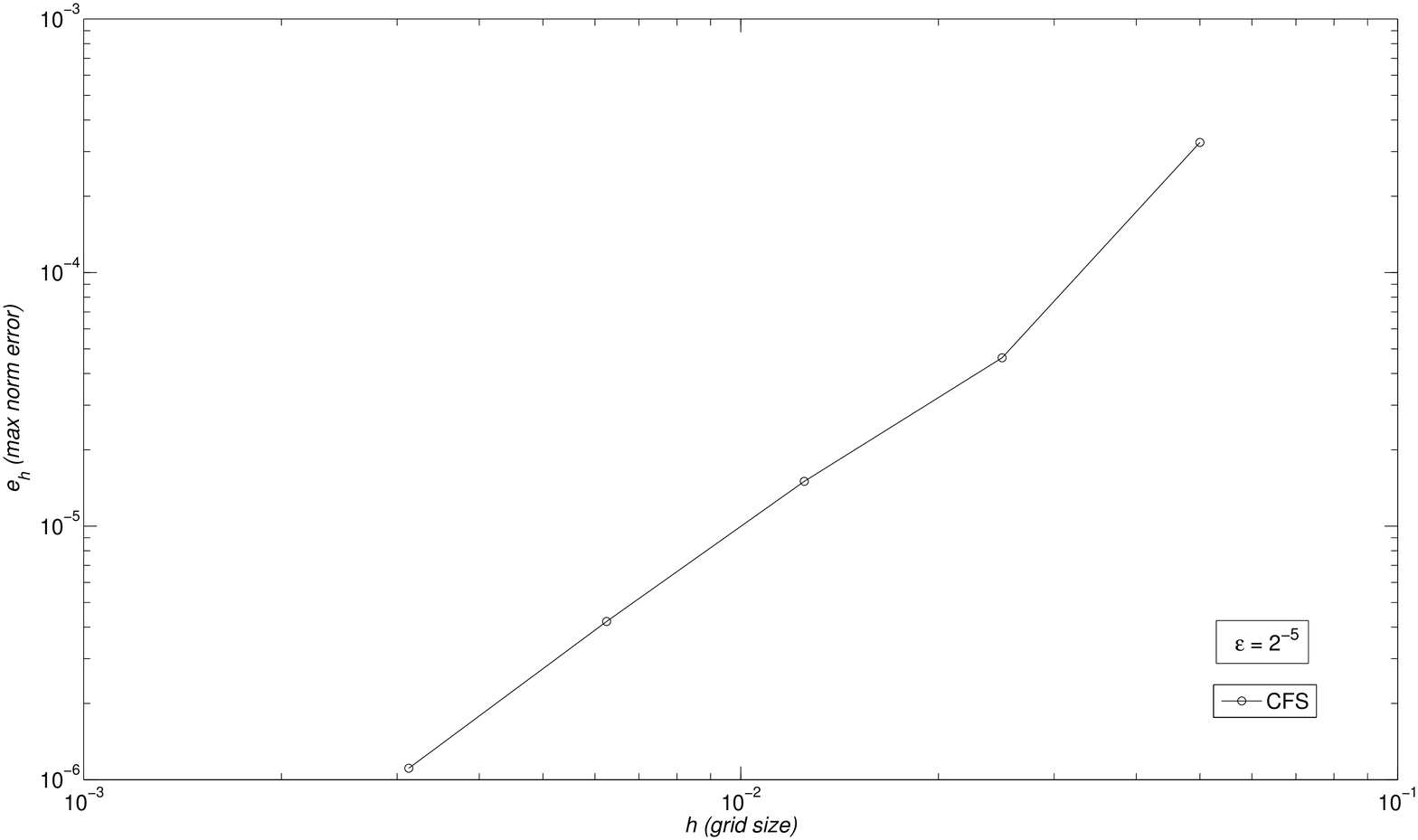}\label{2d}}
\caption{(a) Comparison of the exact $\&$ CFS solutions, (b-c) $\ep$-effects and (d) Log-log graph of the discretization error $e_h$ as a function of the grid size $h$.}
\label{fig:5.3}
\end{figure}

\begin{example} \label{exp:5.3}
Consider the following elliptic SPDDE with appropriate B.C.
\bse \label{eq:5.7.3}
\begin{align}
-\ep &\phi^{\prime\prime}(x) + b\phi^{\prime}(x-\mu)= s, ~ \fa x \in\Omega, \hspace{2cm} \label{eq:5.7.3a} \\
&\phi(0) = ~ \phi_L,~~\phi(1)= \phi_R,  \label{eq:5.7.3b}
\end{align}
\ese
where $0 < \ep \ll 1$, $\mu =0$, $b = 1$, $s =e^{x}$, $\phi_L = 0$, $\phi_R = 0$ and $\Omega = (0,1)$.
The exact solution of the corresponding approximate SPP is given by
\begin{align*}
\phi(x) = \frac{1}{1-\ep}\left( e^{x}-\frac{1-e^{1-{1/\ep}}-(1-e^{1})e^{(x-1)/\ep}}{1-e^{-1/\ep}} \right).
\end{align*}
\end{example}

Here, the solution has a thin boundary layer of width $\ep$ near the boundary $x=1$. The comparison of the exact and numerical solutions is as shown in the figure \ref{fig:5.3}(a). Let $h = \Delta x = 1/(N-1)$ be the grid (step) size with $N$ number of grid points. The number of grid points $N=301$ are adequate to capture the boundary layer accurately, in this example. The effects of different values of singular perturbation parameter $\ep$ are shown in the figures \ref{fig:5.3}(b)-\ref{fig:5.3}(c) and from these figures, one can easily notice that as the singular perturbation parameter $\ep$ goes smaller and smaller, the boundary layers become sharper and sharper. The discretization error $e_h$ is calculated in the max norm for different grid sizes $h = 0.05, 0.025, 0.0125, 0.00625, 0.003125$. Log-log graph for the discretization error $e_h$, showing convergence, is shown in the figure \ref{fig:5.3}(d).

\begin{example} \label{exp:5.4}
Consider the following elliptic SPDDE with appropriate B.C.
\bse \label{eq:5.7.4}
\begin{align}
-\ep &\phi^{\prime\prime}(x) + b\phi^{\prime}(x-\mu)= s, ~ \fa x \in\Omega, \hspace{2cm} \label{eq:5.7.4a} \\
&\phi(0) = ~ \phi_L,~~\phi(1)= \phi_R,  \label{eq:5.7.4b}
\end{align}
\ese
where $0 < \ep \ll 1$, $\mu =0.2 \ep$, $b = -1$, $s =-\phi(x)$, $\phi_L = 1$, $\phi_R = 1$ and $\Omega = (0,1)$.
The exact solution of the corresponding approximate SPP is given by
\begin{align*}
\phi(x) = \frac{(1-e^{m_2})e^{m_1 x}+(e^{m_1}-1)e^{m_2 x}}{e^{m_1 } - e^{m_2 }},
\end{align*}
where $m_1 = \frac{-1+\sqrt{1+4(\ep-\mu)}}{2(\ep-\mu)}$ and $m_2 = \frac{-1-\sqrt{1+4(\ep-\mu)}}{2(\ep-\mu)}$.
\end{example}

Here, the solution has a thin boundary layer of width $\ep$ near the boundary $x=0$. The comparison of the exact and numerical solutions is as shown in the figure \ref{fig:5.4}(a). Let $h = \Delta x = 1/(N-1)$ be the grid (step) size with $N$ number of grid points. The number of grid points $N=101$ are adequate to capture the boundary layer accurately, in this example. The effects of different values of singular perturbation parameter $\ep$ are shown in the figures \ref{fig:5.4}(b)-\ref{fig:5.4}(c) and from these figures, one can easily notice that as the singular perturbation parameter $\ep$ goes smaller and smaller, the boundary layers become sharper and sharper. The discretization error $e_h$ is calculated in the max norm for different grid sizes $h = 0.05, 0.025, 0.0125, 0.00625, 0.003125$. Log-log graph for the discretization error $e_h$, showing convergence, is shown in the figure \ref{fig:5.4}(d).

\begin{figure}
\subfloat[]{\includegraphics[width=6cm,height=6cm]{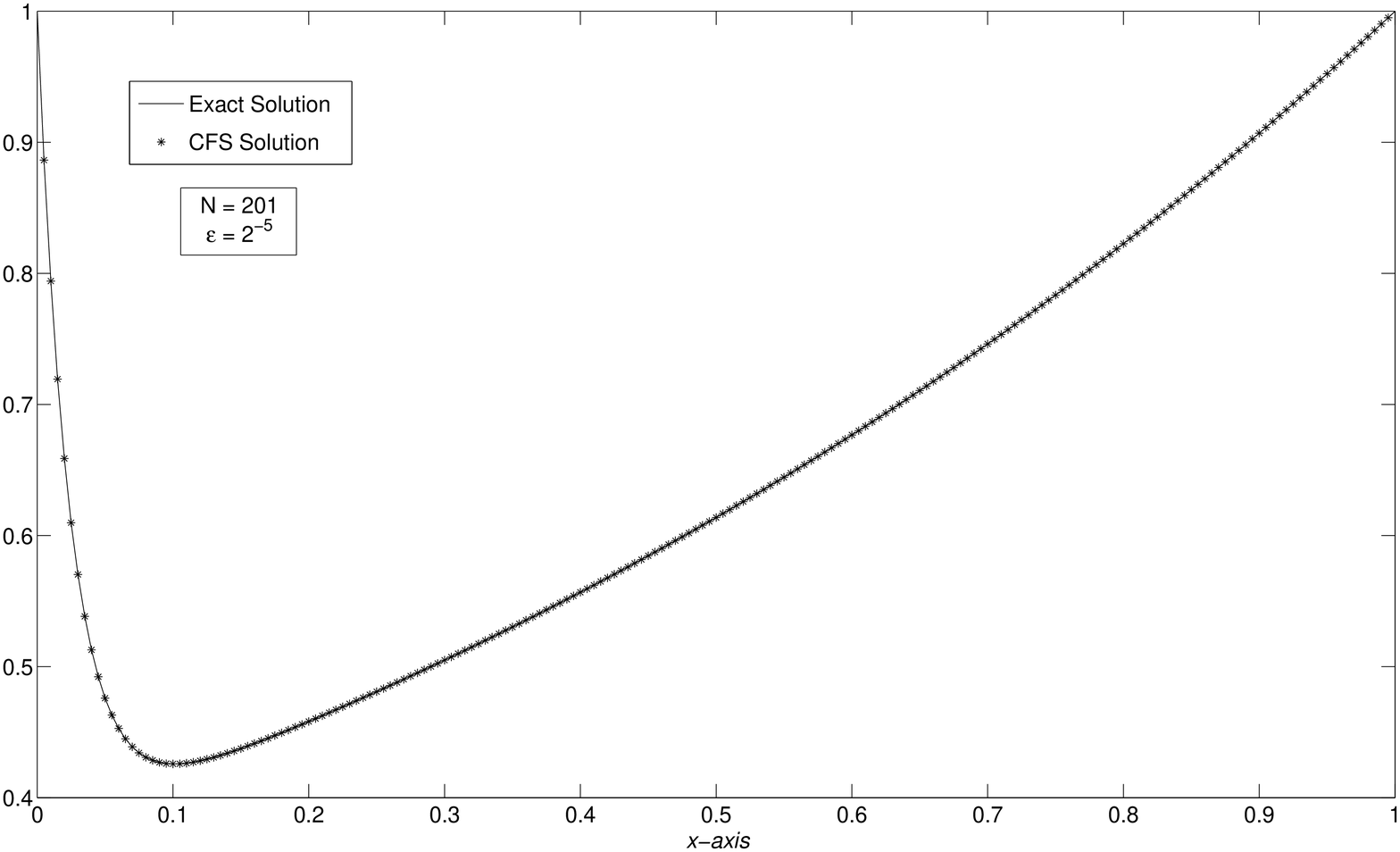}\label{3a}}
\subfloat[]{\includegraphics[width=6cm,height=6cm]{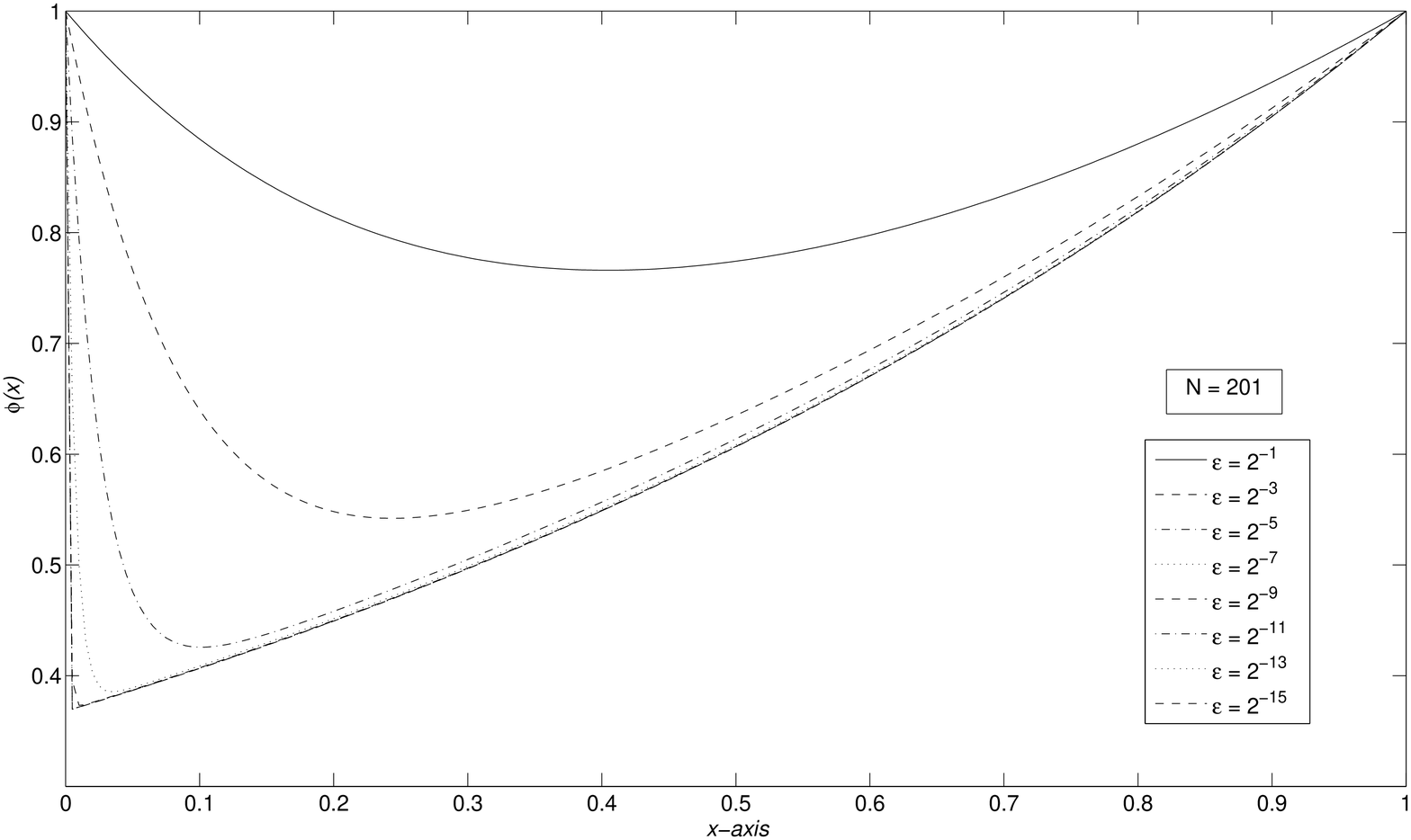}\label{3b}}\\
\subfloat[]{\includegraphics[width=6cm,height=6cm]{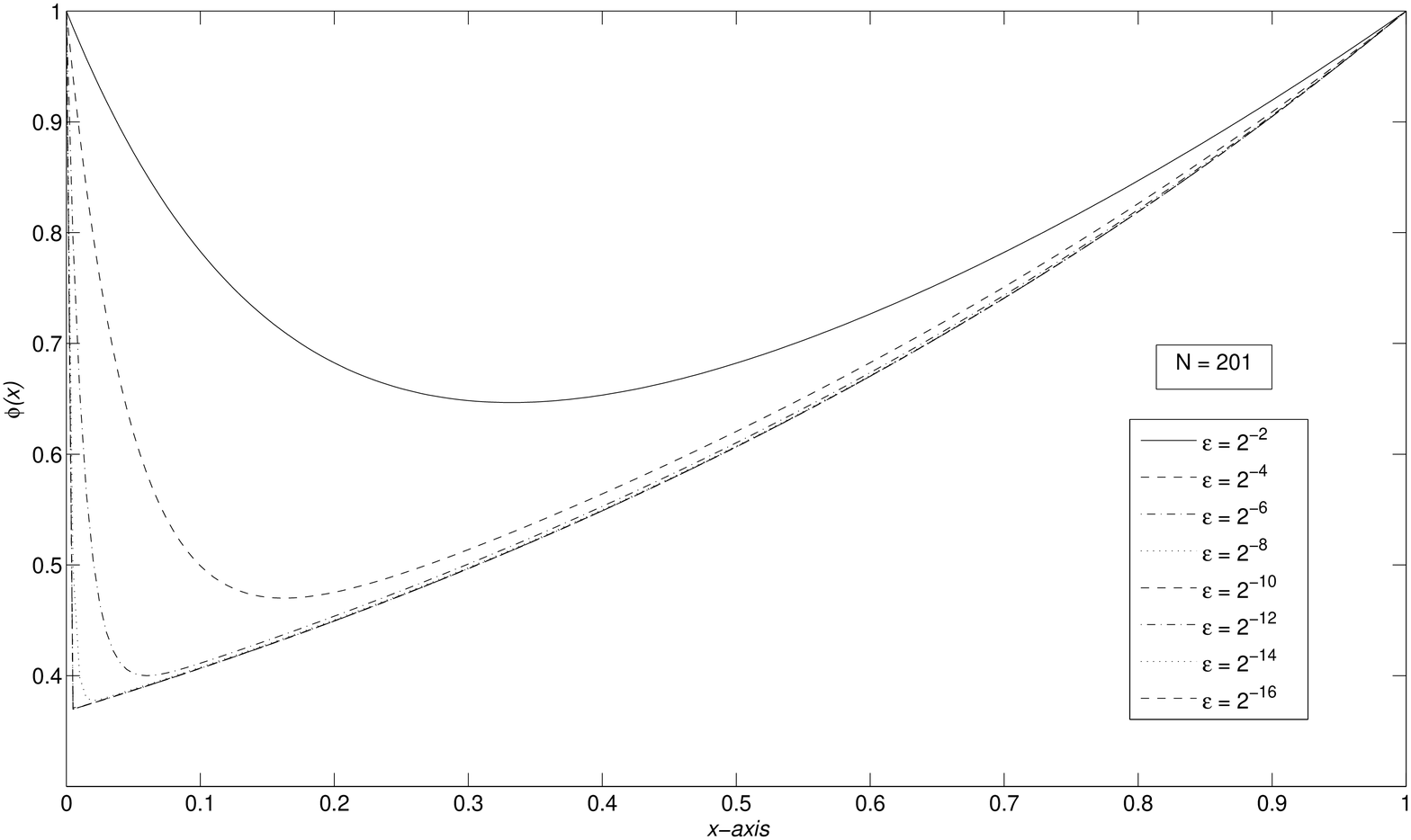}\label{3c}}
\subfloat[]{\includegraphics[width=6cm,height=6cm]{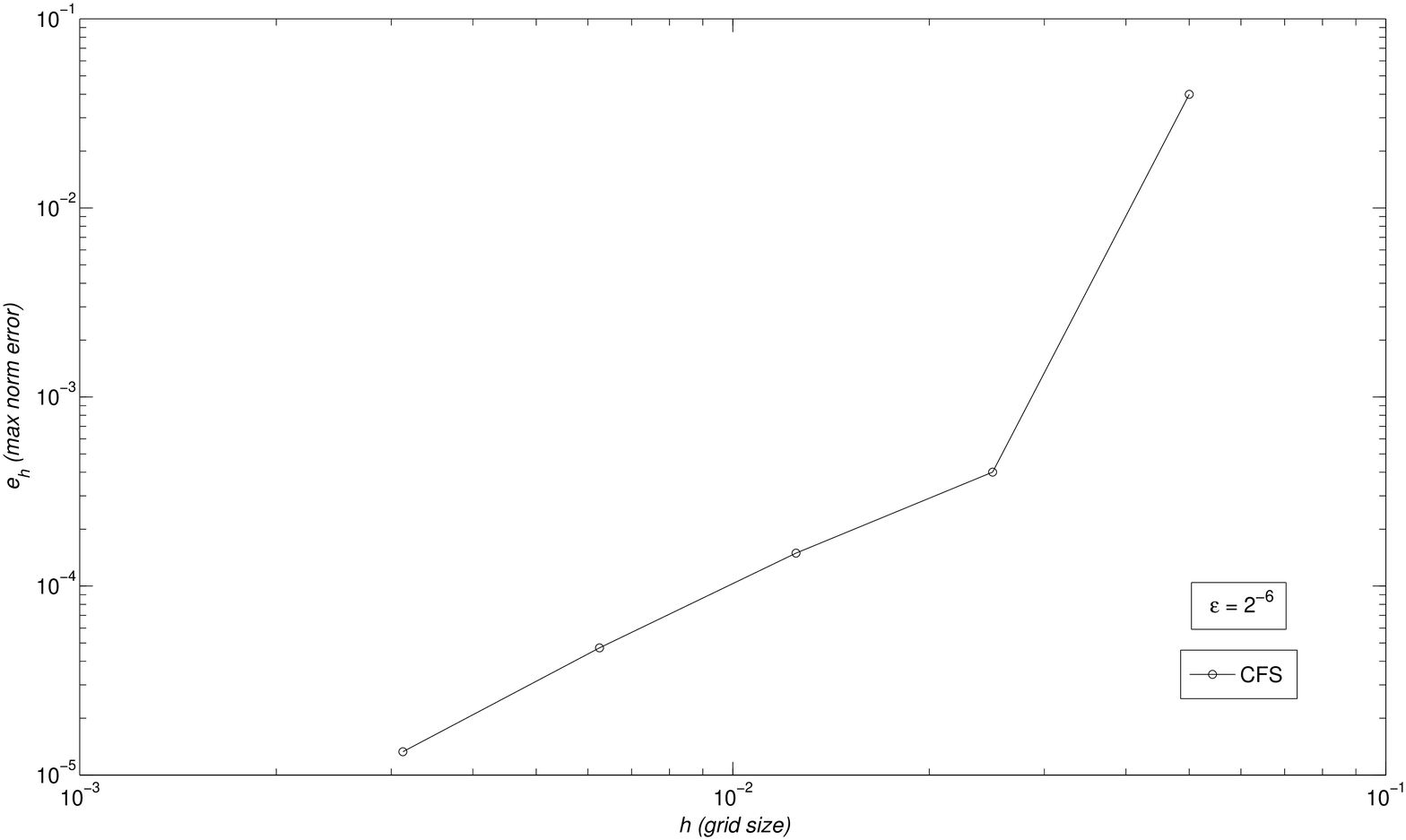}\label{3d}}
\caption{(a) Comparison of the exact $\&$ CFS solutions, (b-c) $\ep$-effects and (d) Log-log graph of the discretization error $e_h$ as a function of the grid size $h$.}
\label{fig:5.4}
\end{figure}

\begin{example} \label{exp:5.2}
Consider the following elliptic SPDDE with appropriate B.C.
\bse \label{eq:5.7.2}
\begin{align}
-\ep &\phi^{\prime\prime}(x) + b\phi^{\prime}(x-\mu)= s, ~ \fa x \in\Omega, \hspace{2cm} \label{eq:5.7.2a} \\
&\phi(0) = \phi_L,~~\phi(1)= \phi_R,  \label{eq:5.7.2b}
\end{align}
\ese
where $0 < \ep \ll 1$, $\mu =0$, $b = 1$, $s =-(1+\ep)\phi$, $\phi_L = 1 + e^{-(1 + \ep)/\ep}$, $\phi_R = 1 + e^{-1}$ and $\Omega = (0,1)$.
The exact solution of the corresponding approximate SPP is given by
\begin{align*}
\phi(x) = e^{(1+\ep)(x-1)/\ep} + e^{-x}.
\end{align*}
\end{example}

Here, the solution has a thin boundary layer of width $\ep$ near the boundary $x=1$ as. The comparison of the exact and numerical solutions is shown in the figure \ref{fig:5.2}(a). Let $h = \Delta x = 1/(N-1)$ be the grid (step) size with $N$ number of grid points. The number of grid points $N=101$ are adequate to capture the boundary layer accurately, in this example. The effects of different values of singular perturbation parameter $\ep$ are shown in the figures \ref{fig:5.2}(b)-\ref{fig:5.2}(c) and from these figures, one can easily notice that as the singular perturbation parameter $\ep$ goes smaller and smaller, the boundary layers become sharper and sharper. The discretization error $e_h$ is calculated in the max norm for different grid sizes $h = 0.05, 0.025, 0.0125, 0.00625, 0.003125$. Log-log graph for the discretization error $e_h$, showing convergence, is shown in the figure \ref{fig:5.2}(d).

\begin{figure}
\subfloat[]{\includegraphics[width=6cm,height=6cm]{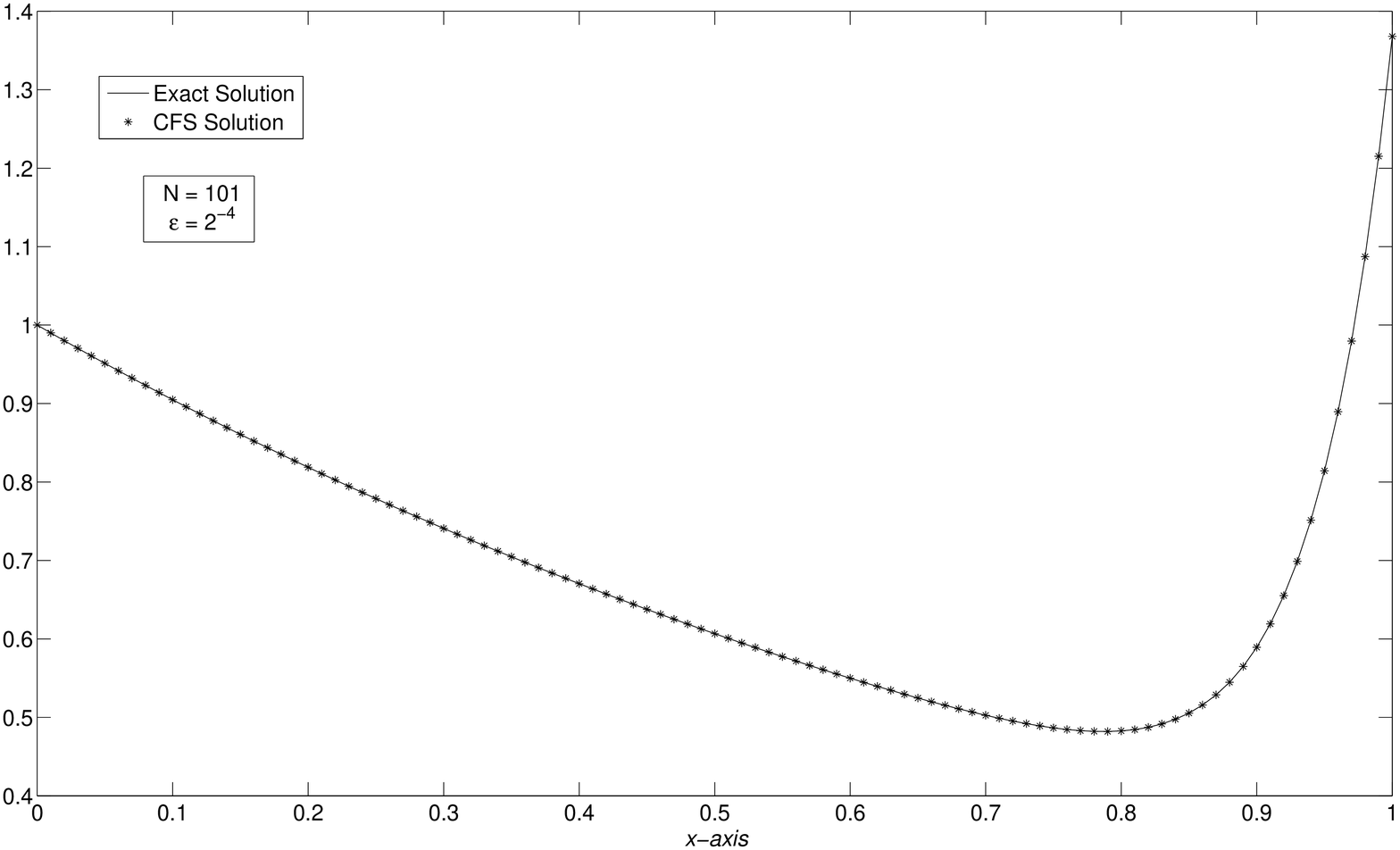}\label{4a}}
\subfloat[]{\includegraphics[width=6cm,height=6cm]{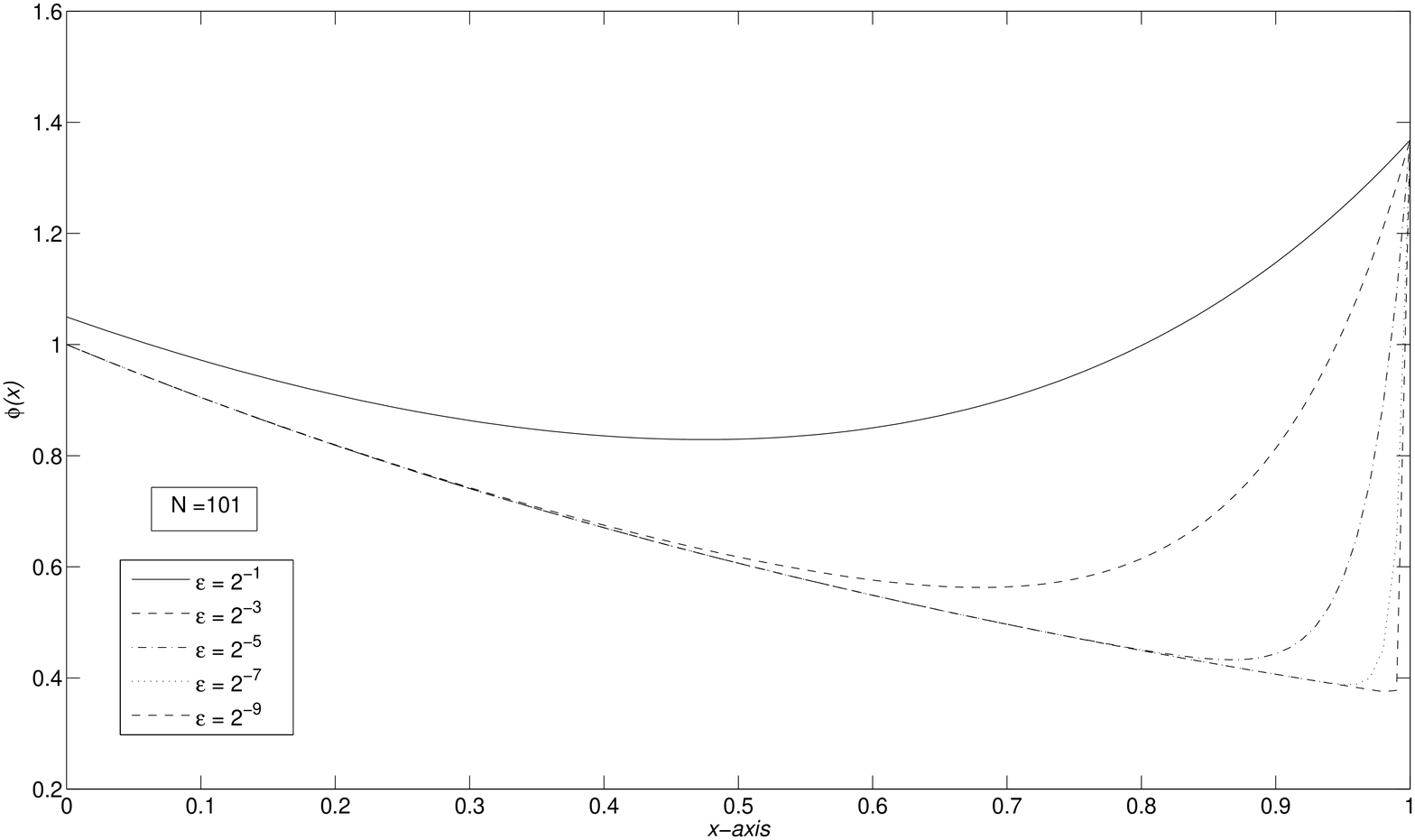}\label{4b}}\\
\subfloat[]{\includegraphics[width=6cm,height=6cm]{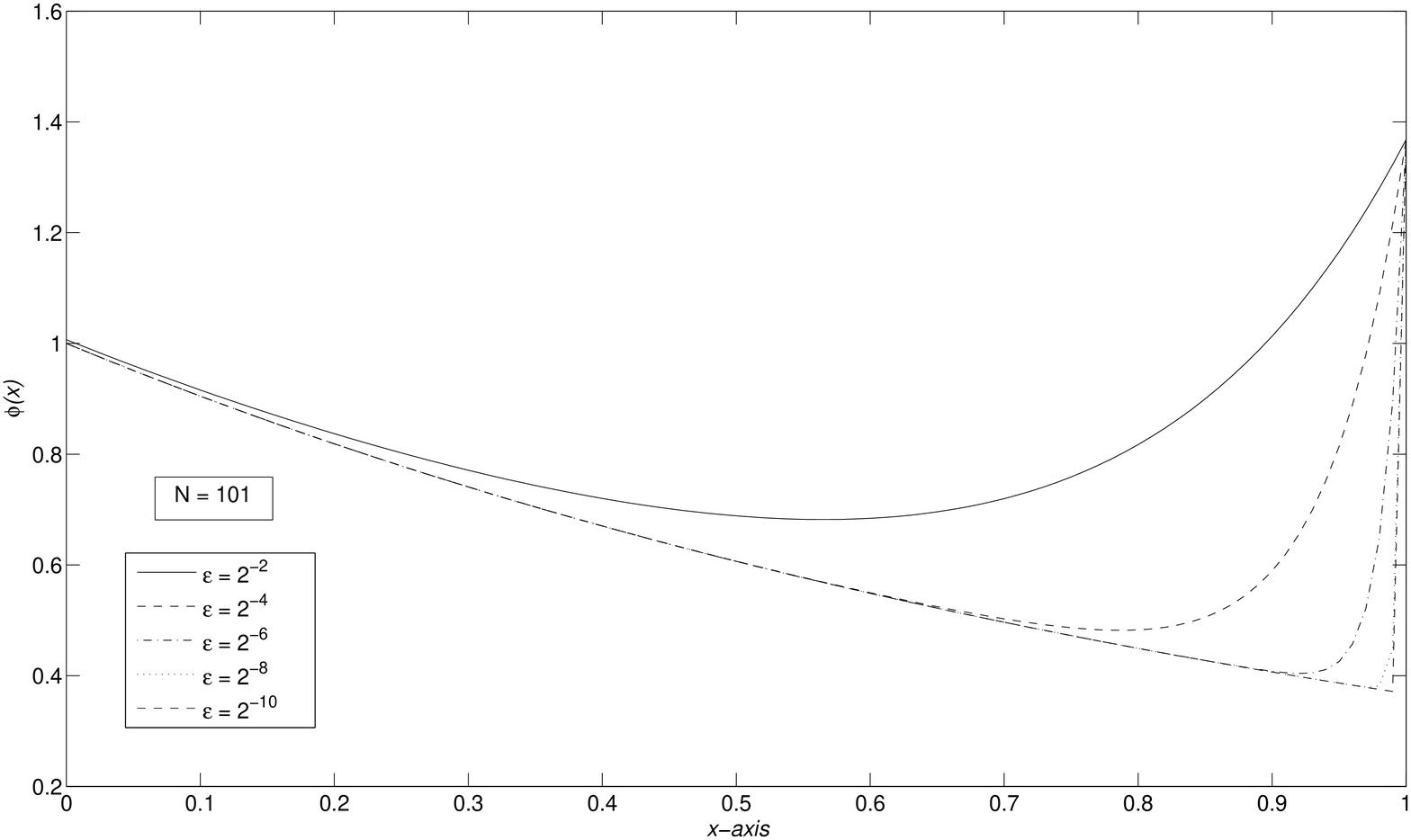}\label{4c}}
\subfloat[]{\includegraphics[width=6cm,height=6cm]{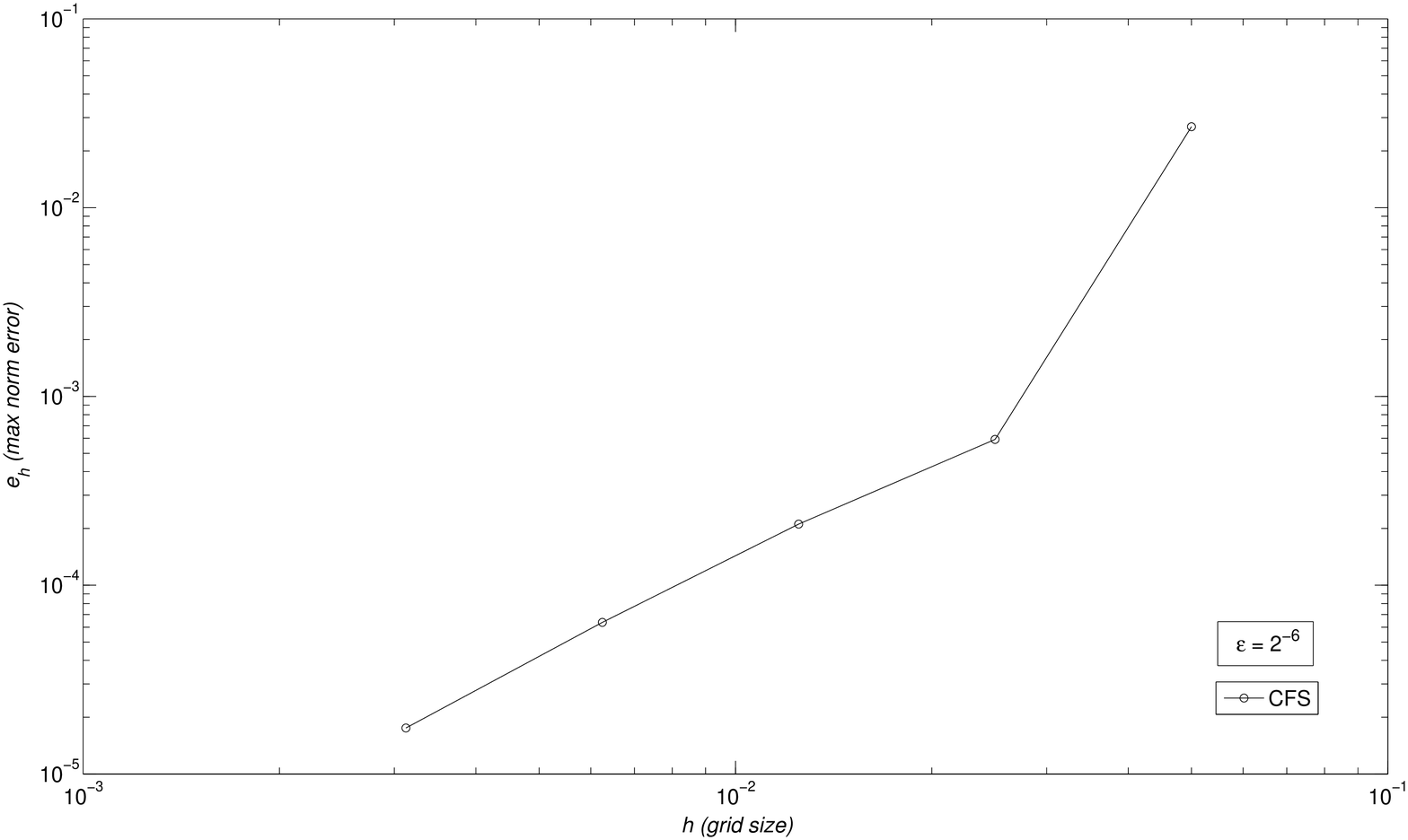}\label{4d}}
\caption{(a) Comparison of the exact $\&$ CFS solutions, (b-c) $\ep$-effects and (d) Log-log graph of the discretization error $e_h$ as a function of the grid size $h$.}
\label{fig:5.2}
\end{figure}

\begin{example} \label{exp:5.5}
Consider the following elliptic SPDDE with appropriate B.C.
\bse \label{eq:5.7.5}
\begin{align}
-\ep &\phi^{\prime\prime}(x) + b\phi^{\prime}(x-\mu)= s, ~ \fa x \in\Omega, \hspace{2cm} \label{eq:5.7.5a} \\
&\phi(0) = ~ \phi_L,~~\phi(1)= \phi_R,  \label{eq:5.7.5b}
\end{align}
\ese
where $0 < \ep \ll 1$, $\mu =0$, $b = -1$, $s =-(1+2 x)$, $\phi_L = 0$, $\phi_R = 1$ and $\Omega = (0,1)$.
The exact solution of the corresponding approximate SPP is given by
\begin{align*}
\phi(x) = x(x+1-2\ep) + (2\ep -1)\frac{1-e^{-x/\ep}}{1-e^{-1/\ep}}.
\end{align*}
\end{example}

Here, the solution has a thin boundary layer of width $\ep$ near the boundary $x=1$. The comparison of the exact and numerical solutions is as shown in the figure \ref{fig:5.5}(a). Let $h = \Delta x = 1/(N-1)$ be the grid (step) size with $N$ number of grid points. The number of grid points $N=201$ are adequate to capture the boundary layer accurately, in this example. The effects of different values of singular perturbation parameter $\ep$ are shown in the figures \ref{fig:5.5}(b)-\ref{fig:5.5}(c) and from these figures, one can easily notice that as the singular perturbation parameter $\ep$ goes smaller and smaller, the boundary layers become sharper and sharper. The discretization error $e_h$ is calculated in the max norm for different grid sizes $h = 0.05, 0.025, 0.0125, 0.00625, 0.003125$. Log-log graph for the discretization error $e_h$, showing convergence, is shown in the figure \ref{fig:5.5}(d).

\begin{figure}
\subfloat[]{\includegraphics[width=6cm,height=6cm]{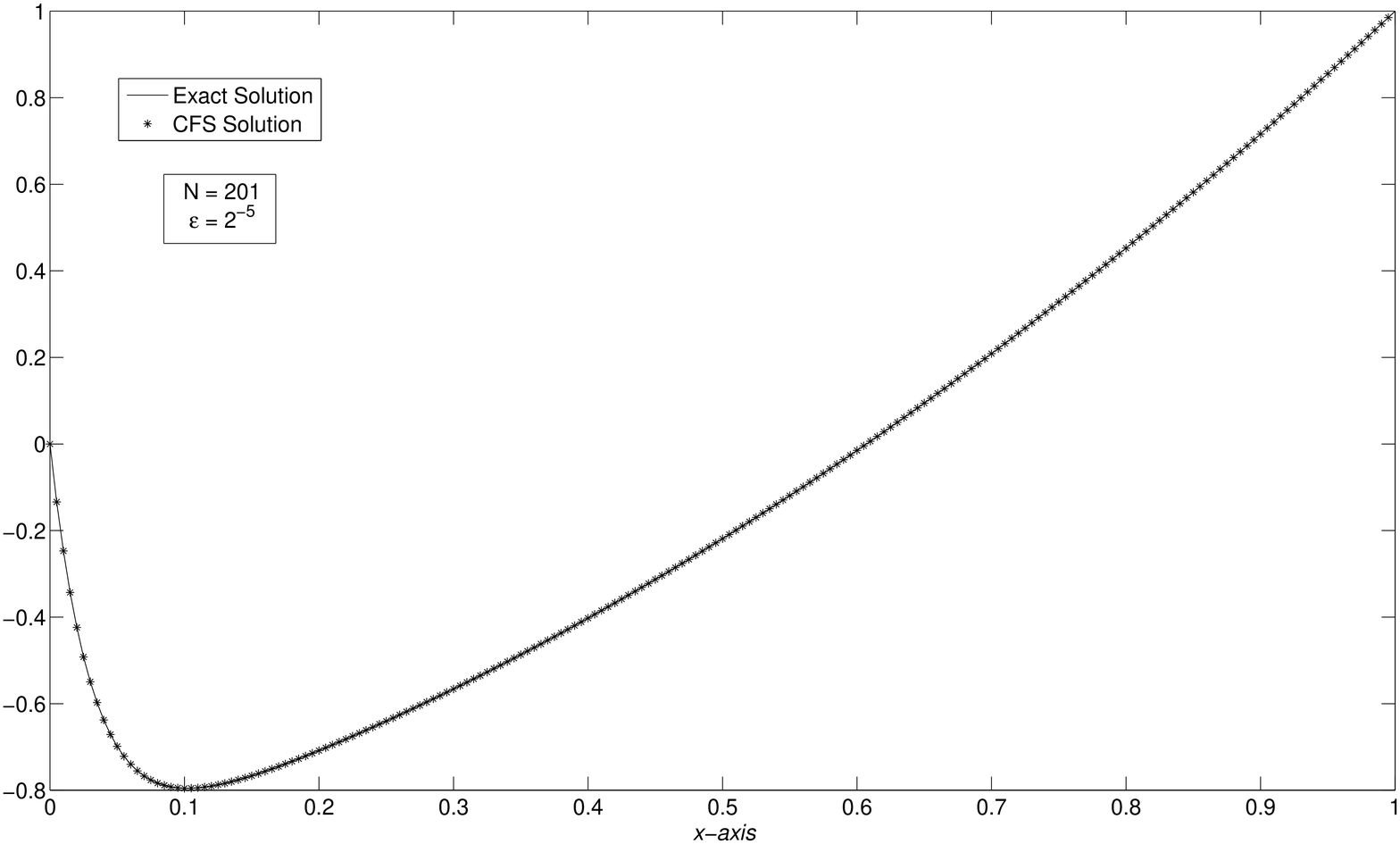}\label{5a}}
\subfloat[]{\includegraphics[width=6cm,height=6cm]{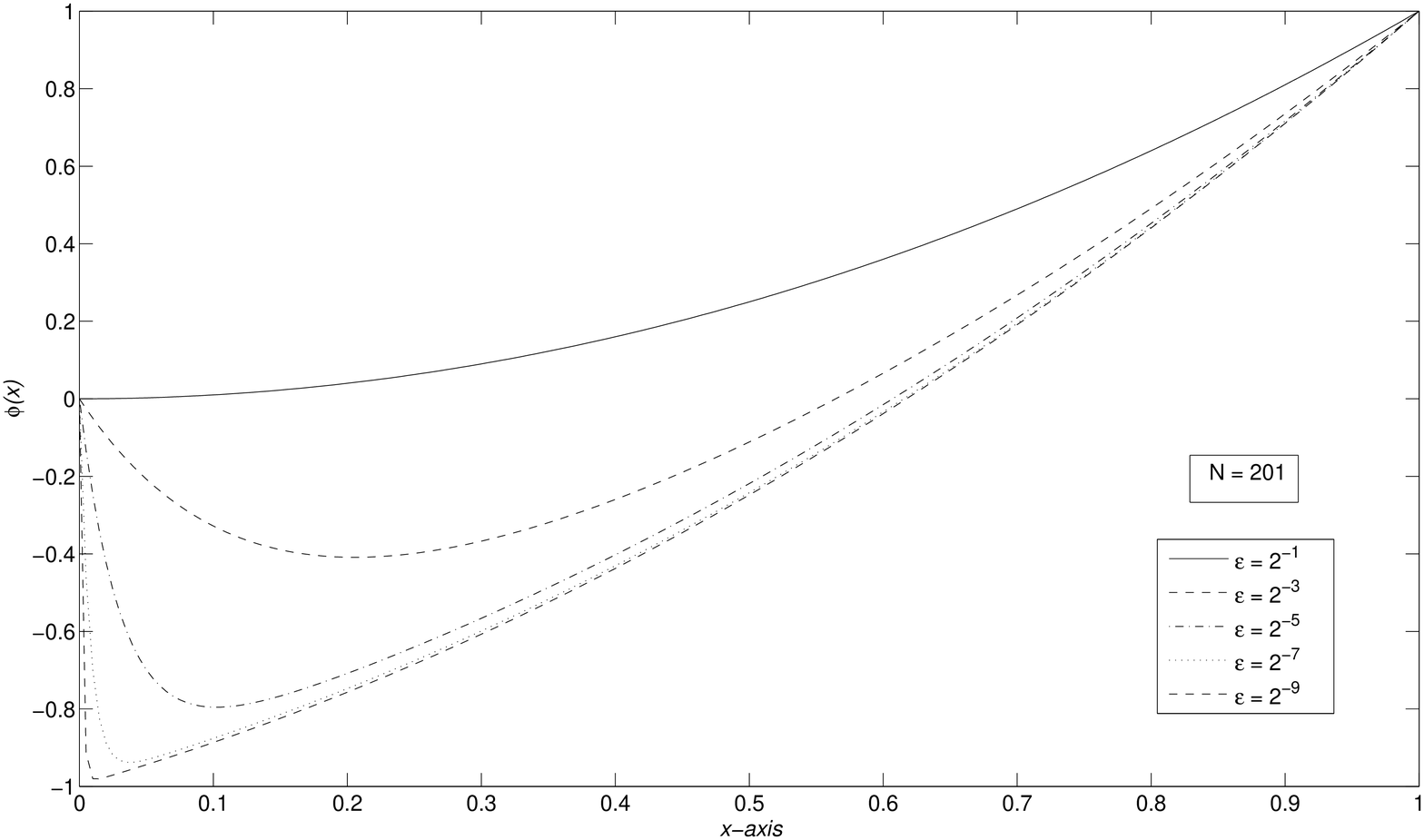}\label{5b}}\\
\subfloat[]{\includegraphics[width=6cm,height=6cm]{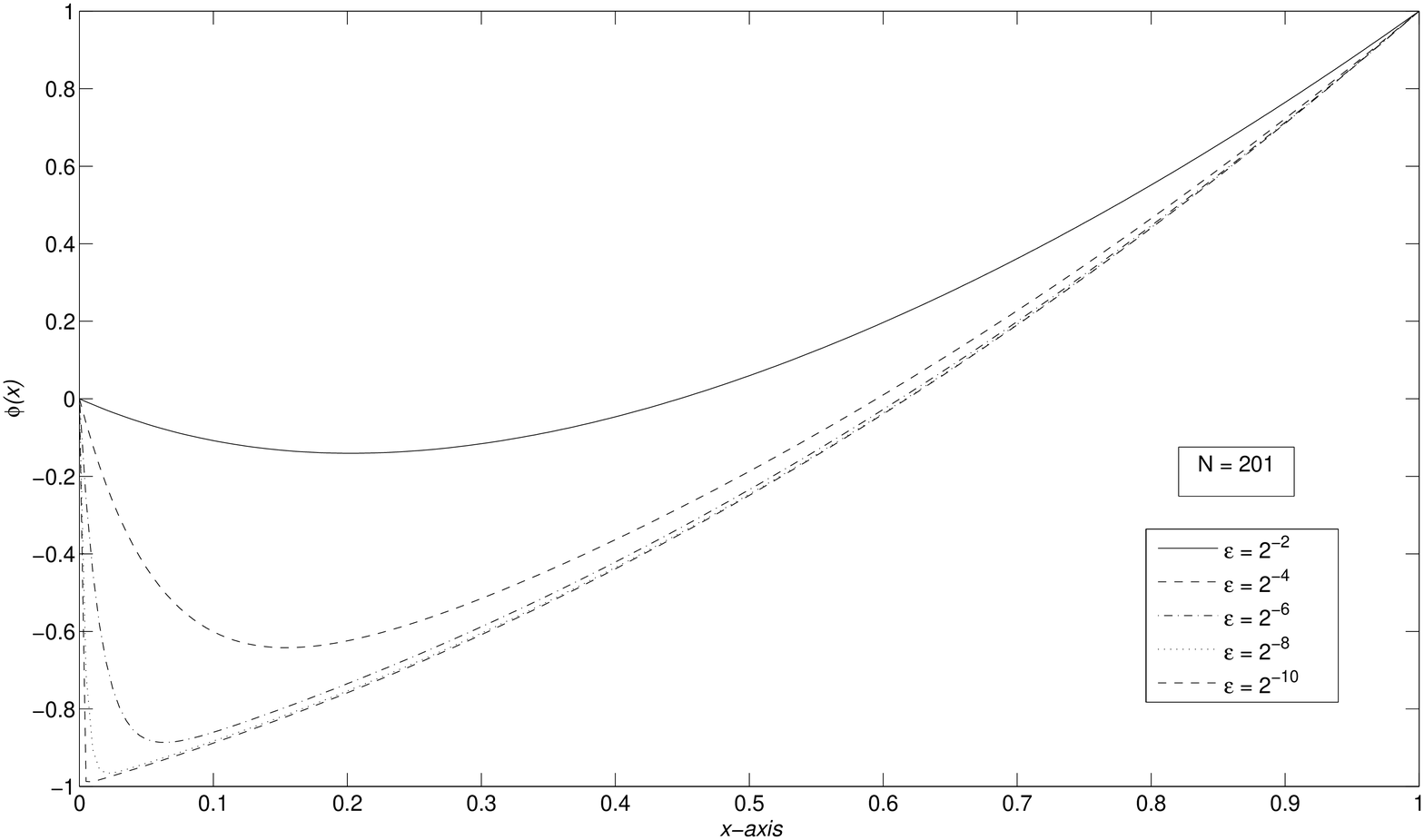}\label{5c}}
\subfloat[]{\includegraphics[width=6cm,height=6cm]{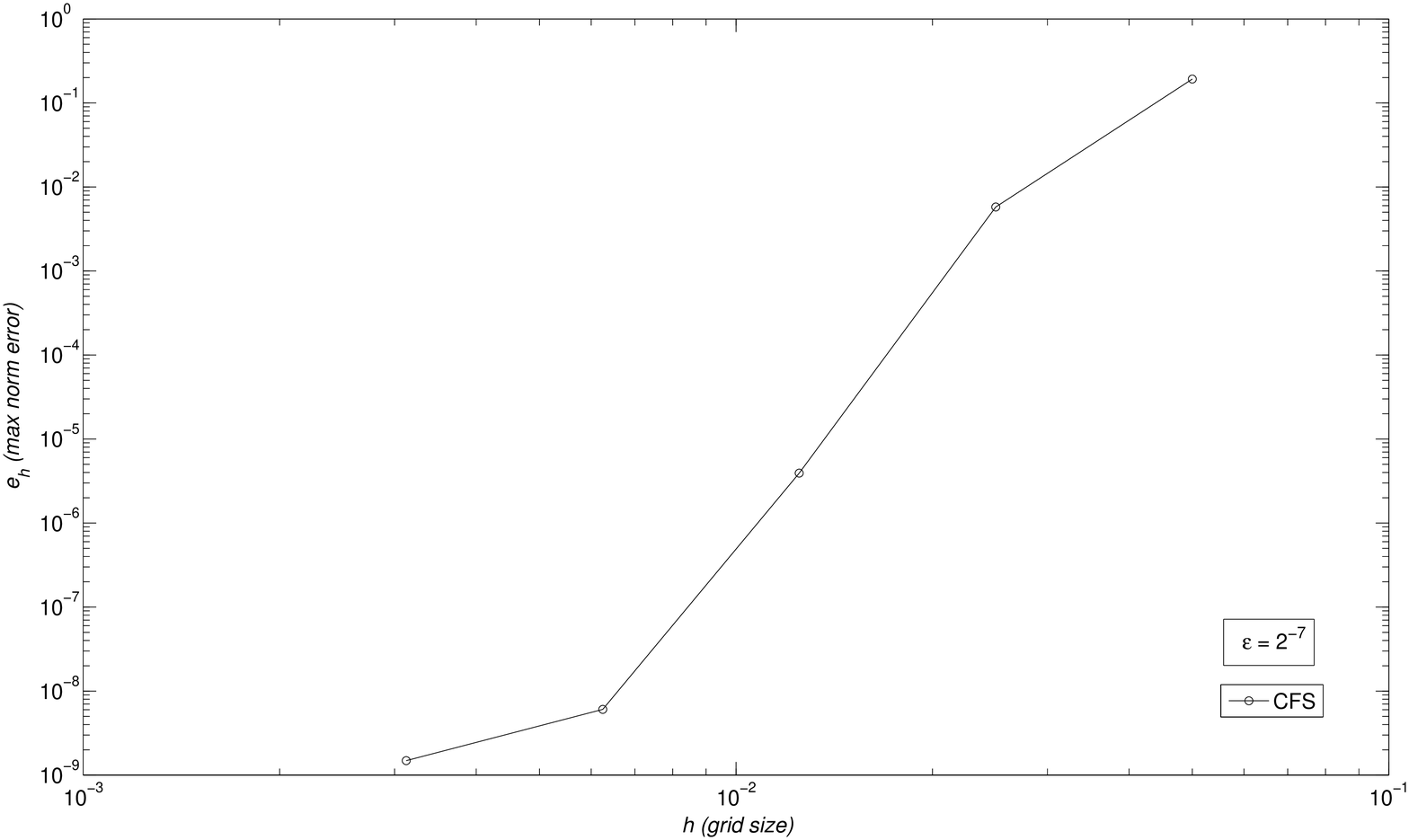}\label{5d}}
\caption{(a) Comparison of the exact $\&$ CFS solutions, (b-c) $\ep$-effects and (d) Log-log graph of the discretization error $e_h$ as a function of the grid size $h$.}
\label{fig:5.5}
\end{figure}

\begin{example} \label{exp:5.6}
Consider the following elliptic SPDDE with appropriate B.C.
\bse \label{eq:5.7.6}
\begin{align}
-\ep &\phi^{\prime\prime}(x) + \phi(x)= s, ~ \fa x \in\Omega, \hspace{2cm} \label{eq:5.7.6a} \\
&\phi(0) = ~ \phi_L,~~\phi(1)= \phi_R,  \label{eq:5.7.6b}
\end{align}
\ese
where $0 < \ep \ll 1$, $b = 0$, $s = x$, $\phi_L = 1$, $\phi_R = 1+ e^{-1/\sqrt\ep}$ and $\Omega = (0,1)$.
The exact solution of the corresponding approximate SPP is given by
\begin{align*}
\phi(x) = e^{-x/\sqrt\ep} + x.
\end{align*}
\end{example}

Here, the solution has a thin boundary layer of width $\ep$ near the boundary $x=1$. The comparison of the exact and numerical solutions is as shown in the figure \ref{fig:5.6}(a). Let $h = \Delta x = 1/(N-1)$ be the grid (step) size with $N$ number of grid points. The number of grid points $N=201$ are adequate to capture the boundary layer accurately, in this example. The effects of different values of singular perturbation parameter $\ep$ are shown in the figures \ref{fig:5.6}(b)-\ref{fig:5.6}(c) and from these figures, one can easily notice that as the singular perturbation parameter $\ep$ goes smaller and smaller, the boundary layers become sharper and sharper. The discretization error $e_h$ is calculated in the max norm for different grid sizes $h = 0.05, 0.025, 0.0125, 0.00625, 0.003125$. Log-log graph for the discretization error $e_h$, showing convergence, is shown in the figure \ref{fig:5.6}(d).

\begin{figure}
\subfloat[]{\includegraphics[width=6cm,height=6cm]{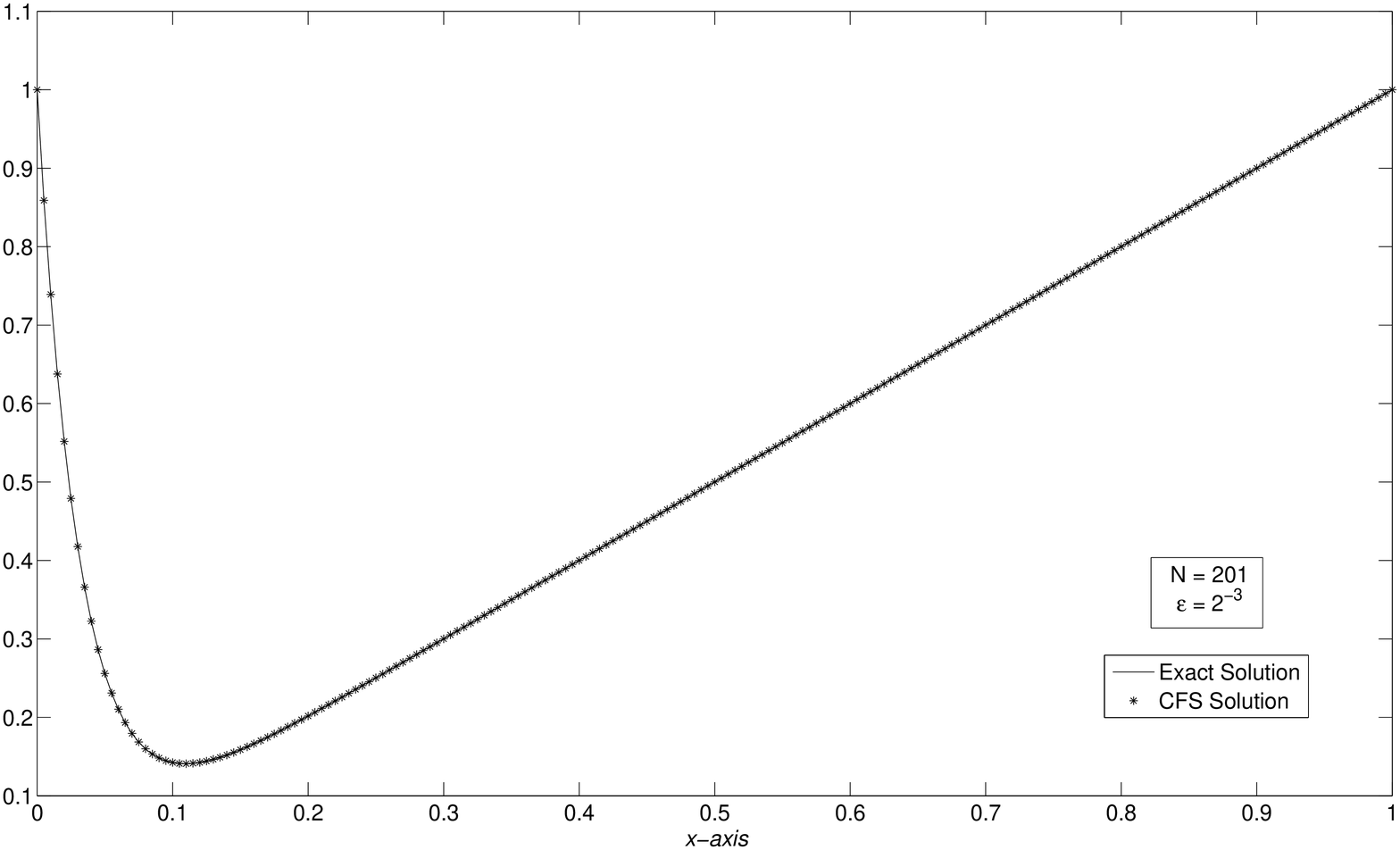}\label{6a}}
\subfloat[]{\includegraphics[width=6cm,height=6cm]{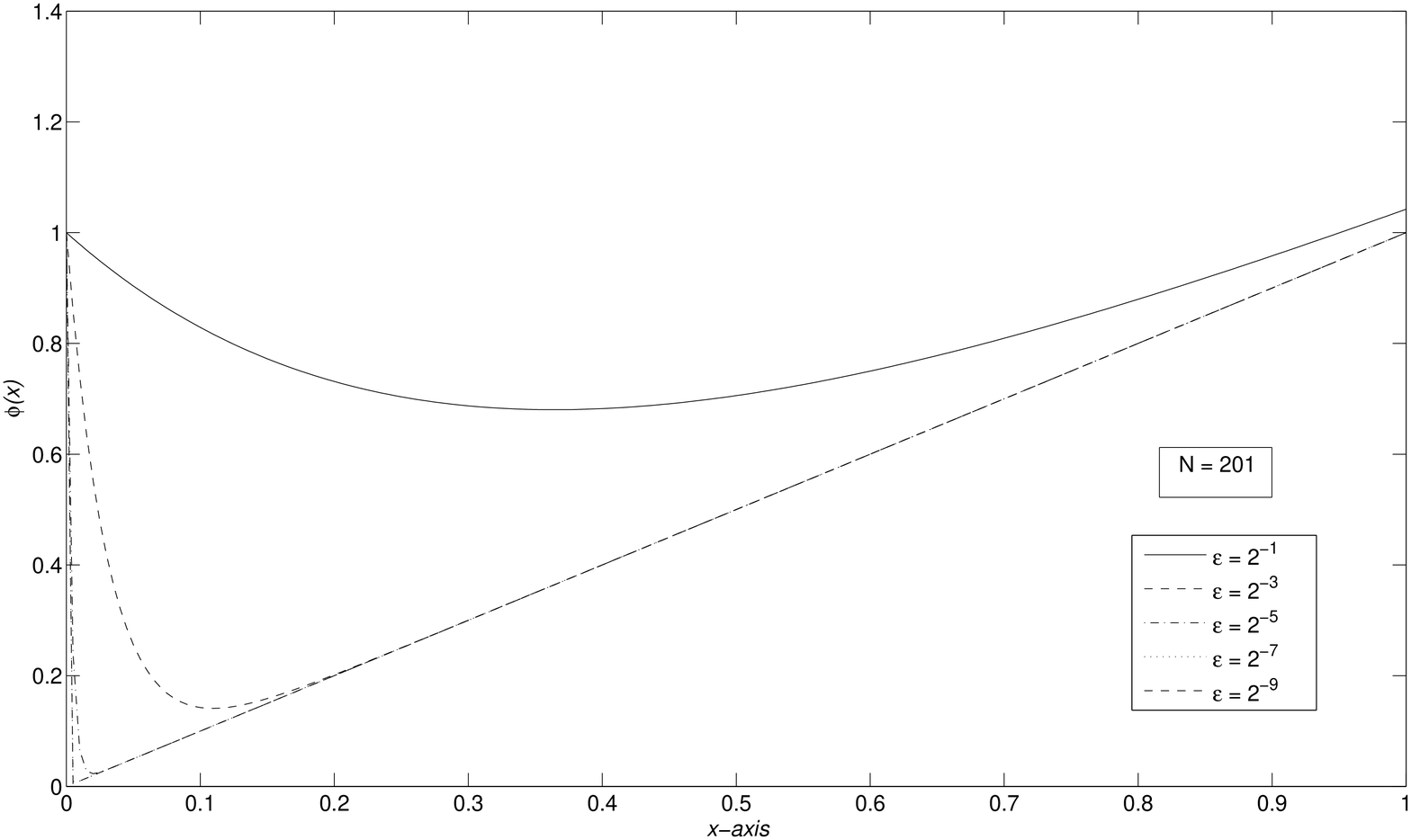}\label{6b}}\\
\subfloat[]{\includegraphics[width=6cm,height=6cm]{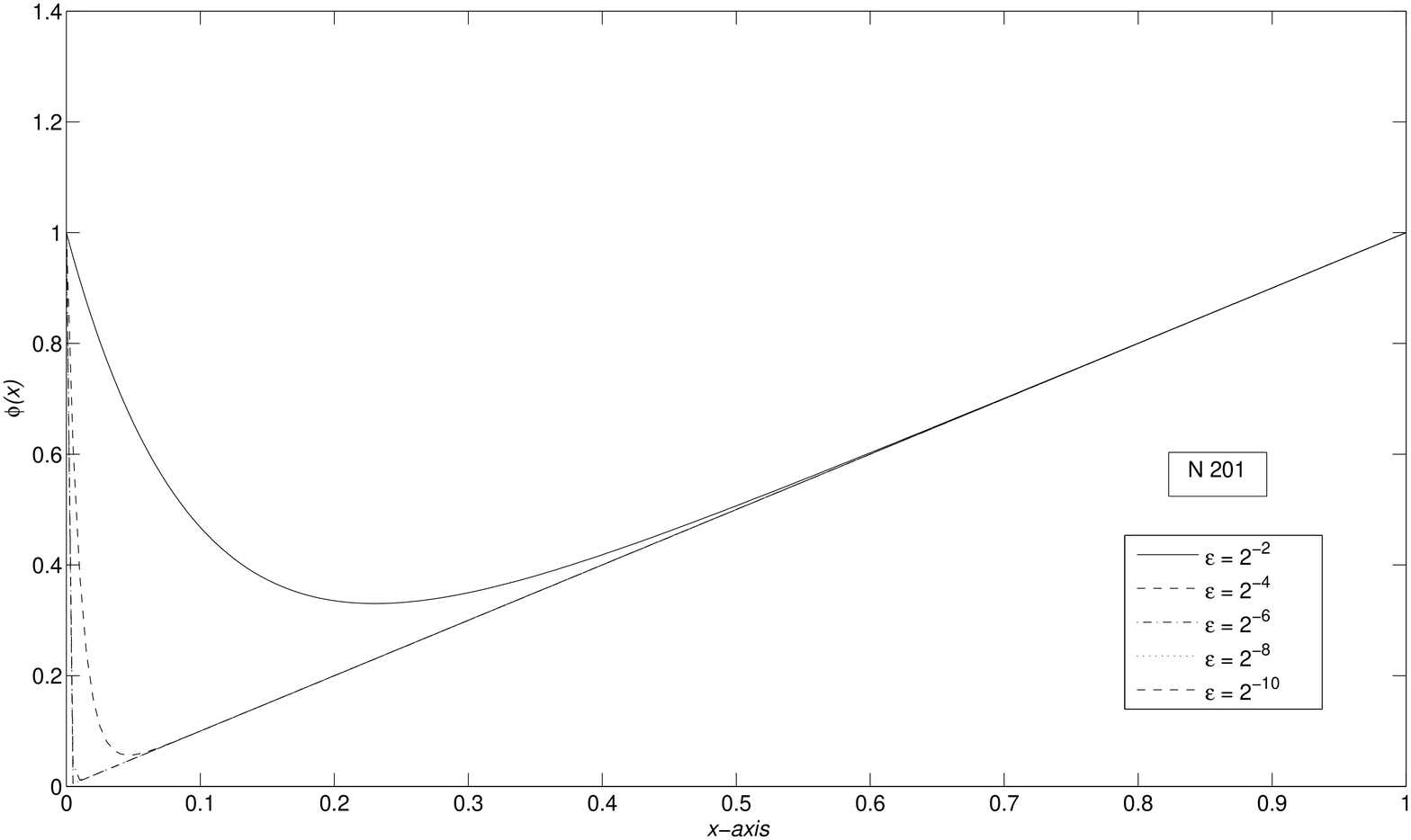}\label{6c}}
\subfloat[]{\includegraphics[width=6cm,height=6cm]{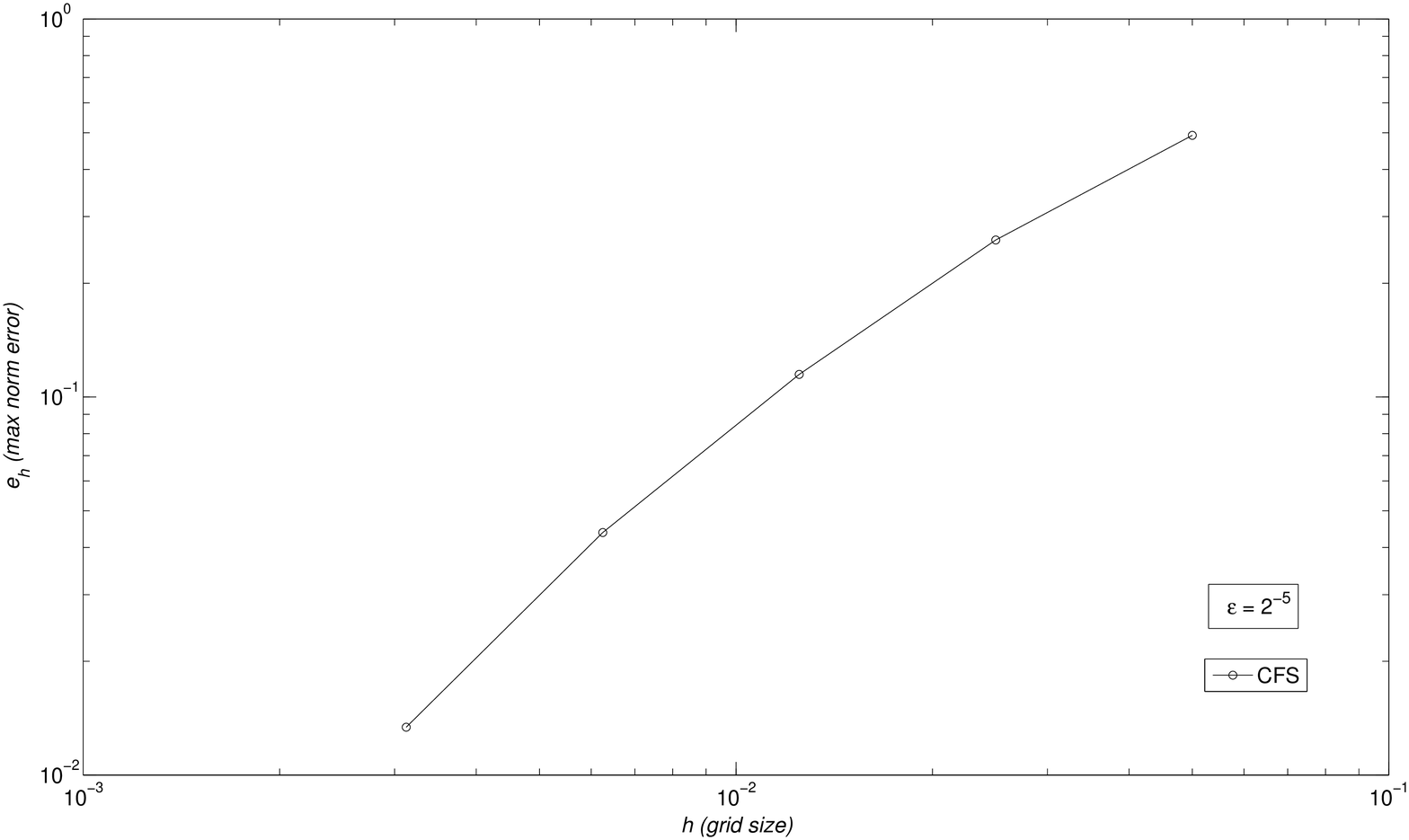}\label{6d}}
\caption{(a) Comparison of the exact $\&$ CFS solutions, (b-c) $\ep$-effects and (d) Log-log graph of the discretization error $e_h$ as a function of the grid size $h$.}
\label{fig:5.6}
\end{figure}

\begin{figure}
\subfloat[]{\includegraphics[width=6cm,height=6cm]{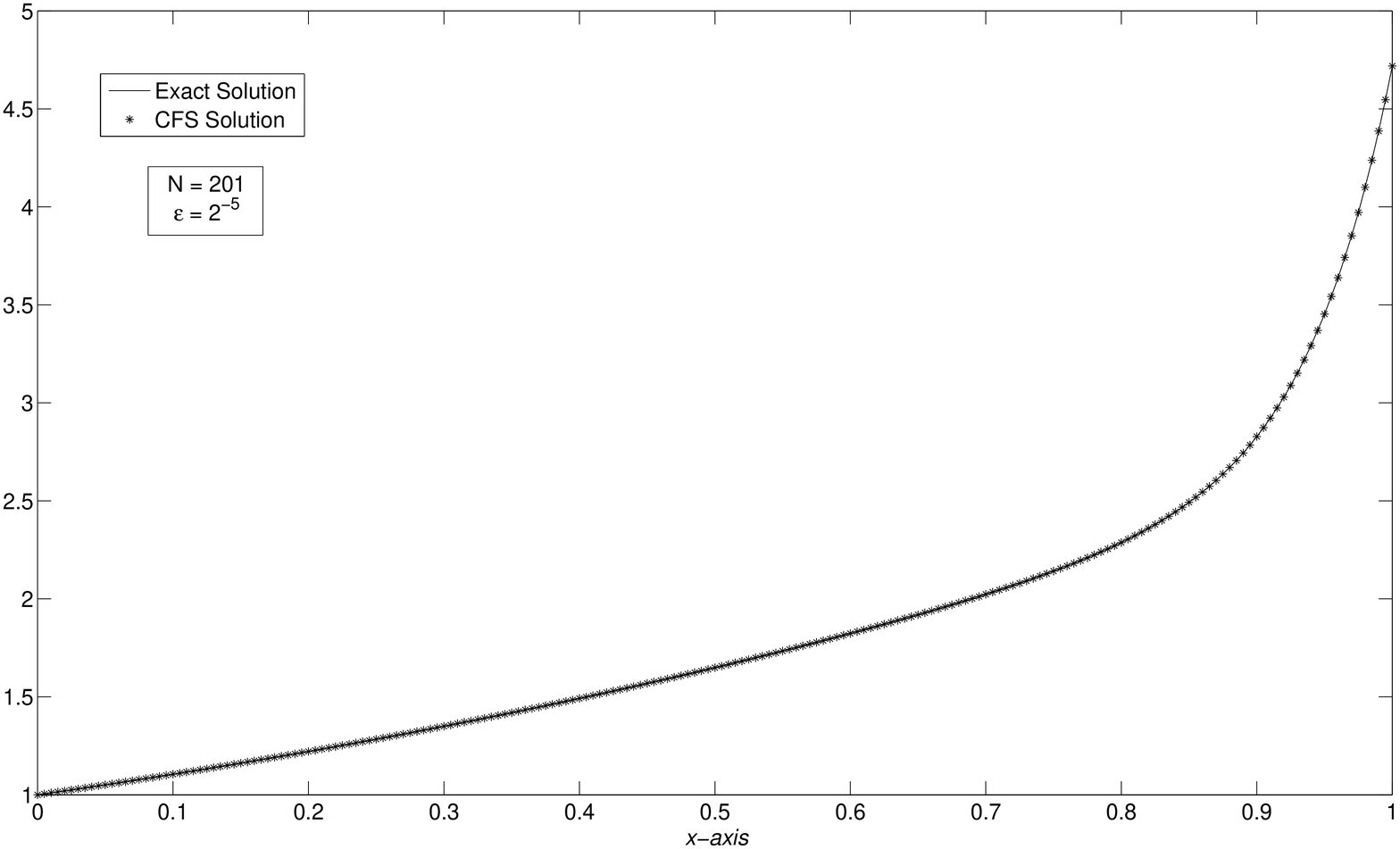}\label{7a}}
\subfloat[]{\includegraphics[width=6cm,height=6cm]{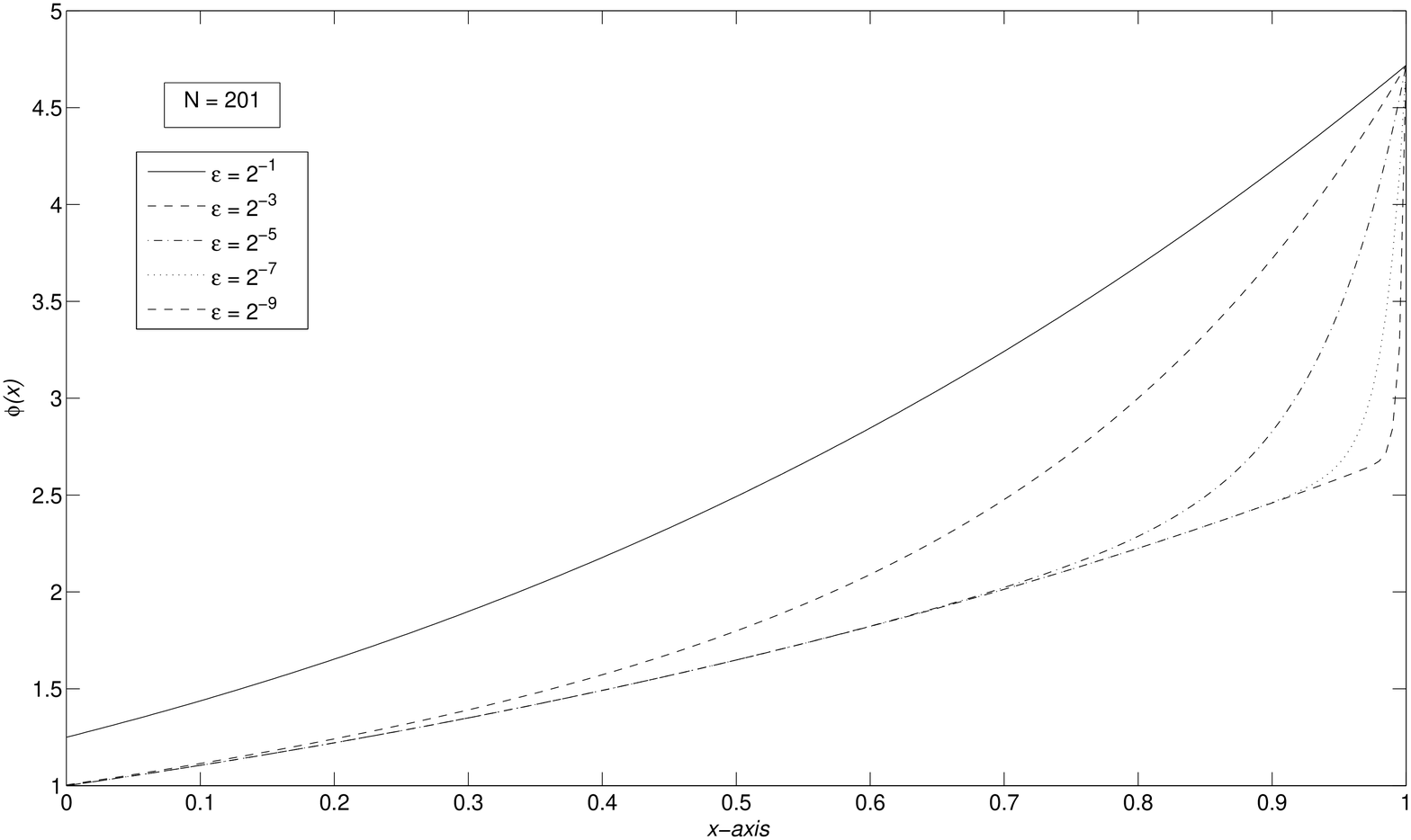}\label{7b}}\\
\subfloat[]{\includegraphics[width=6cm,height=6cm]{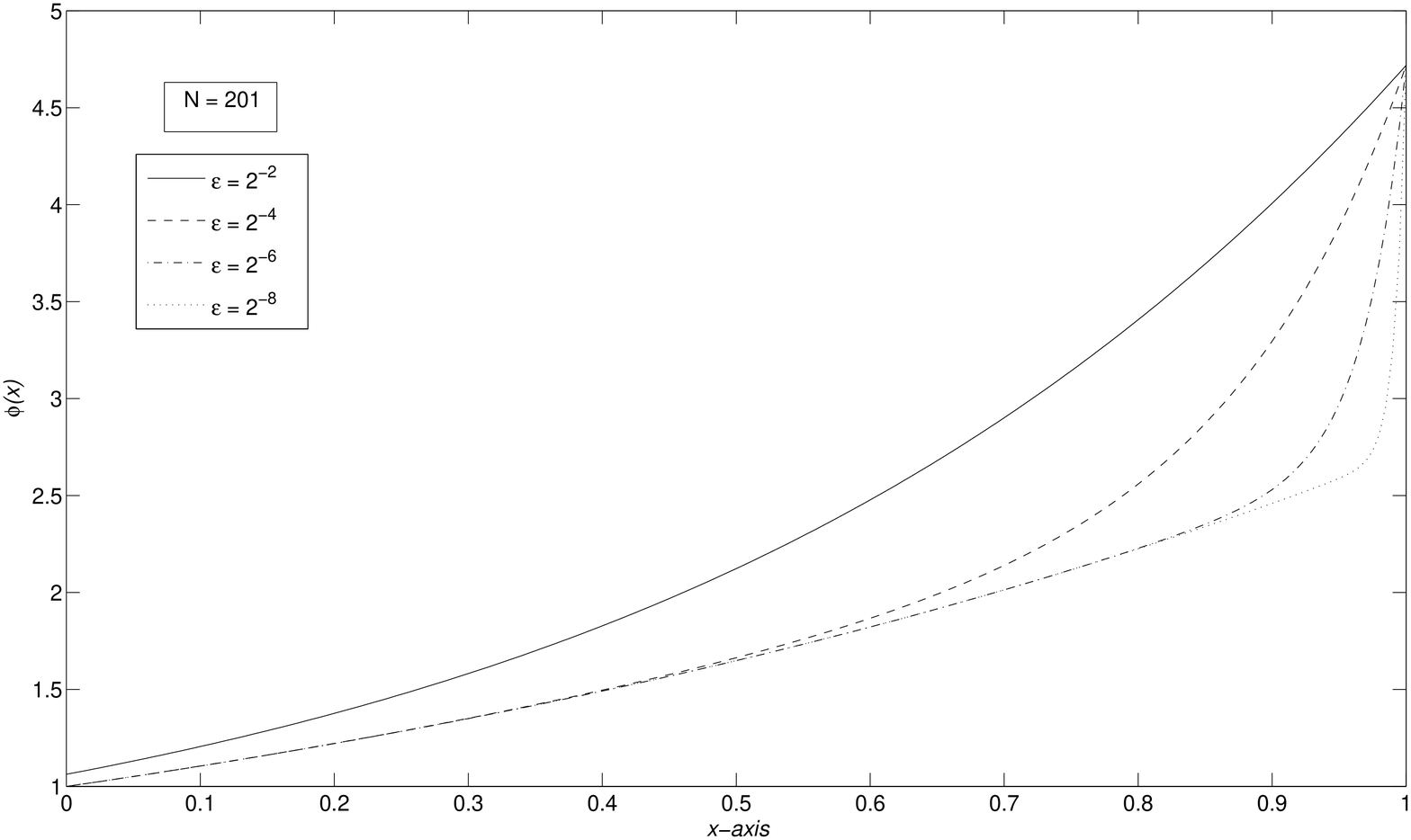}\label{7c}}
\subfloat[]{\includegraphics[width=6cm,height=6cm]{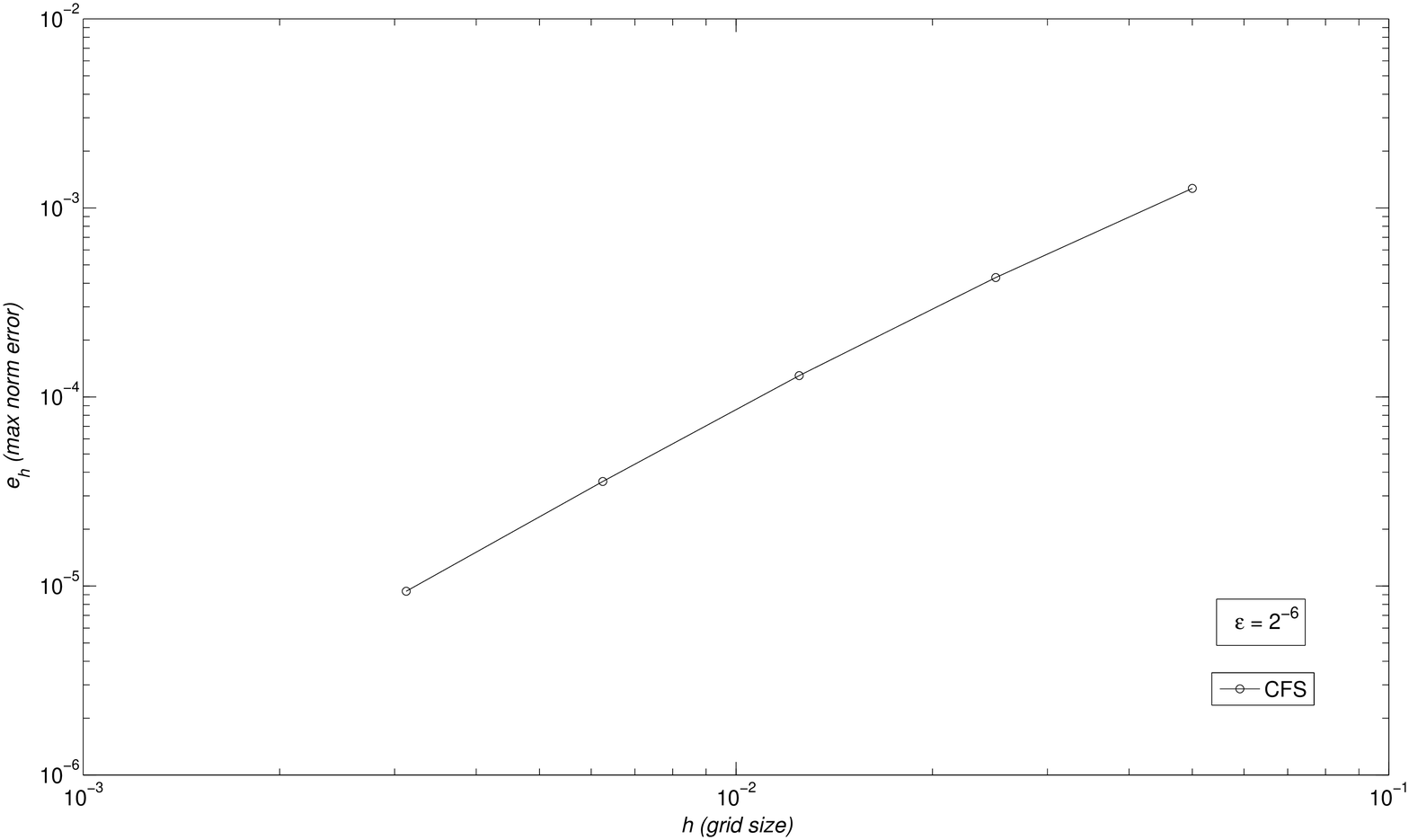}\label{7d}}
\caption{(a) Comparison of the exact $\&$ CFS solutions, (b-c) $\ep$-effects and (d) Log-log graph of the discretization error $e_h$ as a function of the grid size $h$.}
\label{fig:5.8}
\end{figure}

\begin{example} \label{exp:5.8}
Consider the following elliptic SPDDE with appropriate B.C.
\bse \label{eq:5.7.8}
\begin{align}
-\ep &\phi^{\prime\prime}(x) + b\phi^{\prime}(x-\mu) + c \phi(x)= s, ~ \fa x \in\Omega, \hspace{2cm} \label{eq:5.7.8a} \\
&\phi(0) = ~ \phi_L,~~\phi(1)= \phi_R,  \label{eq:5.7.8b}
\end{align}
\ese
where $0 < \ep \ll 1$, $\mu =0$, $b = \frac{1}{x+1}$, $c = \frac{1}{x+2}$, $\phi_L = 1 + 2^{(-1/\ep)}$, $\phi_R = \exp(1) + 2$, $\Omega = (0,1)$ and source term $s$ is so chosen to satisfy the exact solution given by
\begin{align*}
\phi(x) = \exp(x) + 2^{(-1/\ep)} (x+1)^{(1+1/\ep)}.
\end{align*}
\end{example}

Here, the solution has a thin boundary layer of width $\ep$ near the boundary $x=1$. The comparison of the exact and numerical solutions is as shown in the figure \ref{fig:5.8}(a). Let $h = \Delta x = 1/(N-1)$ be the grid (step) size with $N$ number of grid points. The number of grid points $N=201$ are adequate to capture the boundary layer accurately, in this example. The effects of different values of singular perturbation parameter $\ep$ are shown in the figures \ref{fig:5.8}(b)-\ref{fig:5.8}(c) and from these figures, one can easily notice that as the singular perturbation parameter $\ep$ goes smaller and smaller, the boundary layers become sharper and sharper. The discretization error $e_h$ is calculated in the max norm for different grid sizes $h = 0.05, 0.025, 0.0125, 0.00625, 0.003125$. Log-log graph for the discretization error $e_h$, showing convergence, is shown in the figure \ref{fig:5.8}(d).

\section{Conclusion}
\label{sec:5.8}
A complete flux scheme (CFS) for elliptic singularly perturbed differential-difference equations (SPDDEs) has been proposed. Using the source function based element wise inhomogeneous BVPs, fluxes are obtained with distinct two components arising from the homogeneous and particular solutions of the BVPs. The inhomogeneous fluxes written in term of Green's function together with an apt choice of quadrature rules facilitate the derivation of numerical fluxes. The resulting CFS is shown to be stable, consistent and second order convergent. Moreover, the convergence is $\ep$ and $\mu$ uniform. The CFS thus obtained is easy to implement and has direct extension to multi-dimensions.

%
%
%

\section*{References}


\end{document}